\newcommand{\s}{\mathbb{S}}
\newcommand{\CC}{\mathbb{C}}
\newcommand{\NN}{\mathbb{N}}
\newcommand{\PP}{\mathbb{P}}
\newcommand{\GG}{\mathbb{G}}
\newcommand{\Aa}{\mathcal{A}}
\newcommand{\Bb}{\mathcal{B}}
\newcommand{\Cc}{\mathcal{C}}
\newcommand{\Ee}{\mathcal{E}}
\newcommand{\cg}{\mathfrak{g}}
\newcommand{\Dd}{\mathcal{D}}
\newcommand{\Ff}{\mathcal{F}}
\newcommand{\Hh}{\mathcal{H}}
\newcommand{\Kk}{\mathcal{K}}
\newcommand{\Nn}{\mathcal{N}}
\newcommand{\Tt}{\mathcal{T}}
\newcommand{\Pp}{\mathcal{P}}
\newcommand{\Oo}{\mathcal{O}}
\newcommand{\Uu}{\mathcal{U}}
\newcommand{\MM}{\mathcal{M}}
\newcommand{\Qq}{\mathcal{Q}}
\newcommand{\Xx}{\mathcal{X}}
\newcommand{\R}{\mathcal{R}}
\newcommand{\Ss}{\mathcal{S}}
\newcommand{\V}{\mathcal{V}}
\newcommand{\ZZ}{\mathbb{Z}}
\newcommand{\LL}{\mathscr{L}}
\newcommand{\TTt}{\sf{T}}
\newcommand{\SSs}{\sf{S}}
\newcommand{\E}{\sf{E}}
\newcommand{\F}{\sf{F}}
\newcommand{\Hhh}{\sf{H}}
\newcommand{\spi}{\sf{\Psi}}
\newcommand{\SOD}{\sf{SOD}}
\newcommand{\SL}{\mathfrak{sl}}
\newcommand{\kk}{\underline{k}}
\newcommand{\Ll}{\underline{l}}
\newcommand{\bo}{\boldsymbol{1}}
\newcommand{\bS}{\boldsymbol{S}}
\newcommand{\bT}{\boldsymbol{T}}
\newcommand{\brmU}{\boldsymbol{\Rm{U}}}
\newcommand{\brmT}{\boldsymbol{\Rm{T}}}
\newcommand{\brmX}{\boldsymbol{\Rm{X}}}
\newcommand{\tdim}{\mathrm{dim}}
\newcommand{\Hom}{\mathrm{Hom}}
\newcommand{\End}{\mathrm{End}}
\newcommand{\tdet}{\mathrm{det}}
\newcommand{\Gr}{\mathrm{Gr}}
\newcommand{\Fl}{\mathrm{Fl}}
\newcommand{\Coh}{\mathrm{Coh}}
\newcommand{\Rm}{\mathrm}
\newcommand{\GLL}{\mathrm{GL}}
\newcommand{\SLL}{\mathrm{SL}}
\newcommand{\blam} {\boldsymbol{\lambda}}
\newcommand{\bmu} {\boldsymbol{\mu}}
\newtheorem{theorem}{Theorem}[section]
\newtheorem{thmx}{Theorem}
\newtheorem{lemma}[theorem]{Lemma}
\newtheorem{proposition}[theorem]{Proposition}
\newtheorem{corollary}[theorem]{Corollary}
\newtheorem{cory}{Corollary}
\theoremstyle{definition}
\newtheorem{definition}[theorem]{Definition}
\newtheorem{example}[theorem]{Example}
\theoremstyle{remark}
\newtheorem{remark}[theorem]{Remark}
\numberwithin{equation}{section}
\title[Categorical idempotents via shifted 0-affine algebras]{Categorical idempotents via shifted 0-affine algebras}
\begin{document}
	
\emergencystretch 3em

\begin{abstract}
We show that a categorical action of shifted 0-affine algebra naturally gives two families of pairs of complementary idempotents in the triangulated monoidal category of triangulated endofunctors for each weight category. Consequently, we obtain two families of pairs of complementary idempotents in the triangulated monoidal category $\Dd^b\Coh(G/P \times G/P)$.

As an application, this provides examples where the projection functors of a semiorthogonal decomposition are kernel functors, and we determine the generators of the component categories in the Grassmannians case.
\end{abstract}
	
\address{Academia Sinica} \email{youhunghsu@gate.sinica.edu.tw}

\author[You-Hung Hsu]{You-Hung Hsu}
	
\keywords{Derived category, categorification, Demazure operators, categorical idempotents}
	
\makeatletter
\@namedef{subjclassname@2020}{%
	\textup{2020} Mathematics Subject Classification}
\makeatother
	
\subjclass[2020]{Primary 14M15, 18N25, 20C08 : Secondary 18F30, 18G80 }
	
\maketitle

\setcounter{tocdepth}{1}






\section{Introduction}


\subsection{Complementary idempotents} \label{1.1}
Let $A$ be a $\CC$-algebra with unit $1_{A}$. An element $a \in A$ is called an idempotent if $a^2=a$. An idempotent $a \in A$ naturally induces another idempotent $1_{A}-a$, called its complementary idempotent. In particular, they are mutually orthogonal and give a decomposition of $1_{A}$.

Hogancamp \cite{Ho} develops a categorical analog of the above theory of idempotent in the following sense: Given objects $\bT$, $\bS$ in a triangulated monoidal category $\Aa$, we call $(\bT,\bS)$ a \textit{pair of complementary idempotents} if they satisfy mutual orthogonality and give a decomposition of the monoidal identity $\bo_{\Aa}$, see Definition \ref{compleidem}.

In particular, if $\Aa$ acts on a triangulated category $\MM$ by triangulated endofunctors (categorical analog of $A$-modules), then the pair of complementary idempotents $(\bT,\bS)$ gives a short semiorthogonal decomposition $\MM=\langle \text{Im}\bS,\text{Im}\bT \rangle$, where $\text{Im}{\F}$ denotes the minimal full triangulated subcategory generated by the class of objects that are essential images of a functor ${\F}$.

\subsection{Categorical actions}

Given a complex semisimple or Kac-Moody Lie algebra $\cg$. There has been much progress in the categorification of $\cg$ (or the quantum group $\brmU_{q}(\cg)$) and their representations over the past decade. The notion of lifting their representations from vector spaces to categories is called the \textit{categorical actions}.  

For example, when $\cg=\SL_2$, a categorical $\SL_2$-action consists of weight categories $\Cc(\lambda)$ and functors ${\E}:\Cc(\lambda)\rightarrow \Cc(\lambda+2)$, ${\F}:\Cc(\lambda)\rightarrow \Cc(\lambda-2)$ such that 
\begin{equation} \label{eq} 
	{\E\F}|_{\Cc(\lambda)} \cong {\F\E}|_{\Cc(\lambda)} \bigoplus \text{Id}_{\Cc(\lambda)}^{\oplus \lambda} \ \text{if} \ \lambda \geq 0,
\end{equation} a similar equivalence exists for $\lambda \leq 0$.

Note that we do not impose any natural transforms in (\ref{eq}). Understanding the natural transformations between the functors in (\ref{eq}) is important in higher representation theory. Chuang-Rouquier \cite{CR} gives one answer to such a problem for $\SL_{2}$ and later generalized to (simply-laced) Kac-Moody algebras $\cg$ by Khovanov-Lauda \cite{KL1} and Rouquier \cite{R}.

\subsection{Main result}



In \cite{Hsu}, the author defines an algebra called the shifted 0-affine algebra, denoted by $\Uu=\dot{\brmU}_{0,N}(L\SL_{n})$, and defines the notion of  \textit{categorical $\Uu$-action} on certain triangulated 2-categories, see Definition \ref{shited0affine} and Definition \ref{catshifted0}. It is proved that there is a categorical $\Uu$-action on the bounded derived categories of coherent sheaves on $n$-step partial flag varieties, see Proposition \ref{Thm catact}.

In this article, we use categorical $\Uu$-action to construct pairs of complementary idempotents in the triangulated monoidal category of triangulated endofunctors of each weight category. We first state the main result of this article.

\begin{thmx}[Theorem \ref{Theorem 5}, Corollary \ref{mainthmgeo}] \label{mainresult}
Given a categorical $\Uu$-action on $\Kk$. Then for $1 \leq i \leq n-1$, there are pairs of complementary idempotents $({\TTt}'_{i}\bo_{\kk},  {\SSs}'_{i}\bo_{\kk})$ in the triangulated monoidal category $\Hom(\Kk(\kk),\Kk(\kk))$. Moreover, relations between those functors are given.

In particular, when $\Kk(\kk)=\Dd^b\Coh(\Fl_{\kk}(\CC^N))$ in which we prove there is a categorical $\Uu$-action in \cite{Hsu}, we obtain pairs of complementary idempotents that given by Fourier-Mukai kernels $({\Tt}'_{i}\bo_{\kk}, \ {\Ss}'_{i}\bo_{\kk})$ in the triangulated monoidal category $\Dd^b\Coh(\Fl_{\kk}(\CC^N) \times \Fl_{\kk}(\CC^N))$ for $1 \leq i \leq n-1$, where the convolution of kernels gives the monoidal structure.
\end{thmx}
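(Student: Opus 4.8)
The plan is to prove the abstract statement, Theorem \ref{Theorem 5}, working entirely inside an arbitrary categorical $\Uu$-action on $\Kk$, and then to deduce the geometric statement, Corollary \ref{mainthmgeo}, by transporting everything along the categorical $\Uu$-action on partial flag varieties supplied by Proposition \ref{Thm catact}.

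For the abstract part, I would first fix a node $i$ and write down explicit models for the endofunctors ${\TTt}'_i\bo_\kk$ and ${\SSs}'_i\bo_\kk$ of $\Kk(\kk)$ as two-term complexes built from the generating $1$-morphisms ${\E}_i, {\F}_i$ and the canonical adjunction $2$-morphisms relating ${\E}_i{\F}_i\bo_\kk$, ${\F}_i{\E}_i\bo_\kk$ and $\bo_\kk$. Because $\Uu$ is the shifted \emph{$0$-affine} algebra, the $\SL_2$-type data carried by node $i$ (Definition \ref{catshifted0}) is a nil/Demazure degeneration, and I expect ${\TTt}'_i\bo_\kk$ to be a categorification of the idempotent Demazure operator attached to the weight $\kk$: concretely, the cone of a distinguished $2$-morphism --- a composite of a unit and a counit --- relating $\bo_\kk$ to the appropriate one of ${\E}_i{\F}_i\bo_\kk$ and ${\F}_i{\E}_i\bo_\kk$, with ${\SSs}'_i\bo_\kk$ its complement. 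With this choice there is, by construction, a distinguished triangle ${\SSs}'_i\bo_\kk \to \bo_\kk \to {\TTt}'_i\bo_\kk \to {\SSs}'_i\bo_\kk[1]$, which is the decomposition of $\bo_\kk$ required by Definition \ref{compleidem}; the remaining content of Theorem \ref{Theorem 5} is then idempotency and orthogonality.

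So it remains to establish (i) ${\TTt}'_i\bo_\kk \ast {\TTt}'_i\bo_\kk \cong {\TTt}'_i\bo_\kk$ and likewise for ${\SSs}'_i\bo_\kk$, and (ii) ${\TTt}'_i\bo_\kk \ast {\SSs}'_i\bo_\kk \cong 0 \cong {\SSs}'_i\bo_\kk \ast {\TTt}'_i\bo_\kk$. Convolving the defining two-term complexes, both reduce to the node-$i$ relations of the categorical $\Uu$-action: the categorical commutator isomorphism between ${\E}_i{\F}_i$ and ${\F}_i{\E}_i$, combined with the adjunction (triangle) identities, shows that the composite of unit and counit occurring in the convolution is an isomorphism, so that the "cross terms" cancel by explicit morphisms and homotopies of complexes --- this is the categorical shadow of the algebra identities $\pi_i^2 = \pi_i$ and $(1-\pi_i)\pi_i = 0$, which at the root come from the nilHecke relation $\partial_i^2 = 0$. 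I expect the main obstacle to be exactly this last point: one must pin down the precise $2$-morphism data at node $i$ (the nilHecke action, and the exact structure of the composites governing ${\E}_i^2$ and ${\F}_i^2$ at the weight $\kk$) and verify a short finite list of compatibilities, so that the abstract cancellation is realized by genuine maps of complexes rather than merely holding on the level of objects; the $0$-affine (nil) degeneration is what keeps these complexes finite and the verification bounded. The remaining relations between the functors --- commutation of ${\TTt}'_i\bo_\kk$ with ${\TTt}'_j\bo_\kk$ and with ${\SSs}'_j\bo_\kk$ when $|i-j| \geq 2$, and a braid-type relation when $|i-j| = 1$ --- follow by the same convolution method from the commutation and Serre-type relations of $\Uu$ recorded in Definition \ref{shited0affine}.

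For the geometric realization, Proposition \ref{Thm catact} provides a categorical $\Uu$-action on $\kk \mapsto \Dd^b\Coh(\Fl_\kk(\CC^N))$ in which ${\E}_i$ and ${\F}_i$ are Fourier-Mukai functors whose kernels are explicit (twisted) structure sheaves of the one-step correspondences lying over the Grassmannian-bundle contractions that forget the $i$-th flag. Since composition of Fourier-Mukai functors corresponds to convolution of kernels, and the cone of a map of kernels computes the cone of the induced map of functors, the two-term complexes defining ${\TTt}'_i\bo_\kk$ and ${\SSs}'_i\bo_\kk$ are represented by explicit objects ${\Tt}'_i\bo_\kk, {\Ss}'_i\bo_\kk \in \Dd^b\Coh(\Fl_\kk(\CC^N) \times \Fl_\kk(\CC^N))$, and all of the isomorphisms and relations proved abstractly transport, giving pairs of complementary idempotents in the triangulated monoidal category $(\Dd^b\Coh(\Fl_\kk(\CC^N) \times \Fl_\kk(\CC^N)), \ast)$. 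The only additional work here is bookkeeping the twists and shifts in the kernels and identifying ${\Tt}'_i\bo_\kk$ (respectively ${\Ss}'_i\bo_\kk$) with the projection kernel onto the pullback subcategory of the $i$-th contraction (respectively onto its semiorthogonal complement), which is the input for the application to semiorthogonal decompositions and, in the Grassmannian case, for extracting explicit generators of the component categories. The second family of pairs of idempotents mentioned in the abstract is obtained by the symmetric construction, exchanging the roles of ${\E}_i$ and ${\F}_i$ and of the units and counits.
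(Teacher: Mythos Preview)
Your proposal has a genuine conceptual gap in how ${\TTt}'_i\bo_\kk$ and ${\SSs}'_i\bo_\kk$ are built and why they are idempotent. In the paper neither functor is a cone or a two-term complex: both are \emph{explicit compositions}
\[
{\TTt}'_i\bo_\kk \;=\; {\E}_{i,0}{\F}_{i,k_{i+1}}({\spi}^+_i)^{-1}\bo_\kk \;\cong\; {\E}_{i,0}\bo_{\kk-\alpha_i}({\E}_{i,0}\bo_{\kk-\alpha_i})^{R},
\qquad
{\SSs}'_i\bo_\kk \;=\; {\F}_{i,k_{i+1}}{\E}_{i,0}({\spi}^+_i)^{-1}\bo_\kk[1],
\]
and the triangle ${\TTt}'_i\bo_\kk \to \bo_\kk \to {\SSs}'_i\bo_\kk$ (note the order is opposite to what you wrote) is not constructed from units and counits but is \emph{already an axiom}: it is exactly condition~(11)(a) of Definition~\ref{catshifted0} after composing with $({\spi}^+_i)^{-1}$. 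In particular, there is no complex to contract and no homotopy to produce; moreover the categorical action in Definition~\ref{catshifted0} deliberately does \emph{not} fix the $2$-morphism data you plan to manipulate (the isomorphisms there are only asserted to exist), so your ``explicit morphisms and homotopies of complexes'' strategy is not available in this framework.

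The mechanism behind idempotency and orthogonality is also different from the nilHecke identity $\partial_i^2=0$ that you invoke. What is actually used is the $q=0$--specific vanishing in conditions~(6)(a) and~(7)(a), namely ${\E}_{i,0}{\E}_{i,-1}\bo_\kk\cong 0$ and ${\F}_{i,k_{i+1}}{\F}_{i,k_{i+1}+1}\bo_\kk\cong 0$. For instance, ${\SSs}'_i{\TTt}'_i\bo_\kk$ contains the string ${\E}_{i,0}({\spi}^+_i)^{-1}{\E}_{i,0}\cong {\E}_{i,0}{\E}_{i,-1}({\spi}^+_i)^{-1}[1]\cong 0$ by~(8)(a) then~(6)(a); and $({\TTt}'_i)^2\cong {\TTt}'_i$ follows by rewriting with~(8)(a), applying the triangle~(11)(a) at weight $\kk-\alpha_i$, and killing the error term with~(6)(a). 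Finally, for $|i-j|=1$ you should not expect braid relations: they fail in general, and the paper instead proves the vanishing relations~(B) (e.g.\ ${\SSs}'_{i+1}{\SSs}'_i{\TTt}'_{i+1}{\TTt}'_i\bo_\kk\cong 0$, again via~(6)(a)) and the exact triangles~(C), which only collapse to braid relations in the full-flag case where ${\TTt}'_i={\TTt}''_i$.
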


Now we give a brief explanation of the main result. Denote by $\kk=(k_1,...,k_n) \vDash N$ a composition of $N \geq 2$ of $n$ parts. Each $\kk$ corresponds to a $n$-step partial flag variety below
\begin{equation} \label{eq nfl}
\Fl_{\kk}(\CC^N)\coloneqq \{V_{\bullet}=(0=V_{0} \subset V_1 \subset ... \subset V_{n}=\CC^N) \ | \ \tdim V_{i}/V_{i-1}=k_{i} \ \text{for} \ \text{all} \ i\}.	
\end{equation} 

We denote $\Dd^b\Coh(X)$ to be the bounded derived category of coherent sheaves on a variety $X$. The generators  $e_{i,r}1_{\kk}$ and $f_{i,s}1_{\kk}$ with $-k_i-1 \leq r \leq 0$ and $0 \leq s \leq k_{i+1}+1$ of $\Uu$ act on $\bigoplus_{\kk' \vDash N} \Dd^b\Coh(\Fl_{\kk'}(\CC^N))$ by the following correspondence diagram
\begin{equation*}
\xymatrix{ 
&&W^{1}_{i}(\kk)
\ar[ld]_{p_1} \ar[rd]^{p_2}   \\
& \Fl_{\kk}(\CC^N) && \Fl_{\kk+\alpha_{i}}(\CC^N)
}
\end{equation*}  where 
$W^{1}_{i}(\kk) \coloneqq \{(V_{\bullet},V_{\bullet}') \in  \Fl_{\kk}(\CC^N) \times \Fl_{\kk+\alpha_i}(\CC^N) \ | \ V_{j}=V'_{j} \ \Rm{for} \ j \neq i,  \ V'_{i} \subset V_{i}\}$. Here $\alpha_{i}=(0...0,-1,1,0...0)$ is the simple root with the $-1$ is in the $i$th position, and $p_{1}$, $p_{2}$ are the natural projections.  

The generators $e_{i,r}1_{\kk}$ act on $\bigoplus_{\kk' \vDash N} \Dd^b\Coh(\Fl_{\kk'}(\CC^N))$ by lifting to the following functors
\begin{equation*}
{\E}_{i,r}\bo_{\kk} \coloneqq p_{2*}(p_{1}^{*} \otimes (\V_{i}/\V'_{i})^{r}):\Dd^b\Coh(\Fl_{\kk}(\CC^N)) \rightarrow \Dd^b\Coh(\Fl_{\kk+\alpha_{i}}(\CC^N))
\end{equation*} where we denote $\V_{i}$, $\V'_{i}$ to be the tautological bundles on $W^{1}_{i}(\kk)$ of rank $k_{1}+..+k_{i}$, $k_{1}+..+k_{i}-1$ respectively, and thus we form the quotient line bundle $\V_{i}/\V'_{i}$. Similarly for the lift of $f_{i,s}1_{\kk}$ to ${\F}_{i,s}\bo_{\kk}$ in the opposite direction. Furthermore, ${\E}_{i,r}\bo{\kk}$ and ${\F}_{i,s}\bo{\kk}$ are Fourier-Mukai transforms with kernels denoted by $\Ee_{i,r}\bo_{\kk}$ and $\Ff_{i,s}\bo_{\kk}$ respectively, see Proposition \ref{Thm catact} for details.

The idea comes from the observation that the categorical Demazure operators can be expressed as generators in the categorical $\dot{\Uu}_{0,N}(L\SL_{N})$-action and generalized to the $\dot{\Uu}_{0,N}(L\SL_{n})$-action with $n<N$. In more detail, note that we have the full flag variety $\Fl_{(1,1,...,1)}(\CC^N)=G/B$ and the partial flag varieties are $G/P_{i}=\Fl_{(1,1,...,1) + \alpha_{i}}(\CC^N)=\Fl_{(1,1,...,1) - \alpha_{i}}(\CC^N)$ where $G=\SLL_N$, $B \subset G$ is the standard Borel and $P_{i} \subset G$ is the minimal parabolic subgroup corresponds to the simple root $\alpha_i$.

There are natural projection $\pi_{i}:G/B \rightarrow G/P_i$ that induces derived pullback and derived pushforward $ \Dd^b\Coh(G/B) \overset{\pi_{i*}}{\underset{\pi^*_i}\rightleftarrows} \Dd^b\Coh(G/P_i)$. Then the categorical Demazure operators are defined by $\brmT_i \coloneqq \pi^*_{i}\pi_{i*}$. A standard calculation tells us that $\brmT_{i}$ corresponds to a Fourier-Mukai transform, with its kernel given by $\Tt_i=\Oo_{G/B \times_{G/P_i} G/B} \in \Dd^b\Coh(G/B \times G/B)$, which is the structure sheaf of the Bott-Samelson variety $G/B \times_{G/P_i} G/B$ for all $i$. From \cite[Section 6]{Hsu}, we have the following isomorphisms of Fourier-Mukai kernels
\begin{equation}
\Tt_{i} \cong (\Ee_{i,0} \ast \Ff_{i,1} \ast (\Psi^{+}_{i})^{-1})\bo_{(1,1,...,1)} \cong (\Ff_{i,0} \ast \Ee_{i,-1} \ast (\Psi^{-}_{i})^{-1})\bo_{(1,1,...,1)},  \label{dem} 
\end{equation} where $\Psi_{i}^{+}$, $\Psi_{i}^{-}$ are certain Fourier-Mukai kernels defined in Proposition \ref{Thm catact} for all $1 \leq i \leq N-1$.

Motivating from the idempotent property of Demazure operators, we define ${\TTt}'_{i}\bo_{(1,1,...,1)}:={\E}_{i,0}{\F}_{i,1}({\spi}^{+}_{i})^{-1}\bo_{(1,1,...,1)}$ for an abstract categorical $\dot{\brmU}_{0,N}(L\SL_N)$-action $\Kk$. Then it turns out that they satisfy $({\TTt}'_{i})^2\bo_{(1,1,...,1)} \cong {\TTt}'_{i}\bo_{(1,1,...,1)}$ (see (\ref{eq 9}) for details). This implies that ${\TTt}'_{i}\bo_{(1,1,...,1)}$ is a weak idempotent (see Definition \ref{weakidem}).

To generalize this from categorical $\dot{\brmU}_{0,N}(L\SL_N)$-action to categorical $\dot{\brmU}_{0,N}(L\SL_n)$-action with $n<N$, the key observation is to expressing ${\TTt}'_{i}\bo_{(1,1,...,1)}$ as natural compositions of functors with their left and right adjoints. More precisely, by condition (4) in Definition \ref{catshifted0}, we have 
\begin{equation*}
{\TTt}'_{i}\bo_{(1,1,...,1)} \cong {\E}_{i,0}\bo_{(1,1,...,1)-\alpha_{i}}({\E}_{i,0}\bo_{(1,1,...,1)-\alpha_{i}})^{R}
\end{equation*} then we define ${\TTt}'_{i}\bo_{\kk}$ by the same way, i.e., ${\TTt}'_{i}\bo_{\kk} \coloneqq {\E}_{i,0}\bo_{\kk-\alpha_{i}}({\E}_{i,0}\bo_{\kk-\alpha_{i}})^{R}$, similarly for its complementary ${\SSs}_{i}'\bo_{\kk}$, see (\ref{Tk'}) and (\ref{Sk'}) for details.



\subsection{Application to semiorthogonal decomposition}

Observe that for each $\kk \vDash N$, we have an action of $\Hom(\Kk(\kk),\Kk(\kk))$ on the weight category $\Kk(\kk)$. From Subsection \ref{1.1}, a pair of complementary idempotents gives a short semiorthogonal decomposition; as a consequence of Theorem \ref{mainresult}, we have the following $\Kk(\kk) = \langle \text{Im}{\SSs}'_{i}\bo_{\kk}, \ \text{Im}{\TTt}'_{i}\bo_{\kk} \rangle$.

In particular, when the weight categories are $\Kk(\kk)=\Dd^b\Coh(\Fl_{\kk}(\CC^N))$, the triangulated monoidal category $\Dd^b\Coh(\Fl_{\kk}(\CC^N) \times \Fl_{\kk}(\CC^N))$ acts on $\Dd^b\Coh(\Fl_{\kk}(\CC^N))$ by Fourier-Mukai transforms. Thus we obtain 
\begin{equation} \label{shortsod} 
    \Dd^b\Coh(\Fl_{\kk}(\CC^N)) = \langle \text{Im}\Phi_{{\Ss}'_{i}\bo_{\kk}}, \text{Im}\Phi_{{\Tt}'_{i}\bo_{\kk}} \rangle
\end{equation} for $1 \leq i \leq n-1$. This provides concrete examples where the projection functors of a semiorthogonal decomposition are given by Fourier-Mukai kernels; see Proposition \ref{sodfm}.


It is natural to ask what explicitly are the component categories in (\ref{shortsod}).  We answer this question in the case where $n=2$, i.e., partial flag varieties are the Grassmannians $\Gr(k,N)$. Let $\V$ be the tautological rank $k$ bundle on $\Gr(k,N)$. Then there is a full exceptional collection given by Kapranov \cite{Ka1} 
\begin{equation} \label{kape}
  \Dd^b\Coh(\Gr(k,N))=\langle \s_{\blam}\V \rangle_{\blam \in P(N-k,k)},   
\end{equation} where we denote by $P(N-k,k)$ the set of Young diagrams with at most $k$ rows and at most $N-k$ columns, and $\s_{\blam}$ is the associated Schur functor for each $\blam \in P(N-k,k)$.

Since $n=2$, we drop the subscript $i$ for the generators in $\dot{\brmU}_{0,N}(L\SL_2)$. Then we have the complementary idempotents $(\Tt'\bo_{(k,N-k)},\Ss'\bo_{(k,N-k)})$ in $\Dd^b\Coh(\Gr(k,N) \times \Gr(k,N))$, and we calculate the action of the Fourier-Mukai transform $\Phi_{\Tt'\bo_{(k,N-k)}}$ on the exceptional objects in (\ref{kape}).

\begin{thmx} [Theorem \ref{actofidem}] \label{act}
For $\blam=(\lambda_{1},...,\lambda_{k}) \in P(N-k,k)$, we have 
\begin{equation*}
\Phi_{{\Tt}'\bo_{(k,N-k)}}(\s_{\blam}\V)=\begin{cases}
	0 & \text{if} \ \lambda_{1}=N-k \\
	\s_{\blam}\V & \text{if} \ 0 \leq \lambda_{1} \leq N-k-1.
\end{cases} 
\end{equation*} 
\end{thmx}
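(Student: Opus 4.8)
The plan is to convert $\Phi_{{\Tt}'\bo_{(k,N-k)}}$ into a genuine composition of derived pull--push functors and then evaluate it on the Kapranov exceptional objects by Borel--Weil--Bott.

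\textbf{Geometric form of the idempotent.} Specializing part~(4) of Definition~\ref{catshifted0} to $n=2$ and to $\kk=(k,N-k)$ (so $\kk-\alpha=(k+1,N-k-1)$), and using Proposition~\ref{Thm catact} to pass to the geometric model, one identifies $\Phi_{{\Tt}'\bo_{(k,N-k)}}$, as an endofunctor of $\Dd^b\Coh(\Gr(k,N))$, with $\E\circ\E^{R}$, where
\[
\E\coloneqq{\E}_{0}\bo_{(k+1,N-k-1)}=Rp_{2\ast}\circ Lp_{1}^{\ast}\colon\Dd^b\Coh(\Gr(k+1,N))\longrightarrow\Dd^b\Coh(\Gr(k,N))
\]
is attached to the correspondence $Z\coloneqq W^{1}_{1}((k+1,N-k-1))$, the two-step flag variety of pairs $V'\subset V$ with $\tdim V=k+1$, $\tdim V'=k$. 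Here $p_{1}\colon Z\to\Gr(k+1,N)$ is a $\PP^{k}$-bundle and $p_{2}\colon Z\to\Gr(k,N)$ is a $\PP^{N-k-1}$-bundle; write $\mathcal{A},\mathcal{B},\mathcal{L}$ for the tautological bundles on $Z$ of ranks $k,k+1,1$ in $0\to\mathcal{A}\to\mathcal{B}\to\mathcal{L}\to 0$, so $\mathcal{A}=p_{2}^{\ast}\V$ and $\mathcal{B}=p_{1}^{\ast}\V_{k+1}$ for $\V,\V_{k+1}$ the tautological bundles on the two Grassmannians. Then $\E^{R}=Rp_{1\ast}\bigl(\w_{p_{2}}[N-k-1]\otimes Lp_{2}^{\ast}(-)\bigr)$ with $\w_{p_{2}}\cong\mathcal{L}^{\otimes(N-k)}\otimes p_{2}^{\ast}\tdet\V$. (A routine preliminary is to confirm that Definition~\ref{catshifted0}(4) really yields this presentation of $\Phi_{{\Tt}'\bo_{(k,N-k)}}$ as $\E\circ\E^{R}$.)

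\textbf{Applying $\E^{R}$.} Since $p_{2}^{\ast}(\s_{\blam}\V)\otimes\tdet\mathcal{A}=\s_{\blam+(1^{k})}(\mathcal{A})$, where $\blam+(1^{k})\coloneqq(\lambda_{1}+1,\dots,\lambda_{k}+1)$, one has $\E^{R}(\s_{\blam}\V)\cong Rp_{1\ast}\bigl(\s_{\blam+(1^{k})}(\mathcal{A})\otimes\mathcal{L}^{\otimes(N-k)}\bigr)[N-k-1]$. On each fibre of the $\PP^{k}$-bundle $p_{1}$ one has $\mathcal{A}\cong\Omega_{\PP^{k}}(1)$ and $\mathcal{L}\cong\Oo(1)$, so $Rp_{1\ast}$ is computed by relative Borel--Weil--Bott: after adding $\rho$ the relevant $GL_{k+1}$-weight is $(N;\lambda_{1}+k,\lambda_{2}+k-1,\dots,\lambda_{k}+1)$, whose last $k$ entries are automatically strictly decreasing and bounded by $\lambda_1+k\le N$. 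When $\lambda_{1}=N-k$ the first two coordinates coincide, the weight is singular, $\E^{R}(\s_{\blam}\V)\cong 0$, and therefore $\Phi_{{\Tt}'\bo_{(k,N-k)}}(\s_{\blam}\V)\cong 0$. When $0\le\lambda_{1}\le N-k-1$ the weight is regular and already dominant, so $Rp_{1\ast}$ sits in one cohomological degree and
\[
\E^{R}(\s_{\blam}\V)\cong\s_{\bmu}(\V_{k+1})[N-k-1],\qquad\bmu\coloneqq(N-k,\lambda_{1}+1,\dots,\lambda_{k}+1).
\]

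\textbf{Applying $\E$ in the non-vanishing case.} It remains to show $\E(\s_{\bmu}(\V_{k+1}))[N-k-1]\cong\s_{\blam}\V$, equivalently $Rp_{2\ast}(\s_{\bmu}(\mathcal{B}))\cong\s_{\blam}\V[-(N-k-1)]$. As $\mathcal{B}$ is an extension of $\mathcal{L}$ by $\mathcal{A}$, the bundle $\s_{\bmu}(\mathcal{B})$ carries a finite filtration whose associated graded is $\bigoplus_{\beta}\s_{\beta}(\mathcal{A})\otimes\mathcal{L}^{\otimes(|\bmu|-|\beta|)}$, the sum over partitions $\beta$ with $\ell(\beta)\le k$ and $\bmu/\beta$ a horizontal strip, each with multiplicity one (Pieri). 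On the $\PP^{N-k-1}$-bundle $p_{2}$ one has $\mathcal{L}\cong\Oo(-1)$ on fibres and $\mathcal{A}=p_{2}^{\ast}\V$, so $Rp_{2\ast}(\s_{\beta}(\mathcal{A})\otimes\mathcal{L}^{\otimes b})\cong\s_{\beta}\V\otimes Rp_{2\ast}(\mathcal{L}^{\otimes b})$, and $Rp_{2\ast}(\mathcal{L}^{\otimes b})$ vanishes for $1\le b\le N-k-1$. The interlacing inequalities defining the horizontal strip force $|\bmu|-|\beta|\in\{\lambda_{k}+1,\dots,N-k\}$, so the unique surviving graded piece is the one with $|\bmu|-|\beta|=N-k$, i.e.\ $\beta=\blam+(1^{k})$; for it $Rp_{2\ast}\mathcal{L}^{\otimes(N-k)}\cong\tdet(\CC^{N}/\V)[-(N-k-1)]$ and $\s_{\blam+(1^{k})}\V\otimes\tdet(\CC^{N}/\V)=\s_{\blam+(1^{k})}\V\otimes\tdet(\V)^{-1}=\s_{\blam}\V$. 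Hence $Rp_{2\ast}(\s_{\bmu}(\mathcal{B}))\cong\s_{\blam}\V[-(N-k-1)]$, which together with the previous step gives $\Phi_{{\Tt}'\bo_{(k,N-k)}}(\s_{\blam}\V)\cong\s_{\blam}\V$; the extremes $k=1$ (where $\V=\Oo(-1)$) and $N-k=1$ (where $\V\cong\Omega_{\PP^{N-1}}(1)$, $\E^{R}\simeq R\Gamma$, $\E\simeq(-)\otimes\Oo$) serve as sanity checks.

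\textbf{The main difficulty.} The core of the argument is the last step: identifying the relative Borel--Weil--Bott output of the middle step as a single Schur power of $\V_{k+1}$ (and not of its dual), tracking the twists by $\w_{p_{2}}$ and $\tdet$ and the cohomological shifts without error, and --- above all --- checking that exactly one graded piece of $\s_{\bmu}(\mathcal{B})$ survives $Rp_{2\ast}$ and that, with all shifts cancelling, it reassembles to $\s_{\blam}\V$ itself rather than to a shift or a proper direct summand.
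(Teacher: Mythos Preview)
Your proof is correct but follows a genuinely different route from the paper's. The paper never unpacks ${\Tt}'\bo_{(k,N-k)}$ geometrically. Instead it uses the identification $\s_{\blam}\V \cong \Ff_{\lambda_{1}} \ast \cdots \ast \Ff_{\lambda_{k}}\bo_{(0,N)}$ (equation~\eqref{bwbkap1}) to rewrite $\Phi_{{\Tt}'\bo_{(k,N-k)}}(\s_{\blam}\V)$ as ${\E}_{0}{\F}_{N-k}({\spi}^{+})^{-1}{\F}_{\lambda_1}\cdots{\F}_{\lambda_k}\bo_{(0,N)}$ and then manipulates this expression purely through the axioms of Definition~\ref{catshifted0}: condition~(9)(a) slides $({\spi}^{+})^{-1}$ to the right, condition~(7)(a) gives the vanishing ${\F}_{N-k}{\F}_{N-k+1}\cong 0$ when $\lambda_1=N-k$, and when $\lambda_1\le N-k-1$ the commutator triangle~(11)(a) together with the commutation~(11)(c) lets ${\E}_{0}$ migrate all the way to $\bo_{(0,N)}$ where it dies, collapsing the triangle to the desired isomorphism.

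What each approach buys: the paper's argument is essentially bookkeeping with the categorical relations and requires no cohomology computations; it also makes transparent why the result holds for any categorical $\dot{\brmU}_{0,N}(L\SL_2)$-action realizing $\s_{\blam}\V$ as a product of $\Ff$'s, not just the geometric one. Your argument, by contrast, is self-contained in classical coherent-sheaf language (relative Borel--Weil--Bott, Pieri filtrations, projective-bundle cohomology) and does not invoke the shifted $0$-affine formalism beyond the identification ${\Tt}'\bo_{(k,N-k)}\cong\E\E^{R}$; this makes it accessible to a reader unfamiliar with Definition~\ref{catshifted0}, at the cost of more explicit tracking of twists and shifts. Both are short once set up; the paper's is arguably slicker because the hard geometry (Borel--Weil--Bott) is hidden inside the single citation~\eqref{bwbkap1}.
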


This tells us that the Kapranov exceptional collection is a (categorical) eigenbasis for the weak idempotent $\Phi_{{\Tt}'\bo_{(k,N-k)}}$. Since $(\Tt'\bo_{(k,N-k)}, \ \Ss'\bo_{(k,N-k)})$ is a complementary idempotent, we obtain the following corollary.

\begin{cory}[Corollary \ref{componentcat}] \label{com}
The component categories in (\ref{shortsod}) for $\Dd^b\Coh(\Gr(k,N))$ are given by
\begin{equation*}
\emph{Im}\Phi_{{\Tt}'\bo_{(k,N-k)}}= \langle \ \s_{\blam}\V \ \rangle_{0 \leq \lambda_1 \leq N-k-1}, \ 
\emph{Im}\Phi_{{\Ss}'\bo_{(k,N-k)}}= \langle \ \s_{\blam}\V \ \rangle_{\lambda_1 =N-k}.
\end{equation*}
\end{cory}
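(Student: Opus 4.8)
\medskip
\noindent\textbf{Proof strategy for Corollary \ref{com}.}
The plan is to reduce to the action of the two Fourier--Mukai functors $\Phi_{{\Tt}'\bo_{(k,N-k)}}$, $\Phi_{{\Ss}'\bo_{(k,N-k)}}$ on the Kapranov exceptional objects $\s_{\blam}\V$ and then propagate along triangles. Write $\Aa=\langle\,\s_{\blam}\V\,\rangle_{0\le\lambda_1\le N-k-1}$ and $\Bb=\langle\,\s_{\blam}\V\,\rangle_{\lambda_1=N-k}$ for the two claimed component categories. I will use: (i) by \eqref{kape} the objects $\s_{\blam}\V$, $\blam\in P(N-k,k)$, generate $\Dd^b\Coh(\Gr(k,N))$ as a triangulated category; and (ii) both $\Phi_{{\Tt}'\bo_{(k,N-k)}}$ and $\Phi_{{\Ss}'\bo_{(k,N-k)}}$ are exact, $\Phi_{{\Tt}'\bo_{(k,N-k)}}\circ\Phi_{{\Ss}'\bo_{(k,N-k)}}\cong 0\cong\Phi_{{\Ss}'\bo_{(k,N-k)}}\circ\Phi_{{\Tt}'\bo_{(k,N-k)}}$, and (since $({\Tt}'\bo_{(k,N-k)},{\Ss}'\bo_{(k,N-k)})$ is a pair of complementary idempotents, cf.\ Definition \ref{compleidem}) for every $X$ there is a functorial exact triangle $\Phi_{{\Tt}'\bo_{(k,N-k)}}(X)\to X\to\Phi_{{\Ss}'\bo_{(k,N-k)}}(X)\to\Phi_{{\Tt}'\bo_{(k,N-k)}}(X)[1]$ realizing the induced decomposition of the identity functor, with first map the natural transformation coming from the idempotent structure.

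First I would pin down the action on Kapranov's collection. Theorem \ref{act} already computes $\Phi_{{\Tt}'\bo_{(k,N-k)}}(\s_{\blam}\V)$: it is $0$ when $\lambda_1=N-k$ and isomorphic to $\s_{\blam}\V$ (via the above natural map) when $0\le\lambda_1\le N-k-1$. Feeding each $\s_{\blam}\V$ into the triangle in (ii) gives the action of $\Phi_{{\Ss}'\bo_{(k,N-k)}}$: when $\lambda_1=N-k$ the first vertex vanishes, so $\Phi_{{\Ss}'\bo_{(k,N-k)}}(\s_{\blam}\V)\cong\s_{\blam}\V$; when $0\le\lambda_1\le N-k-1$, the map $\Phi_{{\Tt}'\bo_{(k,N-k)}}(\s_{\blam}\V)\cong\s_{\blam}\V\to\s_{\blam}\V$ is, by exceptionality ($\End(\s_{\blam}\V)=\CC$), either $0$ or an isomorphism, and it cannot be $0$ --- otherwise $\s_{\blam}\V$ would split off $\Phi_{{\Ss}'\bo_{(k,N-k)}}(\s_{\blam}\V)$, so applying $\Phi_{{\Tt}'\bo_{(k,N-k)}}$ and using $\Phi_{{\Tt}'\bo_{(k,N-k)}}\circ\Phi_{{\Ss}'\bo_{(k,N-k)}}\cong0$ would force $\Phi_{{\Tt}'\bo_{(k,N-k)}}(\s_{\blam}\V)\cong0$, contradicting Theorem \ref{act} --- hence it is an isomorphism and $\Phi_{{\Ss}'\bo_{(k,N-k)}}(\s_{\blam}\V)\cong0$. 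Thus on Kapranov's basis $\Phi_{{\Tt}'\bo_{(k,N-k)}}$ is the identity on the $\s_{\blam}\V$ with $\lambda_1\le N-k-1$ and kills the others, while $\Phi_{{\Ss}'\bo_{(k,N-k)}}$ does the opposite.

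The two equalities then follow formally. For ``$\supseteq$'': each generator $\s_{\blam}\V$ of $\Aa$ equals $\Phi_{{\Tt}'\bo_{(k,N-k)}}(\s_{\blam}\V)$, hence lies in the essential image of $\Phi_{{\Tt}'\bo_{(k,N-k)}}$, so $\Aa\subseteq\text{Im}\,\Phi_{{\Tt}'\bo_{(k,N-k)}}$; symmetrically $\Bb\subseteq\text{Im}\,\Phi_{{\Ss}'\bo_{(k,N-k)}}$. For ``$\subseteq$'': since $\Phi_{{\Tt}'\bo_{(k,N-k)}}$ is exact and $\Aa$ is a strictly full triangulated subcategory, the full subcategory of those $X$ with $\Phi_{{\Tt}'\bo_{(k,N-k)}}(X)\in\Aa$ is triangulated; by the previous paragraph it contains every $\s_{\blam}\V$, hence by (i) it is all of $\Dd^b\Coh(\Gr(k,N))$, so the essential image of $\Phi_{{\Tt}'\bo_{(k,N-k)}}$ lies in $\Aa$, and since $\Aa$ is triangulated, $\text{Im}\,\Phi_{{\Tt}'\bo_{(k,N-k)}}\subseteq\Aa$; the same argument with $\Bb$ gives $\text{Im}\,\Phi_{{\Ss}'\bo_{(k,N-k)}}\subseteq\Bb$. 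Combining yields $\text{Im}\,\Phi_{{\Tt}'\bo_{(k,N-k)}}=\Aa$ and $\text{Im}\,\Phi_{{\Ss}'\bo_{(k,N-k)}}=\Bb$.

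The only non-formal step is the middle one: reading off the action of $\Phi_{{\Ss}'\bo_{(k,N-k)}}$ on the exceptional objects from that of $\Phi_{{\Tt}'\bo_{(k,N-k)}}$ through the complementary-idempotent triangle, the small subtlety there being to rule out the ``zero map'' case using $\End(\s_{\blam}\V)=\CC$ together with $\Phi_{{\Tt}'\bo_{(k,N-k)}}\circ\Phi_{{\Ss}'\bo_{(k,N-k)}}\cong 0$. Everything else is the standard d\'evissage along a full exceptional collection, the one thing to keep in mind being that ``$\text{Im}$'' here denotes the triangulated hull of the essential image, so that a one-sided containment of that essential image inside a triangulated subcategory already gives the desired inclusion.
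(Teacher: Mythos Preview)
Your proof is correct and follows the approach the paper implicitly intends: the paper states Corollary \ref{componentcat} without an explicit argument, treating it as an immediate consequence of Theorem \ref{actofidem} together with the complementary-idempotent structure, and your d\'evissage along Kapranov's full exceptional collection is exactly the standard way to make that deduction precise. The one point you spell out that the paper leaves tacit is the exclusion of the zero-map case via $\End(\s_{\blam}\V)=\CC$ and $\Phi_{{\Tt}'}\circ\Phi_{{\Ss}'}\cong 0$; this is correct, and in fact one can shortcut it by noting that for a counital idempotent the counit $\epsilon$ becomes an isomorphism after applying $\Phi_{{\Tt}'}$, so $\Phi_{{\Tt}'}(\s_{\blam}\V)\cong\s_{\blam}\V$ already forces $\epsilon_{\s_{\blam}\V}$ to be an isomorphism.
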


\subsection{Further remarks and related works}

First, we have to mention that; in fact, we also define other functors ${\TTt}_{i}''\bo_{\kk}$ and ${\SSs}_{i}''\bo_{\kk}$ in $\Hom(\Kk(\kk),\Kk(\kk))$ (see (\ref{Tk''}) and (\ref{Sk''}) for details) by using adjunctions like the definitions of ${\TTt}_{i}'\bo_{\kk}$ and ${\SSs}_{i}'\bo_{\kk}$. Then we prove that all the above results (Theorem \ref{mainresult}, Theorem \ref{act}, and Corollary \ref{com}) for $({\TTt}_{i}''\bo_{\kk}, {\SSs}_{i}''\bo_{\kk})$. Moreover, it turns out that the idempotent triangles given by $({\TTt}_{i}'\bo_{\kk}, \SSs_{i}'\bo_{\kk})$, $({\TTt}_{i}''\bo_{\kk}, {\SSs}_{i}''\bo_{\kk})$ are equivalent to the categorical commutator relations, i.e., conditions (11)(a)(b) in Definition \ref{catshifted0}, see Remark \ref{catcomm'}. 

Furthermore, by study the relations ((A), (B), and (C) in Theorem \ref{mainresult}) for the pairs of complementary idempotents $({\TTt}_{i}'\bo_{\kk},{\SSs}_{i}'\bo_{\kk})$ and $({\TTt}_{i}''\bo_{\kk},{\SSs}_{i}''\bo_{\kk})$, we provide a short proof of the categorical braid relations for $\Tt_i=\Oo_{G/B \times_{G/P_i} G/B}$, see Corollary \ref{shortcatbraid}.

Second, since we only construct categorical $\Uu$-action on the usual derived categories $\Dd^b\Coh(\Fl_{\kk}(\CC^N))$ in \cite{Hsu}. We would like to upgrade this action to the equivariant setting in future work. For example, construct action on the $G$-equivariant derived category $\Dd^b_{G}\Coh(\Fl_{\kk}(\CC^N))$ so that we can extend the results in this article to obtain pairs of complementary idempotents in the triangulated monoidal category 
\begin{equation*}
    \Dd^b\Coh_{G}(\Fl_{\kk}(\CC^N) \times \Fl_{\kk}(\CC^N)) \cong \Dd^b\Coh(P\backslash G / P)
\end{equation*} which is the parabolic coherent Hecke category and $P \subset G$ is the parabolic subgroup such that $G/P \cong \Fl_{\kk}(\CC^N)$. Such categories also appear in works by Arkiphov-Kanstrup \cite{AK1}, \cite{AK2}, and Ben-Zvi-Nalder \cite{BN} with the name quasi-coherent Hecke categories and Demazure Hecke categories, respectively. 

Finally, pairs of complementary idempotents were used by Elias-Hogancamp \cite{EH} for the development of the theory of categorical diagonalization, which serves a primary tool to categorify $q$-Young symmetrizers with application to homological knot invariants, see Abel-Hogancamp \cite{AH}, Elias-Hogancamp \cite{EH1}, and Hogancamp \cite{Ho1}, \cite{Ho2} for related works. It would be interesting to see if there is any application of our pairs of complementary idempotents to homological knot invariant.

\subsection{Acknowledgment}
The author would like to thank Cheng-Chiang Tsai and Chun-Ju Lai for their helpful discussions. The author is supported by MSTC grant 111-2628-M-001-007.

\section{Preliminaries} \label{section 2}

In this section, we provide some background materials for the tools used in the later part of the article.

\subsection{Exceptional collection and semiorthogonal decomposition}

In the first subsection, we give the definitions of semiorthogonal decomposition and exceptional collection. We refer the readers to \cite{Ku1} for a detailed survey.

Let $\Dd$ be a $\CC$-linear triangulated category. 

Given a class $\Ee$ of objects in $\Dd$, we denoted $\langle \Ee \rangle$ to be the minimal full triangulated subcategory of $\Dd$ containing all objects in $\Ee$ and closed under taking direct summands. Similarly for any sequence of full triangulated subcategories $ \Aa_{1},...,\Aa_{n}$ in $\Dd$ we denote by $\langle \Aa_{1},...,\Aa_{n} \rangle$  the minimal full triangulated subcategory of $\Dd$ which contains all of $\Aa_{i},...,\Aa_{n}$.

We begin with the definition of semiorthogonal decompositions.

\begin{definition} 
A  \textit{semiorthogonal decomposition} ($\SOD$ for short) of $\Dd$ is a sequence of full triangulated subcategories $\Aa_{1},...,\Aa_{n}$ such that \begin{enumerate}
    \item there is no non-zero Homs from right to left, i.e.
    $\Hom_{\Dd}(A_{i},A_{j})=0$ for all $A_{i} \in \text{Ob}(\Aa_{i})$, $A_{j} \in \text{Ob}(\Aa_{j})$ where $1 \leq j <  i \leq n$.
    \item $\Dd$ is generated by $\Aa_{1},...,\Aa_{n}$, i.e. the smallest full triangulated subcategory containing $\Aa_{1},...,\Aa_{n}$ is equal to $\Dd$.
\end{enumerate} We will use the notation $\Dd=\langle \Aa_{1},...,\Aa_{n} \rangle$ for a semiorthogonal decomposition of $\Dd$ with components $\Aa_{1},...,\Aa_{n}$.
\end{definition}

A standard way to produce a $\SOD$ is to construct a left or right admissible subcategory, which is the following definition.

\begin{definition}  
A full triangulated subcategory $\Aa \subset \Dd$ is called  \textit{right admissible} if, for the inclusion functor $i:\Aa \rightarrow \Dd$, there is a right adjoint $i^{!}:\Dd \rightarrow \Aa$, and  \textit{left admissible} if there is a left adjoint $i^*:\Dd \rightarrow \Aa$. It is called  \textit{admissible} if it is both left and right admissible.
\end{definition}

We give a remark.

\begin{remark}
A semiorthogonal decomposition $\Dd=\langle \Aa_{1},...,\Aa_{n} \rangle$ is called strong if for each $k$, the category $\Aa_k$ is admissible in $\langle \Aa_{k},...,\Aa_{n} \rangle$. If $X$ is a smooth projective variety, then any semiorthogonal decomposition of the derived category of coherent sheaves $\Dd^b(X)$ is strong.
\end{remark}

Then the following lemma gives us a way to produce $\SOD$s.

\begin{lemma} [\cite{Bon}] \label{lemma 1} 
If $\Dd=\langle \Aa, \Bb \rangle$ is a $\SOD$, then $\Aa$ is left admissible and B is right admissible. Conversely, if $\Aa \subset \Dd$ is left admissible, then $\Dd=\langle \Aa, ^{\perp}\Aa \rangle$ is a $\SOD$, and if $\Bb \subset \Dd$ is right admissible,
then $\Dd=\langle \Bb^{\perp}, \Bb \rangle$ is a $\SOD$.
\end{lemma}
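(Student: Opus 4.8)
The plan is to deduce both directions from the existence and formal properties of the \emph{decomposition triangle} attached to the pair; this is the argument of Bondal, cf.\ \cite{Bon}.

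\emph{First direction.} Assume $\Dd=\langle \Aa,\Bb\rangle$ is a $\SOD$. The heart of the matter is the claim that every $X\in\Dd$ fits into a distinguished triangle
\[
\beta(X)\to X\to\alpha(X)\to\beta(X)[1],\qquad \beta(X)\in\Bb,\ \alpha(X)\in\Aa.
\]
To produce it I would let $\Dd'\subseteq\Dd$ be the full subcategory of objects admitting such a triangle, note it is closed under shifts, contains $\Aa$ (take $\beta=0$) and $\Bb$ (take $\alpha=0$), and then check it is closed under cones; since $\Dd$ is generated by $\Aa$ and $\Bb$, this forces $\Dd'=\Dd$. For uniqueness and functoriality of the triangle I would use that semiorthogonality $\Hom_{\Dd}(\Bb,\Aa)=0$ together with closure of $\Aa,\Bb$ under shifts gives $\Hom_{\Dd}(B,A[j])=0$ for all $j\in\ZZ$, $B\in\Bb$, $A\in\Aa$; a standard chase then shows $X\mapsto\alpha(X)$ and $X\mapsto\beta(X)$ are functors. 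Finally, applying $\Hom_{\Dd}(-,A)$ with $A\in\Aa$ to the decomposition triangle of $X$ and killing the two outer terms by semiorthogonality yields $\Hom_{\Dd}(X,A)\cong\Hom_{\Aa}(\alpha(X),A)$, so $\alpha$ is left adjoint to the inclusion $\Aa\hookrightarrow\Dd$; dually, applying $\Hom_{\Dd}(B,-)$ for $B\in\Bb$ shows $\beta$ is right adjoint to $\Bb\hookrightarrow\Dd$. Hence $\Aa$ is left admissible and $\Bb$ is right admissible.

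\emph{Converse direction.} Suppose $\Aa\subseteq\Dd$ is left admissible, with left adjoint $i^{*}:\Dd\to\Aa$ to the inclusion $i$, and set $\Bb={}^{\perp}\Aa$, which is a full triangulated subcategory. The semiorthogonality condition is immediate from the definition of ${}^{\perp}\Aa$. For generation, given $X\in\Dd$ I would complete the unit $\eta_{X}:X\to ii^{*}X$ of the adjunction to a distinguished triangle
\[
Y\to X\xrightarrow{\ \eta_{X}\ } ii^{*}X\to Y[1],
\]
and claim $Y\in{}^{\perp}\Aa$. Indeed, $\eta_{X}$ induces an isomorphism $\Hom_{\Dd}(ii^{*}X,iA')\xrightarrow{\sim}\Hom_{\Dd}(X,iA')$ for every $A'\in\Aa$; applying $\Hom_{\Dd}(-,iA)$ to the triangle and using this for $A'=A$ and $A'=A[1]$, the long exact sequence forces $\Hom_{\Dd}(Y,iA)=0$. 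Thus $X$ lies in a triangle with one term in $\Aa$ and the other in ${}^{\perp}\Aa$, so $\Dd=\langle \Aa,{}^{\perp}\Aa\rangle$ is a $\SOD$. The right-admissible statement is the dual: if $\Bb$ is right admissible with right adjoint $i^{!}:\Dd\to\Bb$, then completing the counit $ii^{!}X\to X$ to a triangle exhibits a decomposition of $X$ with terms in $\Bb^{\perp}=\{Z:\Hom_{\Dd}(B,Z)=0\ \forall B\in\Bb\}$ and in $\Bb$, giving $\Dd=\langle \Bb^{\perp},\Bb\rangle$.

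\emph{Main obstacle.} The only genuinely delicate point is closure of $\Dd'$ under cones in the first direction: given $\phi:X\to X'$ with decomposition triangles for $X$ and $X'$, one must build one for $\mathrm{Cone}(\phi)$. Here the composite $\beta(X)\to X\to X'\to\alpha(X')$ vanishes by semiorthogonality, so $\beta(X)\to X'$ factors through $\beta(X')$; this produces a morphism of decomposition triangles lying over $\phi$, and an application of the octahedral axiom (equivalently, the $3\times 3$ diagram) assembles $\mathrm{Cone}(\beta(X)\to\beta(X'))\in\Bb$ and $\mathrm{Cone}(\alpha(X)\to\alpha(X'))\in\Aa$ into a decomposition triangle for $\mathrm{Cone}(\phi)$. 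This step is routine but is where the diagram chase must be carried out carefully; everything else is formal manipulation of adjunctions and long exact sequences.
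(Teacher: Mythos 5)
Your proposal is correct and is exactly the standard argument of Bondal that the paper cites for this lemma without reproducing a proof: existence and functoriality of the decomposition triangle $\beta(X)\to X\to\alpha(X)$ yields the adjoints in one direction, and completing the (co)unit of the adjunction to a triangle yields the $\SOD$ in the other. The one step you flag as delicate (closure under cones via the vanishing of $\beta(X)\to X\to X'\to\alpha(X')$ and the octahedral axiom) is indeed where the work lies, and your treatment of it is the standard one; note also that since the paper's $\langle-\rangle$ is closed under direct summands, one should remark that representability of $A\mapsto\Hom_{\Dd}(X,iA)$ passes to summands, which is immediate.
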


The simplest example of an admissible subcategory is the one generated by an exceptional object.

\begin{definition}
An object $E \in \text{Ob}(\Dd)$ is called  \textit{exceptional} if 
\begin{equation*}
    \Hom_{\Dd}(E,E[l])=\begin{cases}
		\CC & \ \text{if} \ l=0 \\
		0 & \ \text{if} \ l \neq 0.
	\end{cases}
\end{equation*}
\end{definition}

Then we define the notion of exceptional collections.

\begin{definition} 
An ordered collection $\{E_{1},...,E_{n}\}$, where $E_{i} \in \text{Ob}(\Dd)$ for all $1 \leq i \leq n$, is called an \textit{exceptional collection} if each $E_{i}$ is exceptional and $\Hom_{\Dd}(E_{i},E_{j}[l])=0$ for all $i>j$ and $l \in \ZZ$. The collection (sequence) is called  \textit{strong} exceptional if in addition
\begin{equation*}
\Hom_{\Dd}(E_{i},E_{j}[l])=0 \ \text{for all} \ i, \ j \ \text{and} \ l \neq 0.	
\end{equation*}
\end{definition}

Thus, an exceptional collection $\{E_{1},...,E_{n}\}$ in $\Dd$ naturally give rise to a $\SOD$ of $\Dd$
\begin{equation*}
	\Dd=\langle \Aa, E_{1},...,E_{n} \rangle
\end{equation*} where $\Aa=\langle E_{1},...,E_{n} \rangle^{\perp}$ and $E_{i}$ denote the full triangulated subcategory generated by the object $E_{i}$. An exceptional collection is called  \textit{full} if the subcategory $\Aa$ is zero.

Next, we define the notion of (right) dual exceptional collection.
\begin{definition}  \label{definition 7}
	Let $\{E_{1},...,E_{n}\}$ be an exceptional collection on the triangulated category $\Dd$. An exceptional collection $\{F_{1},...,F_{n}\}$ of objects in $\Dd$ is called \textit{right dual} to $\{E_{1},...,E_{n}\}$ if 
	\begin{equation*}
		\Hom_{\Dd}(F_{i},E_{i}[l])=\begin{cases}
			\CC & \ \text{if} \ l=0 \\
			0  & \ \text{if} \ l \neq 0
		\end{cases}
	\end{equation*} and $\Hom_{\Dd}(F_{i},E_{j}[l])=0$ for $i \neq j$ and $l \in \ZZ$.
\end{definition}

\subsection{Fourier-Mukai transforms}

In this subsection, we recall the tool of Fourier-Mukai transforms and kernels. The main reference is the book by Huybrechts \cite{Huy}.

Let $X$ be a smooth and complex projective variety. We will work with the bounded derived category of coherent sheaves on $X$, denoted by $\Dd^b\Coh(X)$. Throughout this article, unless we explicitly mention otherwise, all functors between derived categories will be assumed to be derived. For example, we will write $f^*$ and $\otimes$ instead of $Rf^*$ and $\otimes^{L}$, respectively.

\begin{definition} 
Let $X$ and $Y$ be two smooth and complex projective varieties. A  \textit{Fourier-Mukai kernel} is an object $\Pp$ of the
bounded derived category of coherent sheaves on $X \times Y$. Given $\Pp \in \text{Ob}(\Dd^b\Coh(X \times Y))$, we may define
the associated  \textit{Fourier-Mukai transform}, which is the functor
\begin{align*}
    \Phi_{\Pp}:&\Dd^b\Coh(X) \rightarrow \Dd^b\Coh(Y)  \\
    &\Ff \mapsto \pi_{2*}(\pi_{1}^*(\Ff) \otimes \Pp)
\end{align*} where $\pi_{1}$, $\pi_{2}$ are natural projections from $X \times Y$ to $X$, $Y$ respectively.
\end{definition}

We call $\Phi_{\Pp}$ the Fourier-Mukai transform with (Fourier-Mukai) kernel $\Pp$. For convenience, we will just write FM for Fourier-Mukai.

The first property of FM transforms is that they have left and right adjoints that are themselves FM transforms.

\begin{proposition} \cite[Proposition 5.9]{Huy}  \label{Proposition 1}
For $\Phi_{\Pp}:\Dd^b\Coh(X) \rightarrow \Dd^b\Coh(Y)$ is the FM transform with kernel $\Pp$, define 
\begin{equation*}
	\Pp_{L}=\Pp^{\vee} \otimes \pi^*_{2}\omega_{Y}[\tdim  Y], \ \Pp_{R}=\Pp^{\vee} \otimes \pi^*_{1}\omega_{X}[\tdim X].
\end{equation*} Then
\begin{equation*}
\Phi_{\Pp_{L}}:\Dd^b\Coh(Y) \rightarrow \Dd^b\Coh(X), \  \Phi_{\Pp_{R}}:\Dd^b\Coh(Y) \rightarrow \Dd^b\Coh(X)
\end{equation*} are the left and right adjoints of $\Phi_{\Pp}$, respectively.
\end{proposition}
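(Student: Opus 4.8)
The plan is to verify the adjunction $\Hom_{\Dd^b\Coh(Y)}(\Phi_{\Pp}(\Ff), \Gg) \cong \Hom_{\Dd^b\Coh(X)}(\Ff, \Phi_{\Pp_R}(\Gg))$ (and the dual statement for $\Pp_L$) by unwinding both sides using the standard package of derived functor adjunctions on smooth projective varieties: the adjoint pair $(f^*, f_*)$ for a proper morphism, the projection formula, the tensor-Hom adjunction, and — crucially — Grothendieck–Serre duality, which for a smooth projective variety $X$ of dimension $\tdim X$ takes the form $f_*(\Ff^\vee) \cong (f_! \Ff)^\vee$ or, more usefully here, $\Hom(f_* \Ff, \Gg) \cong \Hom(\Ff, f^! \Gg)$ with $f^!(-) = f^*(-) \otimes \omega_{X/Y}[\dim X - \dim Y]$ when $f$ is smooth proper. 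First I would set up notation: write $\pi_1, \pi_2$ for the projections off $X\times Y$, recall $\Phi_{\Pp}(\Ff) = \pi_{2*}(\pi_1^*\Ff \otimes \Pp)$, and observe that since $X, Y$ are smooth projective, so is $X\times Y$, and the projections are smooth and proper with $\omega_{\pi_2} = \pi_1^* \omega_X[\dim X]$ and $\omega_{\pi_1} = \pi_2^*\omega_Y[\dim Y]$.

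The computation itself proceeds by a chain of natural isomorphisms. For the right adjoint, starting from $\Hom_Y(\pi_{2*}(\pi_1^*\Ff \otimes \Pp), \Gg)$, apply Serre duality / the fact that $\pi_{2*}$ has right adjoint $\pi_2^!(-) = \pi_2^*(-)\otimes \pi_1^*\omega_X[\dim X]$ to rewrite it as $\Hom_{X\times Y}(\pi_1^*\Ff \otimes \Pp, \pi_2^*\Gg \otimes \pi_1^*\omega_X[\dim X])$; then move $\Pp$ across via tensor-Hom to get $\Hom_{X\times Y}(\pi_1^*\Ff, \Pp^\vee \otimes \pi_2^*\Gg \otimes \pi_1^*\omega_X[\dim X])$; then use that $\pi_1^*$ has right adjoint $\pi_{1*}$ to land at $\Hom_X(\Ff, \pi_{1*}(\Pp^\vee \otimes \pi_1^*\omega_X[\dim X] \otimes \pi_2^*\Gg))$. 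Finally, the projection formula pulls $\pi_1^*\omega_X[\dim X]$ out of the pushforward, identifying the right-hand object as $\Phi_{\Pp_R}(\Gg)$ with $\Pp_R = \Pp^\vee \otimes \pi_1^*\omega_X[\dim X]$ — noting that $\Phi_{\Pp_R}$ as a transform $\Dd^b\Coh(Y)\to\Dd^b\Coh(X)$ uses $\pi_2$ as its "first" projection and $\pi_1$ as its "second", so the roles match. The argument for $\Pp_L$ is the mirror image, using instead that $\pi_{2*}$ composed appropriately has a left adjoint computed via $\pi_2^!$ on the other side, producing the twist $\pi_2^*\omega_Y[\dim Y]$.

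The step I expect to be the main obstacle — or at least the one requiring the most care — is the correct bookkeeping of which twist ($\omega_X$ versus $\omega_Y$) and which shift attaches to the left versus the right adjoint, together with the duality shifts: it is easy to produce a formula that is off by a shift or has the wrong canonical bundle, and the smoothness/projectivity hypotheses enter precisely to guarantee $\Pp^\vee$ behaves well (dualizing complex is a shifted line bundle) and that the relative canonical bundles are as claimed. A clean way to organize this is to invoke Neeman's theorem on existence of adjoints for a formal argument that adjoints exist and are FM transforms, and then pin down the kernels by the explicit chain above; but since the statement is quoted from Huybrechts, I would simply cite \cite[Proposition 5.9]{Huy} for the identification and restrict the written proof to recalling the duality input, or give the chain of isomorphisms above in full with each arrow labeled by the adjunction or formula used. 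I would not grind through the coherence checks that these isomorphisms are natural in $\Ff$ and $\Gg$, as these are standard.
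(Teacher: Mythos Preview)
The paper does not supply its own proof of this proposition; it is stated as a background result and attributed to \cite[Proposition 5.9]{Huy}. Your sketch is correct and is essentially the argument given in Huybrechts: the chain of adjunctions $(\pi_{2*},\pi_2^!)$ via Grothendieck duality, tensor--Hom, and $(\pi_1^*,\pi_{1*})$ is exactly how the right adjoint kernel is identified there, and the left adjoint follows by the same computation applied to $\Pp_L$ (or by checking $(\Pp_L)_R \cong \Pp$ using $(\Pp^\vee)^\vee \cong \Pp$).

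One small clean-up: your final remark that ``the projection formula pulls $\pi_1^*\omega_X[\dim X]$ out of the pushforward'' is unnecessary, since the expression $\pi_{1*}(\Pp^\vee \otimes \pi_1^*\omega_X[\dim X] \otimes \pi_2^*\Gg)$ is already literally $\Phi_{\Pp_R}(\Gg)$ with $\Pp_R = \Pp^\vee \otimes \pi_1^*\omega_X[\dim X]$; and your description of the left-adjoint case (``$\pi_{2*}$ composed appropriately has a left adjoint computed via $\pi_2^!$ on the other side'') is a bit garbled --- the cleanest route is simply to run the identical right-adjoint computation for $\Phi_{\Pp_L}$ and observe that the resulting kernel is $\Pp$. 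Neither point affects correctness.
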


The second property is that the composition of FM transforms is also a FM transform.

\begin{proposition} \cite[Proposition 5.10]{Huy}  \label{Proposition 2}
Let $X, Y, Z$ be smooth projective varieties over $\CC$. Consider objects $\Pp \in \Rm{Ob}(\Dd^b\Coh(X \times Y))$ and $\Qq \in \Rm{Ob}(\Dd^b\Coh(Y \times Z))$. They define FM transforms $\Phi_{\Pp}:\Dd^b\Coh(X) \rightarrow \Dd^b\Coh(Y)$, $\Phi_{\Qq}:\Dd^b\Coh(Y) \rightarrow \Dd^b\Coh(Z)$.  We use $\ast$ to denote the operation for convolution, i.e.
\begin{equation*}
\Qq \ast \Pp:=\pi_{13*}(\pi_{12}^*(\Pp)\otimes \pi_{23}^{*}(\Qq)).
\end{equation*} Then for $\R=\Qq \ast \Pp \in \Dd^b(X \times Z)$, we have $\Phi_{\Qq} \circ \Phi_{\Pp} \cong \Phi_{\R}$. 
\end{proposition}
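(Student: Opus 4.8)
The plan is to exhibit both $\Phi_{\Qq}\circ\Phi_{\Pp}$ and $\Phi_{\R}$ as one and the same functor $\Dd^{b}\Coh(X)\to\Dd^{b}\Coh(Z)$, namely the ``pull--tensor--push'' functor through the triple product $X\times Y\times Z$ with kernel $\pi_{XY}^{*}\Pp\otimes\pi_{YZ}^{*}\Qq$. Write $\pi_{X},\pi_{Y},\pi_{Z}$ and $\pi_{XY},\pi_{XZ},\pi_{YZ}$ for the projections of $X\times Y\times Z$ to its factors and to its pairwise products, and $p,p'$ (resp.\ $q,q'$; resp.\ $r,r'$) for the two projections of $X\times Y$ (resp.\ $Y\times Z$; resp.\ $X\times Z$) onto its factors. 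Because $X$, $Y$, $Z$ are smooth projective, all of these maps are flat and proper, so the derived projection formula and derived flat base change are at our disposal; I will also keep track that each isomorphism below is natural in its input, so that the conclusion is an isomorphism of functors and not merely of objects.

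First I would compute the left-hand side. Unwinding the definitions, $\Phi_{\Qq}(\Phi_{\Pp}(\Ff))=q'_{*}\big(q^{*}p'_{*}(p^{*}\Ff\otimes\Pp)\otimes\Qq\big)$. The square with corners $X\times Y\times Z$, $X\times Y$, $Y\times Z$, $Y$ and maps $\pi_{XY},\pi_{YZ},p',q$ is Cartesian, so flat base change gives $q^{*}p'_{*}\cong\pi_{YZ*}\pi_{XY}^{*}$. Substituting this in, using $p\circ\pi_{XY}=\pi_{X}$ to rewrite $\pi_{XY}^{*}p^{*}\Ff$ as $\pi_{X}^{*}\Ff$, then the projection formula for $\pi_{YZ}$ to pull $\Qq$ inside the pushforward, and finally $q'\circ\pi_{YZ}=\pi_{Z}$, I arrive at
\[
\Phi_{\Qq}(\Phi_{\Pp}(\Ff))\;\cong\;\pi_{Z*}\big(\pi_{X}^{*}\Ff\otimes\pi_{XY}^{*}\Pp\otimes\pi_{YZ}^{*}\Qq\big).
\]

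Next I would treat the right-hand side. By definition $\R=\Qq\ast\Pp=\pi_{XZ*}\big(\pi_{XY}^{*}\Pp\otimes\pi_{YZ}^{*}\Qq\big)$, so $\Phi_{\R}(\Ff)=r'_{*}\big(r^{*}\Ff\otimes\pi_{XZ*}(\pi_{XY}^{*}\Pp\otimes\pi_{YZ}^{*}\Qq)\big)$. The projection formula for $\pi_{XZ}$ moves $r^{*}\Ff$ inside the pushforward, and since $r\circ\pi_{XZ}=\pi_{X}$ it becomes $\pi_{X}^{*}\Ff$; combined with $r'\circ\pi_{XZ}=\pi_{Z}$, this yields exactly $\pi_{Z*}\big(\pi_{X}^{*}\Ff\otimes\pi_{XY}^{*}\Pp\otimes\pi_{YZ}^{*}\Qq\big)$. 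Comparing with the previous paragraph gives $\Phi_{\Qq}\circ\Phi_{\Pp}\cong\Phi_{\R}=\Phi_{\Qq\ast\Pp}$, and since every isomorphism used is natural in $\Ff$, this is an isomorphism of functors.

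The step I expect to require the most care is the appeal to derived flat base change and the derived projection formula in precisely the forms above: one needs that, for the flat proper projections among products of smooth projective varieties, $\pi_{*}$ commutes with the relevant $\pi^{*}$ and satisfies $\pi_{*}\big(\,\cdot\,\otimes\pi^{*}(\,\cdot\,)\big)\cong\pi_{*}(\,\cdot\,)\otimes(\,\cdot\,)$ on the bounded derived categories of coherent sheaves. Granting these standard facts (see \cite{Huy}), the remainder is purely formal pull--push bookkeeping on $X\times Y\times Z$ and presents no further obstacle.
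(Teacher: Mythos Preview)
Your argument is correct and is precisely the standard base-change/projection-formula computation; the paper itself does not supply a proof but simply cites \cite[Proposition 5.10]{Huy}, where exactly this argument appears. So there is nothing to compare beyond noting that you have reproduced the referenced proof.
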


\begin{remark} \label{remark 1}
Moreover by \cite[Remark 5.11]{Huy}, we have $(\Qq \ast \Pp)_{L} \cong (\Pp)_{L} \ast (\Qq)_{L}$ and $(\Qq \ast \Pp)_{R} \cong (\Pp)_{R} \ast (\Qq)_{R}$.
\end{remark}

Finally, we state a result by Kuznetsov, which relates the notion of FM transforms and $\SOD$s. Since all the varieties we work with are smooth, we decide to state the following result by Kuznetsov in a weak version. 

\begin{proposition}\cite[Theorem 7.1, Theorem 7.3]{Ku} \label{sodfm}
Let $X$ be a smooth projective variety and $\Dd^b\Coh(X)=\langle \Aa_1,...,\Aa_m \rangle$ be a $\SOD$. Let $p_i:\Dd^b\Coh(X) \rightarrow \Dd^b\Coh(X)$ be the projection to the $i$th component. Then there exists $K_i \in \Rm{Ob}(\Dd^b\Coh(X \times X))$ such that $p_i \cong \Phi_{K_i}$, which is the FM transform with kernel $K_i$.     
\end{proposition}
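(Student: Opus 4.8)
The plan is to reduce the statement to the two--term case and then invoke a representability theorem for exact functors that admit adjoints.

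\emph{Step 1 (reduction and adjointability).} By the Remark above, for $X$ smooth projective the given decomposition is strong; in particular each component $\Aa_j$ is admissible in $\Dd^b\Coh(X)$, so the inclusion $\iota_j\colon\Aa_j\hookrightarrow\Dd^b\Coh(X)$ has a left adjoint $\iota_j^{*}$ and a right adjoint $\iota_j^{!}$, and the same holds for every ``partial'' subcategory $\langle\Aa_a,\dots,\Aa_b\rangle$. Grouping
\[
\Dd^b\Coh(X)=\bigl\langle\,\langle\Aa_1,\dots,\Aa_{i-1}\rangle,\ \langle\Aa_i,\dots,\Aa_m\rangle\,\bigr\rangle,\qquad
\langle\Aa_i,\dots,\Aa_m\rangle=\bigl\langle\Aa_i,\ \langle\Aa_{i+1},\dots,\Aa_m\rangle\bigr\rangle,
\]
one sees, using Lemma \ref{lemma 1} and the functorial ``decomposition triangle'' $\iota_\Bb\iota_\Bb^{!}\to\mathrm{id}\to\iota_\Aa\iota_\Aa^{*}$ attached to any two--term $\SOD$ $\langle\Aa,\Bb\rangle$, that $p_i$ is the composite $\iota_{\Aa_i}\circ(\iota'_{\Aa_i})^{*}\circ\iota_{\langle\Aa_i,\dots,\Aa_m\rangle}^{!}$, a finite composition of admissible inclusions and their one--sided adjoints. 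Because $\Dd^b\Coh(X)$ and its admissible subcategories have Serre functors, an exact functor between any two of them admitting one adjoint admits both; hence $p_i$ is an exact endofunctor of $\Dd^b\Coh(X)$ with both a left and a right adjoint.

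\emph{Step 2 (representability).} I would then apply the theorem that any exact functor $\Dd^b\Coh(X)\to\Dd^b\Coh(X)$ between bounded derived categories of coherent sheaves on smooth projective varieties which admits a right adjoint is isomorphic to a Fourier--Mukai transform $\Phi_{K}$ with kernel $K\in\Dd^b\Coh(X\times X)$, unique up to isomorphism: for fully faithful such functors this is Orlov's representability theorem, and the general adjointable case follows from the uniqueness and functoriality of the canonical DG enhancement of $\Dd^b\Coh(X)$ (Lunts--Orlov, To\"{e}n, Canonaco--Stellari). Applied to $p_i$ this produces the kernel $K_i$. Alternatively, closer to Kuznetsov's original argument, one constructs $K_i$ by hand: starting from $\mathrm{id}_{\Dd^b\Coh(X)}\cong\Phi_{\Oo_{\Delta}}$ and the decomposition triangle of Step 1, one proves by base change that mutation through an admissible subcategory is again a kernel functor; the decomposition triangle then lifts to an exact triangle in $\Dd^b\Coh(X\times X)$, and $K_i$ is read off from it.

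\emph{Main obstacle.} The entire difficulty lies in Step 2. A two--term projection is defined purely categorically by its decomposition triangle, and a morphism of Fourier--Mukai transforms need not be induced by a morphism of the underlying kernels, so one cannot naively ``take cones of kernels'' to lift the construction; circumventing this requires either the DG--enhancement machinery or Kuznetsov's base--change construction of the projection kernel, both of which go well beyond the elementary adjunction calculus of Propositions \ref{Proposition 1} and \ref{Proposition 2}. By contrast, the reduction to the two--term case, the adjointability of $p_i$, and the functoriality of the decomposition triangle are all formal.
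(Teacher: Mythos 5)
The paper offers no proof of this proposition: it is imported verbatim from Kuznetsov \cite{Ku} (Theorems 7.1 and 7.3), so there is no in-paper argument to compare yours against; I can only measure your sketch against what the cited reference actually does. Your Step 1 is correct and entirely formal: strongness of the decomposition on a smooth projective variety gives admissibility of every component and of every partial subcategory, writing $p_i$ as a composite of admissible inclusions and their one-sided adjoints is standard, and the existence of Serre functors upgrades one adjoint to two.

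The genuine gap is in the first route of your Step 2. There is no theorem asserting that every exact endofunctor of $\Dd^b\Coh(X)$ admitting adjoints is a Fourier--Mukai transform: Orlov's representability theorem requires full faithfulness, and the general adjointable case is not merely open but false --- Rizzardo and Van den Bergh have constructed an exact functor between bounded derived categories of smooth projective varieties that is not isomorphic to any kernel functor, and in that setting adjoints exist automatically by Bondal--Van den Bergh representability. Nor does ``uniqueness of the DG enhancement'' repair this: the functor $p_i$, as you have built it in Step 1, is a priori only a triangulated functor, and an abstract triangulated functor need not lift to a DG functor. What actually proves the proposition is your second route, which is Kuznetsov's argument: the given semiorthogonal decomposition induces, after base change along one projection $X\times X\to X$, an $X$-linear semiorthogonal decomposition of $\Dd^b\Coh(X\times X)$; the kernel $K_i$ is the projection of $\Oo_{\Delta}$ to the $i$th component of that decomposition, and the substance of Theorems 7.1 and 7.3 of \cite{Ku} is that the associated kernel functor $\Phi_{K_i}$ is isomorphic to $p_i$. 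So route (a) should be deleted rather than offered as an alternative, since as stated it rests on a false assertion; your closing paragraph, by contrast, diagnoses the real difficulty (morphisms of kernel functors need not lift to morphisms of kernels) correctly.
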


\subsection{Complementary idempotents}
In this subsection, we recall the work by M. Hogancamp \cite{Ho} about categorical idempotents in a triangulated monoidal category. Here, by ``triangulated monoidal category", we mean a triangulated category with a monoidal structure subject to some compatibility conditions. We refer the reader to Definition 3.1 in \textit{loc. cit.} for details. 

Now we assume $\Dd$ is a $\CC$-linear triangulated category with a monoidal structure $\ast$, and we denote $\bo_{\Dd}$ to be the monoidal identity in $\Dd$, i.e., $\bo_{\Dd} \ast E \cong E \cong E \ast \bo_{\Dd}$.

\begin{definition} \cite[Definition 4.1]{Ho} \label{weakidem}
A \textit{weak idempotent} is an object $E \in \text{Ob}(\Dd)$ such that $E \ast E \cong E$. If $E \in \text{Ob}(\Dd)$ is a weak idempotent, we denote $E\Dd$ to be the full subcategory consisting of objects $A$ such that $E \ast A \cong A$. Similarly, define $\Dd E$ and $E \Dd E$.
\end{definition}

\begin{definition} \cite[Definition 5.4]{BS} 
\cite[Definition 4.2] {Ho} \label{compleidem}
A pair of complementary idempotents is a pair of objects $(\bS, \bT)$ in $\Dd$ such that $\bS \ast \bT \cong 0 \cong \bT \ast \bS$, together with a exact triangle
\begin{equation} \label{idemtri}
    \bT \xrightarrow{\epsilon} \bo_{\Dd} \xrightarrow{\eta} \bS \xrightarrow{\delta} \bT[1]. 
\end{equation} Sometimes we call (\ref{idemtri}) \textit{idempotent triangle}. Also, we call $(\bT,\epsilon)$ a \textit{counital idempotent} and $(\bS,\eta)$ a \textit{unital idempotent}.
\end{definition}

\begin{remark}
The notion of (co)unital idempotents in Definiton \ref{compleidem} also appears in other works with different terminologies. For example, they are called open and closed idempotents in \cite{BD}, and idempotent monads and comonads in \cite{BS}.
\end{remark}

\begin{remark}
It has been shown (Theorem 4.5 in \cite{Ho}) that the exact triangle (\ref{idemtri}) is determined by the isomorphism class $[\bT]$ (or $[\bS]$) in $K(\Dd)$ up to unique isomorphism of triangles. 
\end{remark}

We list the following results about complementary idempotents.

\begin{lemma} \cite[Lemma 2.18]{BD}
Let $E \in \Rm{Ob}(\Dd)$ be a unital or counital idempotent. Then $E \Dd E$ has the structure of a monoidal category with monoidal identity $E$.
\end{lemma}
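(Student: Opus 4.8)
The plan is to recall the standard argument for why $E\Dd E$ inherits a monoidal structure from $\Dd$ when $E$ is a (co)unital idempotent, and to spell out why the constraints restrict nicely to the subcategory. First I would treat the counital case $E = \bT$ with counit $\epsilon\colon \bT \to \bo_\Dd$; the unital case is dual. The key structural input is the identity $\bT \ast \bT \cong \bT$, and more precisely that the two maps $\epsilon \ast \mathrm{id}_{\bT}, \mathrm{id}_{\bT} \ast \epsilon \colon \bT \ast \bT \to \bT$ are both isomorphisms (this is part of what it means to be a counital idempotent, and follows from the idempotent triangle together with $\bT \ast \bS \cong 0 \cong \bS \ast \bT$: applying $\bT \ast (-)$ to the triangle $\bT \to \bo_\Dd \to \bS$ shows $\bT \ast \bT \to \bT \ast \bo_\Dd = \bT$ is an isomorphism, and similarly on the other side). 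So I would begin by establishing that both these maps are isomorphisms.

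Next I would define the monoidal product on $E\Dd E$. For $A, B \in E\Dd E$, i.e. objects with $E \ast A \cong A \cong A \ast E$ and similarly for $B$, I set $A \otimes_E B := A \ast B$. The first thing to check is that $A \ast B$ again lies in $E\Dd E$: indeed $E \ast (A \ast B) \cong (E \ast A) \ast B \cong A \ast B$ using associativity in $\Dd$ and $E \ast A \cong A$, and symmetrically $(A \ast B) \ast E \cong A \ast B$. Then I would verify that $E$ itself is a unit for $\otimes_E$ on $E\Dd E$: for $A \in E\Dd E$ we have $E \otimes_E A = E \ast A \cong A$ and $A \otimes_E E = A \ast E \cong A$ by definition of $E\Dd E$. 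The associativity constraint and pentagon axiom are simply inherited from $\Dd$, since $\otimes_E$ is the restriction of $\ast$; the only genuinely new point is that the left and right unit constraints with respect to $E$ must be exhibited, and for this one uses the isomorphisms $E \ast A \cong A$ coming with the objects together with the chosen counit $\epsilon$ and the fact that $\epsilon \ast \mathrm{id}$, $\mathrm{id} \ast \epsilon$ are isomorphisms, to produce coherent unit isomorphisms. I would also remark that on morphisms there is nothing to do: $E\Dd E$ is a full subcategory of $\Dd$, so $\Hom$-sets and composition are just those of $\Dd$, and $\ast$ is already a bifunctor on $\Dd$.

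The main obstacle, and the one step deserving care rather than a wave of the hand, is the verification of the \emph{triangle identities / coherence} for the new unit $E$: in a general monoidal category the left and right unitors must satisfy the triangle axiom relating $\lambda$, $\rho$, and the associator, and one must check that the restricted/adjusted unitors on $E\Dd E$ still do. Here the cleanest route is to observe that $E \ast (-)\colon \Dd \to E\Dd$ is (up to the idempotent completion issues, which do not arise since $\Dd$ is idempotent complete in the relevant sense) a colocalization with $E\Dd E$ as its essential image when composed with $(-)\ast E$, and that a (co)localization of a monoidal category at a (co)idempotent algebra-like object carries the induced monoidal structure — this is exactly the content of \cite[Lemma 2.18]{BD}, which we are allowed to invoke. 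So in the writeup I would give the explicit construction as above and then point to the coherence verification as being the formal but routine part, citing \cite{BD} and noting that the unital case $E = \bS$ is strictly dual, replacing the counit $\epsilon$ by the unit $\eta\colon \bo_\Dd \to \bS$ and using that $\eta \ast \mathrm{id}_{\bS}$, $\mathrm{id}_{\bS} \ast \eta$ are isomorphisms.
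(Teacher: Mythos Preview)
The paper does not give its own proof of this lemma: it is simply quoted from \cite[Lemma 2.18]{BD} and used as background in Section~2.3, with no argument supplied. So there is nothing in the paper to compare your proposal against.

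That said, your sketch is a correct outline of the standard argument. The closure of $E\Dd E$ under $\ast$, the fact that $E$ acts as a unit on objects of $E\Dd E$, and the inheritance of the associator are all exactly as you describe. The one point where you are a bit circular is at the end: you propose to cite \cite{BD} for the triangle coherence, but that is precisely the reference the lemma is taken from, so in a self-contained writeup you would need to actually carry out that check. It is indeed routine once you know that $\epsilon\ast\mathrm{id}$ and $\mathrm{id}\ast\epsilon$ are isomorphisms (which you correctly derive from the idempotent triangle and the orthogonality $\bT\ast\bS\cong 0\cong\bS\ast\bT$), but if you want an honest proof rather than a pointer back to the source, you should write out how the unitors for $E$ are defined and verify the triangle axiom directly.
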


\begin{proposition}\cite[Theorem 1.2]{Ho} \label{cohTS}
Let $(\bT,\bS)$ be a pair of complementary idempotents in $\Dd$. Then we have 
\begin{align}
\Hom_{\Dd}(\bT,\bT)&\cong \Hom_{\Dd}(\bT,\bo_{\Dd}) \label{coT}\\
\Hom_{\Dd}(\bS,\bS)&\cong \Hom_{\Dd}(\bo_{\Dd},\bS) \label{coS} \\
    \Hom_{\Dd}(\bT,\bS)&\cong 0. \label{orthTS}
\end{align} Moreover, the endomorphism algebras in (\ref{coT}) and (\ref{coS}) are called the cohomology of $\bT$ and $\bS$, respectively.
\end{proposition}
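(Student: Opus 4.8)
The plan is to exploit the idempotent triangle (\ref{idemtri}) together with the vanishings $\bS\ast\bT\cong 0\cong\bT\ast\bS$, applying the functor $\Hom_{\Dd}(-,-)$ and the fact that $\bo_{\Dd}$ is the monoidal unit. First I would establish (\ref{orthTS}). Apply $\bS\ast(-)$ to the triangle $\bT\xrightarrow{\epsilon}\bo_{\Dd}\xrightarrow{\eta}\bS\xrightarrow{\delta}\bT[1]$; since $\bS$ is a weak idempotent ($\bS\ast\bS\cong\bS$) and $\bS\ast\bT\cong 0$, exactness forces the map $\bS\ast\bo_{\Dd}\to\bS\ast\bS$, i.e. $\bS\xrightarrow{\bS\ast\eta}\bS$, to be an isomorphism. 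Dually, applying $(-)\ast\bT$ to the triangle and using $\bS\ast\bT\cong 0$, $\bT\ast\bT\cong\bT$ shows $\bT\xrightarrow{\epsilon\ast\bT}\bT$ is an isomorphism. Now for (\ref{orthTS}): given $f\colon\bT\to\bS$, consider the composite $\bT\cong\bT\ast\bT\xrightarrow{f\ast\bT}\bS\ast\bT\cong 0$; since $\epsilon\ast\bT$ is invertible one identifies $f$ with this composite (using that $f = f\circ(\text{iso})$ after tensoring, and the monoidal structure lets one strip the $\bT$), hence $f=0$. (One must be a little careful writing this out with the unitor isomorphisms, but it is formal.)

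Next, for (\ref{coT}): apply $\Hom_{\Dd}(\bT,-)$ to the triangle (\ref{idemtri}) to get a long exact sequence
\begin{equation*}
\cdots\to\Hom_{\Dd}(\bT,\bS[-1])\to\Hom_{\Dd}(\bT,\bT)\xrightarrow{\epsilon_*}\Hom_{\Dd}(\bT,\bo_{\Dd})\to\Hom_{\Dd}(\bT,\bS)\to\cdots.
\end{equation*}
By (\ref{orthTS}) applied after a shift, $\Hom_{\Dd}(\bT,\bS[-1])\cong 0$ and $\Hom_{\Dd}(\bT,\bS)\cong 0$, so $\epsilon_*$ is an isomorphism, giving (\ref{coT}). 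Symmetrically, applying $\Hom_{\Dd}(-,\bS)$ to (\ref{idemtri}) yields a long exact sequence in which the two neighbouring terms are $\Hom_{\Dd}(\bT[-1],\bS)$ and $\Hom_{\Dd}(\bT,\bS)$, both zero by (\ref{orthTS}), so $\eta^*\colon\Hom_{\Dd}(\bS,\bS)\to\Hom_{\Dd}(\bo_{\Dd},\bS)$ is an isomorphism, which is (\ref{coS}).

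The main obstacle I anticipate is the careful bookkeeping in deducing (\ref{orthTS}) from the weak-idempotent and orthogonality hypotheses: one has to handle the left/right unitor isomorphisms $\bo_{\Dd}\ast E\cong E\cong E\ast\bo_{\Dd}$ and the associativity constraints of the monoidal structure correctly, and argue that ``tensoring a map by an idempotent and then identifying via the unit'' genuinely recovers the original map. Once that formal lemma is in place, the rest is a routine application of the long exact sequences associated to (\ref{idemtri}). Everything else — the existence of the long exact $\Hom$-sequences, the shift compatibilities — is standard for triangulated categories, so no serious difficulty is expected there.
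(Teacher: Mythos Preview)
The paper does not prove this proposition; it is quoted as \cite[Theorem 1.2]{Ho} and used as a black box, so there is no in-paper proof to compare against. Your approach is the standard one and is essentially correct: once $\Hom_{\Dd}(\bT,\bS[n])\cong 0$ is known for all $n$, the long exact sequences obtained from the idempotent triangle immediately give (\ref{coT}) and (\ref{coS}) exactly as you describe.

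The only place that needs tightening is your argument for (\ref{orthTS}). As written, the composite $\bT\cong\bT\ast\bT\xrightarrow{f\ast\bT}\bS\ast\bT\cong 0$ is a map $\bT\to 0$, not a map $\bT\to\bS$, so it cannot literally be ``identified with $f$''; and the phrase ``the monoidal structure lets one strip the $\bT$'' hides the real step, since $(-)\ast\bT$ is not faithful in general. The clean fix is the bifunctoriality square
\[
\begin{tikzcd}
\bT\ast\bT \ar[r,"\mathrm{id}_{\bT}\ast f"] \ar[d,"\epsilon\ast\mathrm{id}_{\bT}"'] & \bT\ast\bS \ar[d,"\epsilon\ast\mathrm{id}_{\bS}"] \\
\bo_{\Dd}\ast\bT \ar[r,"\mathrm{id}\ast f"'] & \bo_{\Dd}\ast\bS
\end{tikzcd}
\]
The top-right corner vanishes since $\bT\ast\bS\cong 0$, and the left vertical is an isomorphism since $\bS\ast\bT\cong 0$; hence the bottom arrow, which is $f$ via the unitors, is zero. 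The same square with $\bS$ replaced by $\bS[n]$ handles all shifts. This is precisely the ``careful bookkeeping'' you anticipated, and once it is in place your proof goes through.
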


\begin{example} \cite[Example 5.6]{BS} \cite[Remark 4.17]{Ho} \label{ssod} 
Let $\MM$ be a triangulated category on which $\Dd$ acts by triangulated endofunctors, and $(\bT,\bS)$ be a pair of complementary idempotents in $\Dd$. Then we have the $\SOD$: $\MM=\langle \Rm{Im}\bS, \Rm{Im}\bT \rangle$.
\end{example}

\section{Affine 0-Heck algebra and its action} \label{section 3}

In this section, we recall the definition of affine 0-Hecke algebra and its action on the Grothendieck group of full flag variety. 

Fix $N \geq 2$ be a positive integer. We begin with the definition of affine 0-Hecke algebra.

\begin{definition} \label{0-Hecke} 
The affine 0-Hecke algebra $\Hh_{N}(0)$ is defined to be the unital associative $\CC$-algebra generated by the elements $T_{1},...,T_{N-1}$ and the polynomial algebra $\CC[X_{1}^{\pm 1},...,X_{N}^{\pm 1}]$ subject to the following relations
\begin{equation}  \tag{H01}   \label{H01}
T_{i}^2=T_{i},
\end{equation}
\begin{equation}  \tag{H02}   \label{H02}
 T_{i}T_{i+1}T_{i}=T_{i+1}T_{i}T_{i+1}, \ T_{i}T_{j}=T_{j}T_{i}, \ \text{if} \ |i-j| \geq 2, 
\end{equation}
\begin{equation}  \tag{H03}   \label{H03}
	T_{i}X_{j}=X_{j}T_{i} \ \text{if} \ j \neq i,i+1,
\end{equation}
\begin{equation}  \tag{H04}   \label{H04}
	X_{i+1}T_{i}=T_{i}X_{i}+X_{i+1},
\end{equation}
\begin{equation}  \tag{H05}   \label{H05}
	X_{i}T_{i}=T_{i}X_{i+1}-X_{i+1}.
\end{equation}
\end{definition}

\begin{remark}
The affine 0-Hecke algebra $\Hh_N(0)$ is a specialization of the affine Hecke algebra at $q=0$. Moreover, the relations (\ref{H04}) and (\ref{H05}) are usually called the Bernstein-Lusztig relations.
\end{remark}

Next, we explain there is an action of $\Hh_{N}(0)$ on the Grothendieck group of the full flag variety. Let $G=\SLL_{N}(\CC)$, $B \subset G$ be the Borel subgroup of upper triangular matrices, and $T \subset G$ be the maximal torus of diagonal matrices. Then the full flag variety, defined to be quotient $G/B$, can be identified with the following space
\begin{equation} \label{fullflag}
   G/B=\{0 \subset V_{1} \subset V_{2} \subset ... \subset V_{N}=\CC^N \ | \ \dim V_{k}=k \ \text{for} \ \text{all} \ k  \}.
\end{equation}

The Weyl group $W$ is isomorphic to the symmetric group, denoted by $S_N$. Let $\Sigma=\{\alpha_{1},...,\alpha_{N-1}\}$ to be the set of simple roots with the associated simple reflections $\{s_1,...,s_{N-1}\}$ that generate the Weyl group $W=S_{N}$. Then for each $1 \leq i \leq N-1$, there is the minimal parabolic subgroup $P_i=B \cup Bs_iB$. Similarly, the quotient $G/P_i$, which is called a partial flag variety, can be identified with the following space
\begin{equation}  \label{partialflag}
G/P_i=\{0 \subset V_{1} \subset V_{2} \subset ...\subset V_{i-1} \subset V_{i+1} \subset ... \subset  V_{N}=\CC^N \ | \ \dim V_{k} =k \ \text{for} \ k \neq i \}.
\end{equation}

There are natural projections $\pi_{i}:G/B \rightarrow G/P_i$, which is a $\PP^1$-fibration, for $1 \leq i \leq N-1$. 
They induce natural pullback $\pi^{*}_{i}:K(G/P_i) \rightarrow K(G/B)$ and pushforward $\pi_{i*}:K(G/B) \rightarrow K(G/P_i)$ for $1 \leq i \leq N-1$. 

Then we obtain the following operators on $K(G/B)$, which are called the Demazure operates, and the construction is originally due to Bernstein-Gelfand-Gelfand \cite{BGG} in cohomology. 
\begin{equation*}
\delta_{i}=\pi^{*}_{i}\pi_{i*}:K(G/B) \rightarrow K(G/B).   
\end{equation*} 

Furthermore, we have the following result.
\begin{proposition}[\cite{D1}, \cite{D2}]
Those operators satisfy the following
\begin{align*}
&\delta^2_i=\delta_i, \\
&\delta_i\delta_{i+1}\delta_i=\delta_{i+1}\delta_{i}\delta_{i+1}.
\end{align*} More precisely, $\{\delta_i\}$ generate the 0-Hecke algebra and acting on $K(G/B)$.
\end{proposition}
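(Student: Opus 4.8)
I would prove the two displayed identities separately and then read off the statement about the generated algebra. For idempotence, note that $\delta_i^2=\pi_i^*\pi_{i*}\pi_i^*\pi_{i*}$, so it is enough to show $\pi_{i*}\pi_i^*=\mathrm{id}$ on $K(G/P_i)$; since $\pi_i$ is a $\PP^1$-fibration we have $R\pi_{i*}\Oo_{G/B}\cong\Oo_{G/P_i}$, hence by the projection formula $R\pi_{i*}\pi_i^*\Ff\cong\Ff\otimes R\pi_{i*}\Oo_{G/B}\cong\Ff$, and passing to $K$-theory gives $\pi_{i*}\pi_i^*=\mathrm{id}$ and therefore $\delta_i^2=\pi_i^*(\pi_{i*}\pi_i^*)\pi_{i*}=\delta_i$.

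\textbf{Braid relation.} The first step is to reinterpret $\delta_i$ on $K(G/B)$ as the transform induced by the structure sheaf of $Z_i:=G/B\times_{G/P_i}G/B\subseteq G/B\times G/B$: applying flat base change to the fiber square of $\pi_i$ with itself gives $\pi_i^*\pi_{i*}=q_{2*}q_1^*$ with $q_1,q_2:Z_i\to G/B$ the two projections, i.e. $\delta_i$ is the restriction to $K$-theory of the Fourier--Mukai transform with kernel $\Oo_{Z_i}$. By Proposition \ref{Proposition 2} a composition $\delta_{i_1}\cdots\delta_{i_\ell}$ is then induced by the convolution $\Oo_{Z_{i_1}}\ast\cdots\ast\Oo_{Z_{i_\ell}}$, and because the iterated fiber products involved are smooth (towers of $\PP^1$-bundles) this convolution equals $R\pi_*\Oo_{BS(\underline w)}$, where $BS(\underline w)=G/B\times_{G/P_{i_1}}\cdots\times_{G/P_{i_\ell}}G/B$ and $\pi:BS(\underline w)\to G/B\times G/B$ is the projection to the first and last factors. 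When $\underline w=(s_{i_1},\dots,s_{i_\ell})$ is a reduced word for $w\in W$, the map $\pi$ is birational onto the $G$-orbit closure $X_w\subseteq G/B\times G/B$ through $(eB,wB)$, and since $X_w$ has rational singularities one gets $R\pi_*\Oo_{BS(\underline w)}\cong\Oo_{X_w}$, independent of the chosen reduced word. Because $s_is_{i+1}s_i=s_{i+1}s_is_{i+1}$ in $W$, both $\delta_i\delta_{i+1}\delta_i$ and $\delta_{i+1}\delta_i\delta_{i+1}$ are induced by $\Oo_{X_w}$ for this common $w$, hence they agree. (Idempotence can also be recovered this way, using the length-decreasing identity $\Oo_{Z_i}\ast\Oo_{X_w}\cong\Oo_{X_w}$ for $\ell(s_iw)<\ell(w)$, proved by the same $\PP^1$-fibration argument, but that is not needed for the braid relation.)

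\textbf{The generated algebra.} The relations $T_i^2=T_i$ and the braid relations form a presentation of the finite $0$-Hecke algebra, the subalgebra of $\Hh_N(0)$ generated by $T_1,\dots,T_{N-1}$, so $T_i\mapsto\delta_i$ extends to a surjection of this algebra onto the subalgebra of $\End K(G/B)$ generated by $\delta_1,\dots,\delta_{N-1}$. It is injective by a dimension count: the finite $0$-Hecke algebra has dimension $|W|=N!$, while the operators $\delta_w:=\delta_{i_1}\cdots\delta_{i_\ell}$ (for $\underline w$ a reduced word of $w\in S_N$) are linearly independent, since they act unitriangularly on the basis of $K(G/B)$ given by classes of structure sheaves of Schubert varieties.

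\textbf{Main obstacle.} The crux is the braid relation, specifically the classical input that the Bott--Samelson map $\pi$ is birational onto $X_w$ and that $X_w$ has rational singularities, so that $R\pi_*\Oo_{BS(\underline w)}\cong\Oo_{X_w}$ canonically and independently of the reduced word; granting this, both displayed relations follow formally from $s_i^2=e$ and $s_is_{i+1}s_i=s_{i+1}s_is_{i+1}$. A more computational alternative, and the route of \cite{D1},\cite{D2}, is to identify $\delta_i$ with the isobaric divided-difference operator $f\mapsto(f-e^{-\alpha_i}s_if)/(1-e^{-\alpha_i})$ on $K(G/B)\cong R(T)\otimes_{R(T)^W}\ZZ$ and to verify the two relations by direct Laurent-polynomial manipulation.
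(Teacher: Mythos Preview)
The paper does not give its own proof of this proposition: it is stated with a citation to Demazure \cite{D1,D2}, and the surrounding text only records the explicit isobaric divided-difference formula (\ref{dema}) (which is then used to check the Bernstein--Lusztig-type relations with the $a_j$, not the idempotent or braid relations). So there is no in-paper argument to compare against; what is implicit is precisely the ``computational alternative'' you mention at the end, namely Demazure's direct Laurent-polynomial verification from the formula $\delta_i=(a_{i+1}-a_is_i)/(a_{i+1}-a_i)$.

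Your primary argument is correct and in fact proves more than the proposition asks. Identifying $\delta_i$ with the transform of kernel $\Oo_{Z_i}$, computing the convolution as $R\pi_*\Oo_{BS(\underline w)}$ (Tor-independence along the iterated $\PP^1$-fibrations makes the derived tensor products underived), and invoking birationality of the Bott--Samelson map together with rational singularities of $X_w$ yields the identity already in $\Dd^b(G/B\times G/B)$, i.e.\ the categorical braid relation $\Tt_i\ast\Tt_{i+1}\ast\Tt_i\cong\Tt_{i+1}\ast\Tt_i\ast\Tt_{i+1}$. That is exactly relation (\ref{6.2}) of Proposition~\ref{catactaffine0Hecke}, which the paper establishes later by entirely different means (via the categorical $\dot{\Rm{\bold U}}_{0,N}(L\SL_N)$-action, and again in Corollary~\ref{shortcatbraid}); you are effectively anticipating that result and then taking $K$-classes. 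What your route buys is a uniform, geometry-driven proof valid for any reduced word; what the explicit-formula route buys is self-containment with no input from Schubert geometry.

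One small point on the last paragraph: the ``unitriangular'' justification for linear independence of the $\delta_w$ is imprecise as stated. A cleaner way to see injectivity is that $\delta_w$ sends the class of a $T$-fixed point (the minimal Schubert cell) to $[\Oo_{X_w}]$, and the Schubert structure-sheaf classes form a basis of $K(G/B)$. That said, the proposition as phrased in the paper only asserts that the $\delta_i$ satisfy the $0$-Hecke relations (hence give an action), not that the action is faithful, so this part is supererogatory.
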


We denote $\V_{i}$ to be the tautological bundle of rank $i$ on $G/B$ for $1 \leq i \leq N$, then we have the natural line bundles $\LL_{i}=\V_{i}/\V_{i-1}$ on $G/B$ for $1 \leq i \leq N$. Let $a_i=[\LL_{i}] \in K(G/B)$ be the class in the Grothendieck group. Then $K(G/B)$ has the following presentation, which is the so-called Borel presentation
\begin{equation} \label{Borpre}
K(G/B) \cong \CC[a_1^{\pm},...,a_N^{\pm}]/ \left \langle e_i- {N \choose i} \right \rangle	
\end{equation} where $\langle e_i- {N\choose i} \rangle$ is the ideal generated by $\{e_i- {N\choose i}\}_{i=1}^{N}$, and $e_i$ is the $i$-th symmetric polynomial in $a_1, a_2,...,a_{N}$.

Under the presentation (\ref{Borpre}), $\delta_{i}$ has the following explicit description
\begin{equation} \label{dema}
\delta_{i}=\frac{a_{i+1}-a_is_{i}}{a_{i+1}-a_i}	
\end{equation} where $s_i$ is the simple reflection that permute $a_i$ and $a_{i+1}$ for all $ 1 \leq i \leq N-1$. 

Finally, we define the operators $x_{j}:K(G/B) \rightarrow K(G/B)$ to be multiplication by $a_{j}$ for all $1 \leq j \leq N$. Then with the help of the expression (\ref{dema}), we can verify the following relations.
\begin{align*}
& a_{i}a_{j}=a_{j}a_{i} \  \text{for all} \ i,j,    
&\delta_{i}a_{j}=a_{j}\delta_{i} \ \text{if} \ j \neq i,i+1,   \\ 
&a_{i+1}\delta_{i}=\delta_{i}a_{i}+a_{i+1}, &a_{i}\delta_{i}=\delta_{i}a_{i+1}-a_{i+1}.
\end{align*}

Thus we conclude that
\begin{corollary} \label{actaffineoHecke}
There is a action of $\Hh_N(0)$ on $K(G/B)$ where the generators $T_i$ and $X_j$ acting by $\delta_i$ and $a_j$, respectively.
\end{corollary}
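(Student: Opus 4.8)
The plan is to verify Corollary~\ref{actaffineoHecke} by checking directly that the assignment $T_i \mapsto \delta_i$, $X_j \mapsto a_j$ respects each of the defining relations \eqref{H01}--\eqref{H05} of $\Hh_N(0)$. The relations \eqref{H01} (idempotency of $\delta_i$) and \eqref{H02} (the braid relations among the $\delta_i$) are already supplied by Proposition~(\cite{D1},\cite{D2}) quoted just above, so nothing remains to be done there. Likewise the relations among the $a_j$ alone are trivial since $K(G/B)$ is a commutative ring and the $x_j$ are multiplication operators, which commute with one another. So the only genuine content is the mixed relations \eqref{H03}, \eqref{H04}, \eqref{H05}, that is, the last three displayed equalities of operators, namely $\delta_i a_j = a_j \delta_i$ for $j \neq i,i+1$, together with $a_{i+1}\delta_i = \delta_i a_i + a_{i+1}$ and $a_i \delta_i = \delta_i a_{i+1} - a_{i+1}$.

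First I would record the explicit formula \eqref{dema}, $\delta_i = \frac{a_{i+1} - a_i s_i}{a_{i+1}-a_i}$, acting on the Borel presentation \eqref{Borpre}, where $s_i$ swaps $a_i \leftrightarrow a_{i+1}$ and fixes all other $a_k$; note $\delta_i$ is well defined because the numerator $a_{i+1} - a_i s_i$ is divisible by $a_{i+1}-a_i$ in the polynomial ring (it sends a polynomial $f$ to $\frac{a_{i+1}f - a_i (s_i f)}{a_{i+1}-a_i}$, which has no pole along $a_i = a_{i+1}$ since numerator and denominator both vanish there). Then for $j \neq i, i+1$ the operator $x_j$ of multiplication by $a_j$ commutes with $s_i$, and $a_j$ passes through the fraction, giving $\delta_i x_j = x_j \delta_i$; this is \eqref{H03}. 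For \eqref{H04} and \eqref{H05} I would apply both sides to a test polynomial $f$ in $a_1,\dots,a_N$: using $x_{i+1} \delta_i f = \frac{a_{i+1}(a_{i+1}f - a_i s_i f)}{a_{i+1}-a_i}$ and $\delta_i x_i f = \delta_i(a_i f) = \frac{a_{i+1} a_i f - a_i (a_{i+1} s_i f)}{a_{i+1}-a_i} = \frac{a_i(a_{i+1}-a_{i+1}... )}{}$ — more carefully, $\delta_i(a_i f) = \frac{a_{i+1}a_i f - a_i a_{i+1} s_i f}{a_{i+1}-a_i}$ (since $s_i$ swaps $a_i$ and $a_{i+1}$ in the factor $a_i$), so $x_{i+1}\delta_i f - \delta_i x_i f = \frac{a_{i+1}^2 f - a_{i+1}a_i s_i f - a_{i+1}a_i f + a_i a_{i+1} s_i f}{a_{i+1}-a_i} = \frac{a_{i+1}(a_{i+1}-a_i)f}{a_{i+1}-a_i} = a_{i+1} f$, which is exactly \eqref{H04}; \eqref{H05} is the analogous one-line computation swapping the roles of $a_i$ and $a_{i+1}$ in the multiplication factor.

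Finally I would note that all of these are identities of operators on the polynomial ring $\CC[a_1^{\pm},\dots,a_N^{\pm}]$ which descend to the quotient $K(G/B)$ in \eqref{Borpre} because $\delta_i$ and the $x_j$ preserve the ideal $\langle e_i - \binom{N}{i}\rangle$ — indeed $\delta_i$ fixes every symmetric polynomial (as $s_i$ does and the fraction then collapses) and multiplication by $a_j$ obviously preserves any ideal. Since the presentation of $\Hh_N(0)$ in Definition~\ref{0-Hecke} is by generators and relations, verifying that the images satisfy every relation is exactly what is needed to produce the algebra homomorphism $\Hh_N(0) \to \End_{\CC}(K(G/B))$, i.e.\ the desired action. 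I expect no real obstacle here: the content is the handful of rational-function manipulations with $\delta_i$, and the only point requiring a word of care is the well-definedness (divisibility) of $\delta_i$ and its compatibility with the Borel ideal, which I would state explicitly rather than leave implicit.
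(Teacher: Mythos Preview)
Your proposal is correct and follows essentially the same approach as the paper: the paper simply lists the four mixed relations $a_ia_j=a_ja_i$, $\delta_ia_j=a_j\delta_i$ for $j\neq i,i+1$, $a_{i+1}\delta_i=\delta_ia_i+a_{i+1}$, and $a_i\delta_i=\delta_ia_{i+1}-a_{i+1}$, asserts they can be verified using the explicit formula \eqref{dema}, and then states the corollary as an immediate consequence. You carry out precisely these verifications in detail, and your additional remark about the operators descending to the quotient \eqref{Borpre} is a point the paper leaves implicit.
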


\section{Shifted 0-affine algebras and its categorical action} \label{section 4}

In this section, we recall the definitions of the shifted 0-affine algebra and its categorical action on the derived category of partial flag varieties. 

Before we go to the detailed setup and the definition of the shifted 0-affine algebra, we should give the readers some background and motivation for it. The main motivation comes from the study of categorical action or categorification of the quantum group $\brmU_{q}(\cg)$ for $\cg$ a semisimple or Kac-Moody Lie algebra.

Based on the work \cite{BLM}, Beilinson-Lusztig-MacPherson give a geometric model for $\brmU_{q}(\SL_{2})$ (more generally $\brmU_{q}(\SL_{n})$). This can be used to construct categorical $\SL_{2}$-action. 

The weight space $V_{\lambda}$ is replaced by the weight category $\Cc(\lambda)=\Dd^bCon(\Gr(k,N))$, which is the bounded derived category of constructible sheaves on the Grassmannian $\Gr(k,N)$ with $\lambda=N-2k$. The generators $e$, $f$ act on them by using the following correspondence diagram

\begin{equation} \label{diag 1}
	\xymatrix{ 
		&&\Fl(k-1,k)=\{0 \overset{k-1}{\subset} V' \overset{1}{\subset} V \overset{N-k}{\subset} \CC^N \} 
		\ar[ld]_{p_1} \ar[rd]^{p_2}   \\
		& \Gr(k,N)  && \Gr(k-1,N)
	}
\end{equation}  where $\Fl(k-1,k)$ is the 3-step partial flag variety and $p_{1}$, $p_{2}$ are the natural projections. Then we define 
${\E}:=p_{2*}p^{*}_{1}$ and a similar functor ${\F}$ in the opposite direction that can be viewed as a lift of $e$ and $f$, respectively.  The functors ${\E}$, ${\F}$  satisfy the defining relations of $\SL_{2}$, i.e., we have 
\begin{equation*} 
	{\E\F}|_{\Cc(\lambda)} \cong {\F\E}|_{\Cc(\lambda)} \bigoplus \text{Id}^{\oplus \lambda}_{\Cc(\lambda)} \ \text{if} \ \lambda \geq 0,
\end{equation*} similarly for $\lambda \leq 0$. 

\begin{remark}
In fact, the above action can be upgraded to categorical $\brmU_{q}(\SL_2)$-action if we add the extra structure of homological shift in the derived category.
\end{remark}

The shifted 0-affine algebra arises naturally when we replace constructible sheaves with coherent sheaves, i.e., we consider $\Dd^b\Coh(\Gr(k,N))$. In the rest of this article, we will omit $\Coh$ and use $\Dd^b(X)$ to denote the bounded derived category of coherent sheaves on a variety $X$.

In this setting, there are more functors. Let $\V, \ \V'$ to be the tautological bundles on $\Fl(k-1,k)$ of rank $k$, $k-1$ respectively. Then there is a natural line bundle $\V/\V'$ on $\Fl(k-1,k)$. Using the same correspondence (\ref{diag 1}), instead of just pulling back and pushing forward directly, we have an extra twist by the line bundles $(\V/\V')^{r}$ where $r \in \ZZ$. So we have the functors
\begin{equation*} 
{\E}_{r}:=p_{2*}(p_{1}^{*}\otimes(\V/\V')^{r}):\Dd^{b}(\Gr(k,N)) \rightarrow \Dd^{b}(\Gr(k-1,N))
\end{equation*} and similarly ${\F}_{r}$ where $r \in \ZZ$.

Then the shifted 0-affine algebra is a $L\SL_{2}:=\SL_{2} \otimes \CC[t,t^{-1}]$-like algebra acting on $\bigoplus_{k} \Dd^{b}(\GG(k,N))$, where $e \otimes t^{r}$ and $f \otimes t^{s}$ acting via the functors ${\E}_{r}$, ${\F}_{s}$ respectively for $r, \ s \in \ZZ$. In \cite{Hsu}, the author is the first one to study this action in detail.  



\subsection{Shifted $q=0$ affine algebras}

In this section, we define the shifted 0-affine algebras. The presentation we use here is by finite numbers of generators and defining relations. In \cite{Hsu}, we also give a conjectural presentation defined by generating series and conjecture that the two presentations are equivalent, see conjecture A.2. in  \textit{loc. cit.}.

Similarly to the dot version $\dot{\brmU}_{q}(\SL_2)$ of $\brmU_{q}(\SL_2)$ that introduced in \cite{BLM}, the shifted 0-affine algebras we introduce below is also an idempotent form. This means that we replace the identity with the direct sum of a system of projectors, one for each element of the weight lattices. They are orthogonal idempotents for approximating the unit element. We refer to part IV in \cite{Lu1} for details of such modification.

Throughout the rest of this article, we fix a positive integer $N \geq 2$. Let
\[
C(n,N):=\{\underline{k}=(k_1,...,k_{n}) \in (\NN \cup \{0\})^n\ | \ k_{1}+...+k_{n}=N \}.
\] 

We regard each $\underline{k}$ as a weight for $\SL_{n}$ via the identification of the weight lattice of $\SL_n$ with the quotient $\ZZ^n/(1,1,...,1)$. We choose the simple root $\alpha_{i}$ to be $(0...0,-1,1,0...0)$ where the $-1$ is in the $i$th position for $1 \leq i \leq n-1$. Finally, we denote $\langle\cdot,\cdot\rangle:\ZZ^n \times \ZZ^n \rightarrow \ZZ$ to be the standard pairing. Then we introduce the shifted 0-affine algebra for $\SL_n$, which is defined by using finite generators and relations.

\begin{definition} \label{shited0affine}
We consider formal symbols of the form $1_{\lambda}x1_{\mu}$ ($\lambda, \mu \in (\NN \cup \{0\})^n$) and abbreviating $(1_{\lambda_1}x_{1}1_{\mu_1})...(1_{\lambda_i}x_{i}1_{\mu_i})=x_{1}...x_{i}1_{\mu_i}$ if the product is nonzero. Then we define the \textit{shifted 0-affine algebra}, denote by $\Uu=\dot{\Rm{\bold{U}}}_{0,N}(L\SL_n)$, to be the associative $\CC$-algebra generated by
\begin{equation*}
\bigcup_{\kk \in C(n,N)}\{1_{\kk}, 1_{\kk+\alpha_{i}}e_{i,r}1_{\kk}, \ 1_{\kk-\alpha_{i}}f_{i,s}1_{\kk},\ 1_{\kk}(\psi^{+}_{i})^{\pm 1}1_{\kk}, \ 1_{\kk}(\psi^{-}_{i})^{\pm 1}1_{\kk},\ 1_{\kk}h_{i,\pm 1}1_{\kk} \}_{1 \leq i \leq n-1}^{-k_{i}-1 \leq r \leq 0, \ 0 \leq s \leq k_{i+1}+1}
\end{equation*} with the following relations
\begin{equation} \tag{U01}   \label{U01}
1_{\kk}1_{\Ll}=\delta_{\kk,\Ll}1_{\kk}, 
\end{equation}
\begin{equation}\tag{U02}   \label{U02}
	\{(\psi^{+}_{i})^{\pm 1}1_{\kk},(\psi^{-}_{i})^{\pm 1}1_{\kk},h_{i,\pm 1}1_{\kk}\ | \ 1\leq i \leq n-1,\kk \in C(n,N)\} \ \mathrm{pairwise\ commute},
\end{equation}
\begin{equation} \tag{U03} \label{U03}
	(\psi^{+}_{i})^{\pm 1} \cdot (\psi^{+}_{i})^{\mp 1} 1_{\kk} = 1_{\kk}=(\psi^{-}_{i})^{\pm 1} \cdot (\psi^{-}_{i})^{\mp 1}1_{\kk},
	\end{equation}
	\begin{align*} \tag{U04} \label{U04}
	&e_{i,r}e_{j,s}1_{\kk}=\begin{cases}
	-e_{i,s+1}e_{i,r-1}1_{\kk} & \text{if} \ j=i \\
	e_{i+1,s}e_{i,r}1_{\kk}-e_{i+1,s-1}e_{i,r+1}1_{\kk} & \text{if} \ j=i+1 \\
	e_{i,r+1}e_{i-1,s-1}1_{\kk}-e_{i-1,s-1}e_{i,r+1}1_{\kk} &\text{if} \ j=i-1 \\
	e_{j,s}e_{i,r}1_{\kk} &\text{if} \ |i-j| \geq 2
	\end{cases},
	\end{align*}
	
	\begin{align*}\tag{U05} \label{U05}
	&f_{i,r}f_{j,s}1_{\kk}=\begin{cases}
	-f_{i,s-1}f_{i,r+1}1_{\kk} & \text{if} \ j=i \\
	f_{i,r-1}f_{i+1,s+1}1_{\kk}-f_{i+1,s+1}f_{i,r-1}1_{\kk} & \text{if} \ j=i+1 \\
	f_{i-1,s}f_{i,r}1_{\kk}-f_{i-1,s+1}f_{i,r-1}1_{\kk} &\text{if} \ j=i-1 \\
	f_{j,s}f_{i,r}1_{\kk} &\text{if} \ |i-j| \geq 2
	\end{cases},
	\end{align*}

	\begin{align*} \tag{U06} \label{U06}
	&\psi^{+}_{i}e_{j,r}1_{\kk}=
	\begin{cases}
	-e_{i,r+1}\psi^{+}_{i}1_{\kk} & \text{if} \ j=i \\
	-e_{i+1,r-1}\psi^{+}_{i}1_{\kk} & \text{if} \ j=i+1 \\
	e_{i-1,r}\psi^{+}_{i}1_{\kk}  &  \text{if} \ j=i-1 \\
	e_{j,r}\psi^{+}_{i}1_{\kk}  &  \text{if} \ |i-j| \geq 2 \\
	\end{cases},
	&\psi^{-}_{i}e_{j,r}1_{\kk}=
	\begin{cases}
	-e_{i,r+1}\psi^{-}_{i}1_{\kk} & \text{if} \ j=i \\
	e_{i+1,r}\psi^{-}_{i}1_{\kk} & \text{if} \ j=i+1 \\
	-e_{i-1,r-1}\psi^{-}_{i}1_{\kk}  &  \text{if} \ j=i-1 \\
	e_{j,r}\psi^{-}_{i}1_{\kk}  &  \text{if} \ |i-j| \geq 2 \\
	\end{cases},
	\end{align*}

	\begin{align*} \tag{U07} \label{U07}
	&\psi^{+}_{i}f_{j,r}1_{\kk}=
	\begin{cases}
	-f_{i,r-1}\psi^{+}_{i}1_{\kk} & \text{if} \ j=i \\
	-f_{i+1,r+1}\psi^{+}_{i}1_{\kk} & \text{if} \ j=i+1 \\
	f_{i-1,r}\psi^{+}_{i}1_{\kk}  &  \text{if} \ j=i-1 \\
	f_{j,r}\psi^{+}_{i}1_{\kk}  &  \text{if} \ |i-j| \geq 2 \\
	\end{cases},
	&\psi^{-}_{i}f_{j,r}1_{\kk}=
	\begin{cases}
	-f_{i,r-1}\psi^{-}_{i}1_{\kk} & \text{if} \ j=i \\
	f_{i+1,r}\psi^{-}_{i}1_{\kk} & \text{if} \ j=i+1 \\
	-f_{i-1,r+1}\psi^{-}_{i}1_{\kk}  &  \text{if} \ j=i-1 \\
	f_{j,r}\psi^{-}_{i}1_{\kk}  &  \text{if} \ |i-j| \geq 2 \\
	\end{cases},
	\end{align*}

	\begin{align*} \tag{U08} \label{U08} 
	&[h_{i,\pm 1},e_{j,r}]1_{\kk}=\begin{cases}
	0 & \text{if} \ i=j \\
	-e_{i+1,r\pm 1}1_{\kk} & \text{if} \ j=i+1 \\
	e_{i-1,r\pm 1}1_{\kk} & \text{if} \  j=i-1 \\
	0& \text{if} \ |i-j| \geq 2 \\
	\end{cases}, 
	&[h_{i,\pm 1},f_{j,r}]1_{\kk}=\begin{cases}
	0 & \text{if} \ i=j \\
	f_{i+1,r\pm 1}1_{\kk} & \text{if} \ j=i+1 \\
	-f_{i-1,r\pm 1}1_{\kk} & \text{if} \  j=i-1 \\
	0& \text{if} \ |i-j| \geq 2 \\
	\end{cases},
	\end{align*}
	
	\begin{equation} \tag{U09} \label{U09} 
	[e_{i,r},f_{j,s}]1_{\kk}=0 \ \mathrm{if} \ i \neq j\ \ \mathrm{and}\ \  [e_{i,r},f_{i,s}]1_{\kk}= \begin{cases}
	\psi^{+}_{i}h_{i,1}1_{\kk} & \text{if} \  r+s=k_{i+1}+1 \\
	\psi^{+}_{i}1_{\kk} & \text{if} \ r+s=k_{i+1} \\
	0 & \text{if} \  -k_{i}+1 \leq r+s \leq k_{i+1}-1 \\
	-\psi^{-}_{i}1_{\kk} & \text{if} \ r+s=-k_{i} \\
	-\psi^{-}_{i}h_{i,-1}1_{\kk} & \text{if} \ r+s=-k_{i}-1
	\end{cases},
	\end{equation}
	
	for any $1 \leq i,j \leq n-1$ and $r,s$ such that the above relations make sense.
\end{definition}

We give an example of the shifted 0-affine algebra.

\begin{example} \label{example 1} 
The shifted 0-affine algebra $\dot{\Rm{\bold{U}}}_{0,2}(L\SL_2)$ is the path algebra of the following quiver
\begin{equation*} 
\xymatrixcolsep{10pc}
\xymatrix{ 
    \psi^{\pm}1_{(2,0)} & \psi^{\pm}1_{(1,1)} & \psi^{\pm}1_{(0,2)} \\
    \cdot  \ar@(ul,ur) \ar@(dl,dr)
    \ar@/^/[r]^{e_{0}1_{(2,0)}} \ar@/^2pc/[r]^{e_{-1}1_{(2,0)}} \ar@/^4pc/[r]^{e_{-2}1_{(2,0)}} \ar@/^6pc/[r]^{e_{-3}1_{(2,0)}}            
	& \cdot \ar@(ul,ur) \ar@(dl,dr) \ar@/^/[r]^{e_{0}1_{(1,1)}}  \ar@/^2pc/[r]^{e_{-1}1_{(1,1)}} \ar@/^4pc/[r]^{e_{-2}1_{(1,1)}}  \ar@/^/[l]^{f_{0}1_{(1,1)}} \ar@/^2pc/[l]^{f_{1}1_{(1,1)}} \ar@/^4pc/[l]^{f_{2}1_{(1,1)}}
	& \cdot \ar@(ul,ur) \ar@(dl,dr) \ar@/^/[l]^{f_{0}1_{(0,2)}} \ar@/^2pc/[l]^{f_{1}1_{(0,2)}} \ar@/^4pc/[l]^{f_{2}1_{(0,2)}}
    \ar@/^6pc/[l]^{f_{3}1_{(0,2)}} \\
    h_{\pm1}1_{(2,0)} & h_{\pm1}1_{(1,1)} & h_{\pm1}1_{(0,2)} \\
}
\end{equation*} subject to the relations from (\ref{U01}) to (\ref{U09}). Fro example, we have $[e_{0},f_{2}]1_{(1,1)}=\psi^{+}h_{1}1_{(1,1)}$.
\end{example}

\subsection{Categorical $\Uu$-action}

In this section, we recall the definition of the categorical action for shifted 0-affine algebra that is defined in \cite{Hsu}. Since we will not use any of the categorical relations that involve the elements $h_{i,\pm 1}1_{\kk}$ in the rest of this article, we do not present the conditions that involve the lift $\Hhh_{i,\pm1}$ here. We refer the readers to Definition 3.1. in \textit{loc. cit.} for a full definition of the categorical action.

\begin{definition}  \label{catshifted0}
A categorical $\Uu=\dot{\Rm{\bold{U}}}_{0,N}(L\SL_n)$ action consists of a target 2-category $\Kk$, which is triangulated, $\CC$-linear and idempotent complete. The objects in $\Kk$ are
\[
\mathrm{Ob}(\Kk)=\{\Kk(\kk)\ |\ \kk \in C(n,N) \}
\] where each $\Kk(\kk)$ is also a triangulated category, and each Hom space $\mathrm{Hom}(\Kk(\kk),\Kk(\Ll))$ is also  triangulated. On those objects $\Kk(\kk)$ we impose the following 1-morphisms:
\begin{equation*}
\bo_{\kk}, \ {\E}_{i,r}\bo_{\kk}=\bo_{\kk+\alpha_{i}}{\E}_{i,r}, \ {\F}_{i,s}\bo_{\kk}=\bo_{\kk-\alpha_{i}}{\F}_{i,s}, \ ({\spi}^{\pm}_{i})^{\pm 1}\bo_{\kk}=\bo_{\kk}({\spi}^{\pm}_{i})^{\pm 1}, 
\end{equation*} where $1 \leq i \leq n-1$, $-k_{i}-1 \leq r \leq 0$, $0 \leq s \leq k_{i+1}+1$. Here $\bo_{\kk}$ is the identity 1-morphism of $\Kk(\kk)$. Those 1-morphisms are subject to the following conditions.
\begin{enumerate}
\item The space of maps between any two 1-morphisms is finite-dimensional.
\item If $\alpha=\alpha_{i}$ or $\alpha=\alpha_{i}+\alpha_{j}$ for some $i,j$ with $\langle\alpha_{i},\alpha_{j}\rangle=-1$, then $\bo_{\kk+r\alpha}=0$ for $r \gg 0$ or  $r \ll 0$.
\item Suppose $i \neq j$. If $\bo_{\kk+\alpha_{i}}$ and $\bo_{\kk+\alpha_{j}}$ are nonzero, then $\bo_{\kk}$ and $\bo_{\kk+\alpha_{i}+\alpha_{j}}$ are also nonzero.
\item The left and right adjoints of $\E_{i,r}$ and $\F_{i,s}$ are given by conjugation of $\spi^{\pm}_{i}$ up to homological shifts. More precisely,
\begin{enumerate}
\item $({\E}_{i,r}\bo_{\kk})^{R} \cong \bo_{\kk}{({\spi}^{+}_{i})^{r+1}}{\F}_{i,k_{i+1}+2}({\spi}^{+}_{i})^{-r-2}[-r-1]$ for all $1 \leq i \leq n-1$,
\item $({\E}_{i,r}\bo_{\kk})^{L} \cong \bo_{\kk}({\spi}^{-}_{i})^{r+k_{i}-1}{\F}_{i,0}({\spi}^{-}_{i})^{-r-k_{i}}[r+k_{i}]$ for all $1 \leq i \leq n-1$,
\item $({\F}_{i,s}\bo_{\kk})^{R} \cong \bo_{\kk}({\spi}^{-}_{i})^{-s+1}{\E}_{i,-k_{i}-2}({\spi}^{-}_{i})^{s-2}[s-1]$ for all $1 \leq i \leq n-1$,
\item $({\F}_{i,s}\bo_{\kk})^{L} \cong \bo_{\kk}({\spi}^{+}_{i})^{-s+k_{i+1}-1}{\E}_{i,0}({\spi}^{+}_{i})^{s-k_{i+1}}[-s+k_{i+1}]$ for all $1 \leq i \leq n-1$.
\end{enumerate}
\item 	
\begin{align*}
&({\spi}_{i}^{\pm})^{\pm 1} ({\spi}_{j}^{\pm})^{\pm 1} \bo_{\kk} \cong ({\spi}_{j}^{\pm})^{\pm 1}({\spi}_{i}^{\pm})^{\pm 1} \bo_{\kk} \ \text{for all} \ i, j, \\
&({\spi}_{i}^{+})^{\pm 1} ({\spi}_{i}^{+})^{\mp 1} \bo_{\kk} \cong ({\spi}_{i}^{-})^{\pm 1}({\spi}_{i}^{-})^{\mp 1} \bo_{\kk} \cong \bo_{\kk} \ \text{for all} \ i.
\end{align*}
\item The relations between ${\E}_{i,r}$, ${\E}_{j,s}$ are given by the following
\begin{enumerate}
	\item  \[
			{\E}_{i,r+1}{\E}_{i,s}\bo_{\kk} \cong \begin{cases}
			{\E}_{i,s+1}{\E}_{i,r}\bo_{\kk}[-1] & \text{if} \ r-s \geq 1 \\
			0 & \text{if} \ r=s \\
			{\E}_{i,s+1}{\E}_{i,r}\bo_{\kk}[1] & \text{if} \ r-s \leq -1. 
			\end{cases}
			\]
			\item ${\E}_{i,r}$, ${\E}_{i+1,s}$  would related by the following exact triangle
			\[
			 {\E}_{i+1,s}{\E}_{i,r+1}\bo_{\kk} \rightarrow {\E}_{i+1,s+1}{\E}_{i,r}\bo_{\kk} \rightarrow {\E}_{i,r}{\E}_{i+1,s+1}\bo_{\kk}. 
			\]
			\item
			${\E}_{i,r}{\E}_{j,s}\bo_{\kk} \cong {\E}_{j,s}{\E}_{i,r}\bo_{\kk}$, if $|i-j| \geq 2$.
		\end{enumerate}
		\item The relations between ${\F}_{i,r}$, ${\F}_{j,s}$ are given by the following
		\begin{enumerate}
			\item \[
			{\F}_{i,r}{\F}_{i,s+1}\bo_{\kk} \cong \begin{cases}
			{\F}_{i,s}{\F}_{i,r+1}\bo_{\kk}[1] & \text{if} \ r-s \geq 1 \\
			0 & \text{if} \ r=s \\
			{\F}_{i,s}{\F}_{i,r+1}\bo_{\kk}[-1] & \text{if} \ r-s \leq -1. 
			\end{cases}  
			\]
			\item ${\F}_{i,r}$, ${\F}_{i+1,s}$ are related by the following exact triangles
			\[
			{\F}_{i,r+1}{\F}_{i+1,s}\bo_{\kk} \rightarrow {\F}_{i,r}{\F}_{i+1,s+1}\bo_{\kk} \rightarrow {\F}_{i+1,s+1}{\F}_{i,r}\bo_{\kk} .
			\] 
			\item ${\F}_{i,r}{\F}_{j,s}\bo_{\kk} \cong {\F}_{j,s}{\F}_{i,r}\bo_{\kk}$, if $|i-j| \geq 2$.
		\end{enumerate}

		\item The relations between ${\E}_{i,r}$, $\Psi^{\pm}_{j}$ are given by the following
		\begin{enumerate}
			\item ${\spi}^{\pm}_{i}{\E}_{i,r}\bo_{\kk} \cong {\E}_{i,r+1}{\spi}^{\pm}_{i}\bo_{\kk}[\mp 1]$.
			\item For $|i-j|=1$, we have the following
			\[
			{\spi}^{\pm}_{i}{\E}_{i\pm1,r}\bo_{\kk} \cong {\E}_{i\pm1,r-1}{\spi}^{\pm}_{i}\bo_{\kk}[\pm 1],
			\]
			\[
			{\spi}^{\pm}_{i}{\E}_{i\mp 1,r}\bo_{\kk} \cong {\E}_{i\mp 1,r} {\spi}^{\pm}_{i}\bo_{\kk}.
			\]
			\item ${\spi}^{\pm}_{i}{\E}_{j,r}\bo_{\kk} \cong {\E}_{j,r} {\spi}^{\pm}_{i}\bo_{\kk}$, if $|i-j| \geq 2$.
		\end{enumerate}
		
		\item The relations between ${\F}_{i,r}$, $\Psi^{\pm}_{j}$ are given by the following
		\begin{enumerate}
		\item ${\spi}^{\pm}_{i}{\F}_{i,r}\bo_{\kk} \cong {\F}_{i,r-1}{\spi}^{\pm}_{i}\bo_{\kk}[\pm 1]$.
			\item For $|i-j|=1$, we have the following
			\[
			{\spi}^{\pm}_{i}{\F}_{i\pm1,s}\bo_{\kk} \cong {\F}_{i\pm1,s+1}{\spi}^{\pm}_{i}\bo_{\kk}[\mp 1],
			\]
			\[
			{\spi}^{\pm}_{i}{\F}_{i\mp1,r}\bo_{\kk} \cong {\F}_{i\mp1,r}{\spi}^{\pm}_{i}\bo_{\kk}.
			\]
			\item ${\spi}^{\pm}_{i,}{\F}_{j,r}\bo_{\kk} \cong {\F}_{j,r}{\spi}^{\pm}_{i}\bo_{\kk}$, if $|i-j| \geq 2$.
		\end{enumerate}

\item If $i \neq j$, then ${\E}_{i,r}{\F}_{j,s}\bo_{\kk} \cong 
{\F}_{j,s}{\E}_{i,r}\bo_{\kk}$.
\item For ${\E}_{i,r}{\F}_{i,s}\bo_{\kk}, {\F}_{i,s}{\E}_{i,r}\bo_{\kk} \in \mathrm{Hom}(\Kk(\kk),\Kk(\kk))$, they are related by  exact triangles, more precisely, 
\begin{enumerate}
\item ${\F}_{i,s}{\E}_{i,r}\bo_{\kk} \rightarrow {\E}_{i,r}{\F}_{i,s}\bo_{\kk} \rightarrow {\spi}^{+}_{i}\bo_{\kk}$, if  
$r+s =k_{i+1}$,
\item ${\E}_{i,r}{\F}_{i,s}\bo_{\kk} \rightarrow {\F}_{i,s}{\E}_{i,r}\bo_{\kk} \rightarrow {\spi}^{-}_{i}\bo_{\kk}$, if  $r+s = -k_{i}$,
\item ${\F}_{i,s}{\E}_{i,r}\bo_{\kk} \cong {\E}_{i,r}{\F}_{i,s}\bo_{\kk}$, if $-k_{i}+1 \leq r+s \leq k_{i+1}-1$.
\end{enumerate}
\end{enumerate} for all $r$, $s$ that make the above conditions make sense, and the isomorphisms between functors appear in every condition are abstractly defined, i.e., we do not specify any 2-morphisms that induce those isomorphisms.
\end{definition}

\subsection{Geometric example}

In this section, we mention the main theorem (Theorem 5.2 in \cite{Hsu}), which says that there is a categorical $\Uu$-action on the derived categories of coherent sheaves on partial flag varieties.

For each $\kk \in C(n,N)$, we define the $n$-step partial flag variety
\begin{equation} \label{npfl}
\Fl_{\kk}(\CC^N)\coloneqq \{V_{\bullet}=(0=V_{0} \subset V_1 \subset ... \subset V_{n}=\CC^N) \ | \ \tdim V_{i}/V_{i-1}=k_{i} \ \text{for} \ \text{all} \ i\}.	
\end{equation} 

We denote $Y(\kk)=\Fl_{\kk}(\CC^N)$. Then  the objects $\Kk(\kk)$ of the triangulated $2$-category $\Kk$ in Definition \ref{catshifted0} will be $\Dd^b(Y(\kk))$. 

On $Y(\kk)$ we denote $\V_{i}$ to be the tautological bundle whose fibre over a point $(0=V_{0} \subset V_1 \subset ... \subset V_{n}=\CC^N)$ is $V_{i}$. To define those $1$-morphisms  ${\E}_{i,r}\bo_{\kk}$, ${\F}_{i,s}\bo_{\kk}$, $({\spi}^{\pm}_{i})^{\pm 1}\bo_{\kk}$, we use the language of FM transforms, that means we will define them by using FM kernels. 

The correspondences (\ref{diag 1}) can be generalized to $\SL_n$ case, which is the following 
\begin{equation*} \label{diag 2}
\xymatrix{ 
		&&W^{1}_{i}(\kk)  
		\ar[ld]_{p_1} \ar[rd]^{p_2}   \\
		& \Fl_{\kk}(\CC^N)  && \Fl_{\kk+\alpha_i}(\CC^N)
	}
\end{equation*} where 
\begin{equation}  \label{eq 30}
	W^{1}_{i}(\kk) \coloneqq \{(V_{\bullet},V_{\bullet}') \in  Y(\kk) \times Y(\kk+\alpha_{i}) \ | \ V_{j}=V'_{j} \ \Rm{for} \ j \neq i,  \ V'_{i} \subset V_{i}\}.
\end{equation} Similarly, we have the natural line bundle $\V_{i}/\V'_{i}$ on $W^{1}_{i}(\kk)$, where $\V_{i}$, $\V'_{i}$ are the tautological bundle on $W^{1}_{i}(\kk)$ of rank $k_{1}+..+k_{i}$, $k_{1}+..+k_{i}-1$, respectively. We also have the transpose correspondence $^{\TTt}W^{1}_{i}(\kk) \subset Y(\kk+\alpha_{i}) \times Y(\kk)$. Let $\iota(\kk):W^{1}_{i}(\kk) \hookrightarrow Y(\kk) \times Y(\kk+\alpha_{i})$, $^{\TTt}\iota(\kk):{^{\TTt}}W^{1}_{i}(\kk) \hookrightarrow Y(\kk+\alpha_{i}) \times Y(\kk)$ be the inclusions and $\Delta(\kk):Y(\kk) \rightarrow Y(\kk) \times Y(\kk)$ be the diagonal map. Then we have the following theorem.

\begin{proposition}\cite[Theorem 5.2]{Hsu} \label{Thm catact}
	Let $\Kk$ be the triangulated 2-categories whose nonzero objects are $\Kk(\kk)=\Dd^b(Y(\kk))$ where $\kk \in C(n,N)$. The 1-morphisms ${\E}_{i,r}\bo_{\kk}$, $\bo_{\kk}{\F}_{i,s}$, $({\spi}^{+}_{i})^{\pm 1}\bo_{\kk}$ and $({\spi}^{-}_{i})^{\pm 1}\bo_{\kk}$ are FM transforms with kernels given by 
\begin{align*}
\Ee_{i,r}\bo_{\kk} &\coloneqq \iota(\kk)_{*} (\V_{i}/\V_{i}')^{r} \in \Dd^b(Y(\kk)\times Y(\kk+\alpha_{i})), \\
\bo_{\kk}\Ff_{i,s}&\coloneqq {^{\TTt}}\iota(\kk)_{*} (\V_{i}/\V'_{i})^{s} \in \Dd^b(Y(\kk+\alpha_{i})\times Y(\kk)), \\	
(\Psi^{+}_{i})^{\pm 1}\bo_{\kk}&\coloneqq \Delta(\kk)_{*}\tdet (\V_{i+1}/\V_{i})^{\pm 1}[\pm(1-k_{i+1})] \in \Dd^b(Y(\kk)\times Y(\kk)), \\
(\Psi^{-}_{i})^{\pm 1}\bo_{\kk}&\coloneqq \Delta(\kk)_{*} \tdet (\V_{i}/\V_{i-1})^{\mp 1}[\pm(1-k_{i})] \in \Dd^b(Y(\kk)\times Y(\kk))
\end{align*} respectively, and the 2-morphisms are maps between kernels.  Then this gives a categorical $\dot{\Uu}_{0,N}(L\SL_n)$ action, i.e. the above data satisfy Definition \ref{catshifted0}.
\end{proposition}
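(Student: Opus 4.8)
This is the author's theorem from \cite{Hsu}, so what follows is the route I would take to reconstruct its proof. The plan is to expand each of the conditions (1)--(11) of Definition \ref{catshifted0} into an explicit statement about the Fourier--Mukai kernels $\Ee_{i,r}\bo_{\kk}$, $\bo_{\kk}\Ff_{i,s}$, $(\Psi^{\pm}_{i})^{\pm1}\bo_{\kk}$, and to verify each one by a base-change and projection-formula calculation, computing all convolutions via Proposition \ref{Proposition 2} and all adjoints via Proposition \ref{Proposition 1} and Remark \ref{remark 1}. What makes this feasible is the geometry: $W^{1}_{i}(\kk)$ is smooth, and its two projections are projective bundles --- $p_{1}\colon W^{1}_{i}(\kk)\to Y(\kk)$ is the bundle of hyperplanes in $\V_{i}/\V_{i-1}$ (relative dimension $k_{i}-1$) and $p_{2}\colon W^{1}_{i}(\kk)\to Y(\kk+\alpha_{i})$ is the bundle of lines in $\V_{i+1}/\V'_{i}$ (relative dimension $k_{i+1}$), with the quotient line bundle $\V_{i}/\V'_{i}$ realised as the relative $\Oo(1)$ of one of them up to a pullback twist. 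Consequently every composite correspondence that occurs is smooth and fibered by towers of projective bundles, and every higher direct image reduces to Grothendieck's computation of $R\pi_{*}\Oo(m)$ on a projective bundle, which is concentrated in a single cohomological degree.

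Conditions (1)--(3) are then finiteness and (non)emptiness statements, i.e.\ dimension counts for the varieties $Y(\kk)$ and $W^{1}_{i}(\kk)$. Condition (5) holds because each $(\Psi^{\pm}_{i})^{\pm1}\bo_{\kk}$ is $\Delta(\kk)_{*}$ of a line bundle up to shift and line bundles on $Y(\kk)$ commute. For condition (4) I would dualise $\Ee_{i,r}\bo_{\kk}=\iota(\kk)_{*}(\V_{i}/\V'_{i})^{r}$, twist by the relative dualizing complexes of the two projections off $Y(\kk)\times Y(\kk+\alpha_{i})$, and apply Grothendieck--Verdier duality along $\iota(\kk)$ --- whose normal bundle is an explicit extension built from $\V_{i}/\V'_{i}$, $\V_{i+1}/\V_{i}$ and $\V_{i}/\V_{i-1}$ --- which returns $\iota(\kk)_{*}$ of a power of $\V_{i}/\V'_{i}$ tensored with powers of $\det(\V_{i+1}/\V_{i})$ and $\det(\V_{i}/\V_{i-1})$, shifted; matching these twists and shifts against (4)(a)--(d) identifies them with the kernels $\Psi^{\pm}_{i}\bo_{\kk}$ and their powers. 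Conditions (9) and (10) are shorter versions of the same: convolving with $(\Psi^{\pm}_{i})^{\pm1}\bo_{\kk}$ twists $\V_{i}/\V'_{i}$ on $W^{1}_{i}(\kk)$ by a determinant bundle pulled back along $p_{1}$ or $p_{2}$, and comparing the two pullbacks yields the index shift $r\mapsto r\pm1$ and the homological shift $[\mp1]$ together with the adjacent-index variants.

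For conditions (6) and (7), $\Ee_{i,r+1}\bo_{\kk}\ast\Ee_{i,s}\bo_{\kk}$ is $\rho_{*}$ of a line bundle on the iterated variety $W^{2}_{i}(\kk)=\{V''_{i}\subset V'_{i}\subset V_{i}\}$, where $\rho$ forgets $V'_{i}$ (a line in the $2$-dimensional space $V_{i}/V''_{i}$) and is thus a $\PP^{1}$-bundle onto its image. That line bundle restricts on the $\PP^{1}$-fibre to $\Oo(m)$ with $m$ a linear function of $r-s$, and since $H^{*}(\PP^{1},\Oo(m))$ is nonzero only in degree $0$ (for $m\geq0$) or degree $1$ (for $m\leq-2$) and vanishes for $m=-1$, one reads off exactly the trichotomy (6)(a): an isomorphism with $\Ee_{i,s+1}\bo_{\kk}\ast\Ee_{i,r}\bo_{\kk}$ up to a shift $[\mp1]$, or vanishing. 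The triangle (6)(b) comes from a short exact sequence of sheaves on the fibre product of the $i$-th and $(i{+}1)$-st correspondences, reflecting the stratification into the generic locus and the special locus where the modified subquotients at steps $i$ and $i+1$ interact; (6)(c) is disjointness of supports for $|i-j|\geq2$; and (7) is the mirror image under transposing correspondences.

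The crux, and the step I expect to be the main obstacle, is condition (11), the categorical $[e_{i},f_{i}]$-relation. To compute $\Ee_{i,r}\bo_{\kk-\alpha_{i}}\ast\bo_{\kk}\Ff_{i,s}$ and $\bo_{\kk}\Ff_{i,s}\ast\Ee_{i,r}\bo_{\kk}$ one must analyse the fibre products of the $i$-th correspondence with its transpose over the relevant partial flag variety. These are \emph{not} of expected dimension: each splits into a diagonal component $\Delta(Y(\kk))$ and a generic component, the two meet, and so the derived fibre product carries an excess bundle and must be resolved by a Koszul complex. The output sits in an exact triangle whose outer terms are $\F_{i,s}\E_{i,r}\bo_{\kk}$ and $\E_{i,r}\F_{i,s}\bo_{\kk}$ (in the order dictated by the adjunction (co)units) and whose third term is the diagonal contribution $\Delta(\kk)_{*}$ of a single line bundle in one homological degree; that line bundle and degree are read off from $H^{*}(\PP^{1},\Oo(m))$ on a $\PP^{1}$-fibre, with $m$ now depending on $r+s$, $k_{i}$ and $k_{i+1}$. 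This reproduces the five-way split of relation (\ref{U09}), i.e.\ the trichotomy (11)(a)--(c): for $r+s=k_{i+1}$ the surviving diagonal term is $\Psi^{+}_{i}\bo_{\kk}$ after matching the built-in shift $[\pm(1-k_{i+1})]$; for $r+s=-k_{i}$ it is $\Psi^{-}_{i}\bo_{\kk}$ after matching $[\pm(1-k_{i})]$; and for $-k_{i}+1\leq r+s\leq k_{i+1}-1$ it vanishes, so the two outer functors are isomorphic. The difficulty is precisely this excess-intersection analysis: one must check that the derived tensor product, the Koszul differential, and the adjunction units together assemble into a genuine exact triangle rather than merely its outer cohomology objects, and that the numerology of determinant twists and homological shifts lands exactly on the normalisations built into $\Psi^{\pm}_{i}$. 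I expect the cleanest implementation is to establish the identity fibrewise over the relevant $\PP^{1}$ --- essentially the rank-one, one-reflection situation, namely the geometric categorical $\SL_{2}$-action twisted by line bundles --- and then propagate it over all of $Y(\kk)$ by flat base change.
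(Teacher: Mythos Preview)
The paper does not give its own proof of this proposition: it is quoted verbatim as \cite[Theorem 5.2]{Hsu} and used as a black box. So there is no proof in the present paper to compare against, and your opening sentence already reflects this correctly.

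That said, your reconstruction is the standard and correct architecture for this kind of result (and is the one actually carried out in \cite{Hsu}, in the spirit of the Cautis--Kamnitzer--Licata geometric categorical actions): verify conditions (1)--(11) of Definition \ref{catshifted0} one at a time by computing convolutions of the explicit FM kernels, using that the projections off $W^{1}_{i}(\kk)$ are projective bundles so that all pushforwards reduce to the cohomology of $\Oo(m)$ on projective space; obtain the adjunctions (4) via Grothendieck--Verdier duality along the closed embedding $\iota(\kk)$ and the identification of its conormal bundle; and for (11), analyse the non-transverse fibre product of $W^{1}_{i}(\kk)$ with its transpose, where the diagonal component produces the $\Psi^{\pm}_{i}$ term in the triangle and the generic component gives the swapped $\E\F$/$\F\E$. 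Your remark that the delicate point is assembling a genuine exact triangle rather than just an isomorphism at the level of cohomology sheaves is exactly right.

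One small slip: where you write ``Conditions (9) and (10) are shorter versions of the same: convolving with $(\Psi^{\pm}_{i})^{\pm1}\bo_{\kk}$ twists $\V_{i}/\V'_{i}$ \ldots'', you mean conditions (8) and (9), which are the $\Psi$--$\E$ and $\Psi$--$\F$ commutation relations. Condition (10) is instead the isomorphism $\E_{i,r}\F_{j,s}\bo_{\kk}\cong\F_{j,s}\E_{i,r}\bo_{\kk}$ for $i\neq j$, which is proved by showing that the two fibre products computing the convolutions in either order are canonically isomorphic (the $i$-th and $j$-th modifications do not interact), not by a line-bundle twist.
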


\subsection{Application to affine 0-Hecke algebra} \label{subsectionapplicationtoaffine0Hecke}

In this subsection, we recall the second main result in \cite{Hsu}, which is an application of Proposition \ref{Thm catact} to  construct a categorical action of $\Hh_{N}(0)$ on $\Dd^b(G/B)$.

\begin{proposition}\cite[Theorem 6.6]{Hsu}\label{catactaffine0Hecke} 
There is a categorical action of the affine 0-Hecke algebra $\Hh_{N}(0)$ on $\Dd^b(G/B)$ where we lift the action of $\delta_i$ and $a_j$ in Corollary \ref{actaffineoHecke} to functors $\brmT_i$ and $\brmX_j$ that are FM transforms. More precisely, we define $\Tt_{i}=\Oo_{G/B \times_{G/P_{i}} G/B}$ and $\Xx_{j}=\Delta_{*}(\V_{j}/\V_{j-1})$ to be the kernel for $\brmT_i$ and $\brmX_j$ respectively for $1 \leq i \leq N-1$, $1 \leq j \leq N$. Then we have the following categorical relations
\begin{equation}  \label{6.1}
\Tt_{i} \ast \Tt_{i} \cong \Tt_{i},
\end{equation}
\begin{equation}  \label{6.2}
\Tt_{i}\ast \Tt_{i+1} \ast \Tt_{i} \cong \Tt_{i+1} \ast \Tt_{i} \ast \Tt_{i+1}, \
\Tt_{i}\ast \Tt_{j} \cong \Tt_{j}\ast \Tt_{i} \ \text{if} \ |i-j| \geq 2,
\end{equation}
\begin{equation} \label{6.3}
\Tt_{i}\ast \Xx_{j}\cong \Xx_{j} \ast \Tt_{i} \ \text{if} \ j \neq i,i+1,
\end{equation}
We have the following exact triangles in $\Dd^b(G/B \times G/B)$
\begin{equation}  \label{6.4}
\Tt_{i}\ast \Xx_{i} \rightarrow \Xx_{i+1}\ast \Tt_{i} \rightarrow \Xx_{i+1}, 
\end{equation}
\begin{equation} \label{6.5}
	 \Xx_{i} \ast \Tt_{i} \rightarrow  \Tt_{i} \ast \Xx_{i+1}  \rightarrow \Xx_{i+1}.
\end{equation} 
\end{proposition}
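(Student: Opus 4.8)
The plan is to realise $\brmT_i$ and $\brmX_j$ as Fourier--Mukai transforms with the asserted kernels and then to verify the five families of relations (\ref{6.1})--(\ref{6.5}) --- the categorical lifts of the defining relations (\ref{H01})--(\ref{H05}) of $\Hh_{N}(0)$ --- one family at a time, working on $G/B$, the partial flag varieties, and the (iterated) fibre products between them. The first step is the identification of kernels. For $\brmX_j = (-)\otimes\LL_j$ with $\LL_j=\V_j/\V_{j-1}$ the kernel is evidently $\Delta_*\LL_j=\Xx_j$. For $\brmT_i=\pi_i^*\pi_{i*}$, the kernel of $\pi_{i*}$ is the structure sheaf of the graph of $\pi_i$ and that of $\pi_i^*$ is the structure sheaf of the transposed graph; convolving them gives the structure sheaf of the fibre product, because $\pi_i$ is a smooth $\PP^1$-fibration and hence the defining square is Tor-independent. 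Thus $\brmT_i\cong\Phi_{\Tt_i}$ with $\Tt_i=\Oo_{G/B\times_{G/P_i}G/B}$. Writing $Z_i:=G/B\times_{G/P_i}G/B$, with $\iota\colon Z_i\hookrightarrow G/B\times G/B$, projections $p_1,p_2\colon Z_i\to G/B$ and $q\colon Z_i\to G/P_i$, one has $\Phi_{\Tt_i}(\Ff)=p_{2*}p_1^*\Ff$, and by the projection formula the kernel of $\Ff\mapsto p_{2*}(p_1^*\Ff\otimes\mathcal M)$ is $\iota_*\mathcal M$ for any $\mathcal M$ on $Z_i$.

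Relations (\ref{6.1}) and (\ref{6.2}) I would prove geometrically. For (\ref{6.1}): $\Tt_i\ast\Tt_i$ is $\iota_*$ applied to the derived pushforward of $\Oo$ along the contraction of the middle factor $Z_i\times_{G/B}Z_i\to Z_i$, which is again a $\PP^1$-fibration, so this pushforward is $\Oo_{Z_i}$ and $\Tt_i\ast\Tt_i\cong\Tt_i$; this is the kernel incarnation of $\pi_{i*}\pi_i^*\cong\mathrm{id}$, itself a consequence of $R\pi_{i*}\Oo_{G/B}\cong\Oo_{G/P_i}$. For (\ref{6.2}): let $P_{ij}\supset B$ be the parabolic attached to $\{s_i,s_{i+1}\}$. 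Then $Z_{ij}:=G/B\times_{G/P_{ij}}G/B$ is a fibre bundle over $G/P_{ij}$ with smooth fibre (a product of two copies of the $\SLL_3$ full flag variety), hence smooth, and the iterated Bott--Samelson variety $G/B\times_{G/P_i}G/B\times_{G/P_{i+1}}G/B\times_{G/P_i}G/B$ maps onto $Z_{ij}$ by a proper birational morphism $\sigma$; since its source is smooth and $Z_{ij}$ is smooth, $R\sigma_*\Oo\cong\Oo_{Z_{ij}}$, and as all intermediate structure maps are flat the convolution $\Tt_i\ast\Tt_{i+1}\ast\Tt_i$ equals $\iota_*\Oo_{Z_{ij}}$. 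The same computation for the reduced word $(i+1,i,i+1)$ gives the identical answer, which is the braid relation; for $|i-j|\ge 2$ the relevant fibre product is a product of two independent $\PP^1$-fibrations and the commutation is immediate.

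For the Bernstein--Lusztig relations (\ref{6.3})--(\ref{6.5}) I would use the geometry of $Z_i$. One has $Z_i=\PP(\pi_i^*\mathcal W)$ over $G/B$, with $\mathcal W:=\V_{i+1}/\V_{i-1}$ a rank-two bundle pulled back from $G/P_i$ and $0\to\LL_i\to\pi_i^*\mathcal W\to\LL_{i+1}\to 0$ the tautological sequence on $G/B$; consequently $p_1^*\LL_j\cong p_2^*\LL_j$ for $j\neq i,i+1$ and $p_1^*(\LL_i\otimes\LL_{i+1})\cong q^*\tdet\mathcal W\cong p_2^*(\LL_i\otimes\LL_{i+1})$. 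The first isomorphism and the projection formula give $\Tt_i\ast\Xx_j\cong\iota_*(p_1^*\LL_j)=\iota_*(p_2^*\LL_j)\cong\Xx_j\ast\Tt_i$, which is (\ref{6.3}). For (\ref{6.4}) and (\ref{6.5}), the diagonal copy $j\colon\Delta_{Z_i}\cong G/B\hookrightarrow Z_i$ is the zero locus of a section of a line bundle, so one gets the short exact sequence
\begin{equation*}
0\longrightarrow p_2^*\LL_i\otimes p_1^*\LL_{i+1}^{\vee}\longrightarrow\Oo_{Z_i}\longrightarrow\Oo_{\Delta_{Z_i}}\longrightarrow 0
\end{equation*}
on $Z_i$, where the sub-line-bundle is computed from the tautological sequence of $\PP(\pi_i^*\mathcal W)$ together with the identity for $\tdet\mathcal W$. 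By the projection formula one has $\Tt_i\ast\Xx_i\cong\iota_*(p_1^*\LL_i)$, $\Xx_{i+1}\ast\Tt_i\cong\iota_*(p_2^*\LL_{i+1})$, $\Xx_i\ast\Tt_i\cong\iota_*(p_2^*\LL_i)$, $\Tt_i\ast\Xx_{i+1}\cong\iota_*(p_1^*\LL_{i+1})$ and $\Xx_{i+1}\cong\iota_*j_*\LL_{i+1}$; tensoring the displayed sequence with $p_2^*\LL_{i+1}$ and applying $\iota_*$ then yields exactly the exact triangle (\ref{6.4}), and tensoring with $p_1^*\LL_{i+1}$ yields (\ref{6.5}). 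Alternatively, using (\ref{dem}) and the identifications $\Xx_{i+1}\bo_{(1,\dots,1)}\cong\Psi^+_i\bo_{(1,\dots,1)}$, $\Xx_i\bo_{(1,\dots,1)}\cong(\Psi^-_i)^{-1}\bo_{(1,\dots,1)}$ (the shifts in Proposition~\ref{Thm catact} vanish since $k_i=k_{i+1}=1$ on $G/B$), one may instead deduce (\ref{6.1}) from the $\E\F$-commutator triangle of Definition~\ref{catshifted0}(11) degenerating at the edge weight $(1,\dots,1)-\alpha_i$ (using condition (2)), (\ref{6.3}) from the distant $\Psi$-commutation relations of Definition~\ref{catshifted0}(8)--(9), and (\ref{6.4})--(\ref{6.5}) from condition (11) together with the $\Psi$-conjugation relations (8)(a),(9)(a).

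The only genuinely non-formal input I expect to need is in (\ref{6.2}): one must know that the rank-two Bott--Samelson contraction $\sigma$ is birational onto the smooth variety $Z_{ij}$, so that $R\sigma_*\Oo$ collapses to $\Oo_{Z_{ij}}$ --- equivalently, the classical fact that the Demazure operator $\delta_{w_{0,J}}$ for a rank-two parabolic is realised by $\rho^*\rho_*$ along the projection $\rho\colon G/B\to G/P_{ij}$. Everything else is the routine assembly of standard facts (base change, the projection formula, $\PP^1$-bundle cohomology, Tor-independence along flat maps), plus the modest bookkeeping in (\ref{6.4})--(\ref{6.5}) of matching each of the four twisted kernels with the correct twist of the tautological sequence on $Z_i$.
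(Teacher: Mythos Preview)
Your proposal is correct, but the primary route you take is genuinely different from the one the paper outlines. The paper does \emph{not} argue geometrically on Bott--Samelson varieties or via the short exact sequence on $Z_i$; instead it explicitly says that ``instead of giving a proof by direct computation'' the key is to rewrite $\Tt_i$ and $\Xx_j$ in terms of the categorical $\dot{\brmU}_{0,N}(L\SL_N)$-action kernels (the identifications $\Tt_i\cong(\Ee_{i,0}\ast\Ff_{i,0})\bo_{(1,\dots,1)}$ and $\Xx_i\cong\Psi^+_{i-1}\bo_{(1,\dots,1)}\cong(\Psi^-_i)^{-1}\bo_{(1,\dots,1)}$) and then read off the relations from the axioms of Definition~\ref{catshifted0}. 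In particular, (\ref{6.4}) and (\ref{6.5}) are shown to be literally the commutator triangles of condition (11)(a)(b), and the braid relation (\ref{6.2}) is obtained in \cite{Hsu} by a page of manipulations inside the categorical action (the present paper later gives a shorter proof in Corollary~\ref{shortcatbraid} using the main theorem on complementary idempotents). You do mention this $\Uu$-action route as an alternative for (\ref{6.1}), (\ref{6.3})--(\ref{6.5}), and your sketch of it is accurate; but for (\ref{6.2}) you only give the Bott--Samelson/rational-singularities argument, which is a different proof from either one in the paper.

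What each buys: your geometric argument is self-contained and classical --- it needs only the smoothness of $Z_{ij}=G/B\times_{G/P_{ij}}G/B$ and the char-$0$ fact that a proper birational map from a smooth variety to a smooth variety has $R\sigma_*\Oo\cong\Oo$. It avoids the categorical machinery entirely. The paper's route, by contrast, demonstrates that the affine $0$-Hecke relations are \emph{consequences} of the shifted $0$-affine relations, which is the conceptual point the paper wants to make (and is what later allows the complementary-idempotent reformulation). So your proof is perfectly valid as a proof of the proposition, but it bypasses precisely the structural insight the paper is trying to highlight.
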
 

Instead of giving a proof by direct computation, the key insight we used in \cite{Hsu} to prove it is by writing the functors $\brmT_i$ and $\brmX_j$ in terms of functors in the categorical action of shifted 0-affine algebra $\Uu=\dot{\brmU}_{0,N}(L\SL_N)$.

We use the following observation. By the notation (\ref{npfl}), we have the full flag variety $G/B$ is the same as $\Fl_{(1,1,...,1)}(\CC^N)$. Similarly, for the partial flag varieties $G/P_{i}$ given in (\ref{partialflag}), we have $G/P_{i}=\Fl_{(1,1,...,1) + \alpha_{i}}(\CC^N)=\Fl_{(1,1,...,1) - \alpha_{i}}(\CC^N)$. Here $\alpha_{i}$ is the simple root $(0...0,-1,1,0...0)$ where the $-1$ is in the $i$th position for $1 \leq i \leq N-1$. Thus we obtain the following diagram
\begin{equation*}
\xymatrix@C=4em{
	\Dd^b(\Fl_{{(1,1,...,1)-\alpha_{i}}}(\CC^N))   \ar@/^/[r]^{{\E}_{i,r}1_{(1,1,...,1)-\alpha_{i}}}   
	& \Dd^b(\Fl_{{(1,1,...,1)}}(\CC^N))   \ar@/^/[r]^{{\E}_{i,r}1_{(1,1,...,1)}}   \ar@/^/[l]^{{\F}_{i,s}1_{(1,1,...,1)}}
	& \Dd^b(\Fl_{{(1,1,...,1)+\alpha_{i}}}(\CC^N)).   \ar@/^/[l]^{{\F}_{i,s}1_{(1,1,...,1)+\alpha_{i}}}  
	}
\end{equation*} 

Then from Section 6 of \cite{Hsu}, we have the following isomorphisms of FM kernels
\begin{align}
\Tt_{i} &\cong (\Ee_{i,0} \ast \Ff_{i,0})\bo_{(1,1,...,1)} \cong (\Ff_{i,0} \ast \Ee_{i,0})\bo_{(1,1,...,1)} \label{eq p1} \\
&\cong (\Ee_{i,0} \ast \Ff_{i,1} \ast (\Psi^{+}_{i})^{-1})\bo_{(1,1,...,1)} \cong (\Ff_{i,0} \ast \Ee_{i,-1} \ast (\Psi^{-}_{i})^{-1})\bo_{(1,1,...,1)}\label{eq p2} 
\end{align} for all $1 \leq i \leq N-1$. Similarly, $\Xx_{i} \cong \Psi^{+}_{i-1}\bo_{(1,1,...,1)} \cong (\Psi^{-}_{i})^{-1}\bo_{(1,1,...,1)}$ for all $1 \leq i \leq N$.

From conditions (11)(a) and (11)(b), we have the following two exact triangles
\begin{align}
&(\Ff_{i,1}\ast \Ee_{i,0}) \bo_{(1,1,...,1)} \rightarrow (\Ee_{i,0} \ast \Ff_{i,1})\bo_{(1,1,....,1)} \rightarrow \Psi_{i}^{+}\bo_{(1,1,...,1)}, \label{commu1}\\
& (\Ee_{i,-1}\ast \Ff_{i,0}) \bo_{(1,1,...,1)} \rightarrow (\Ff_{i,0} \ast \Ee_{i,-1})\bo_{(1,1,....,1)} \rightarrow \Psi_{i}^{-}\bo_{(1,1,...,1)}. \label{commu2}
\end{align}

Using (\ref{eq p1}) and (\ref{eq p2}), it is standard to check that 
\begin{align*}
&(\Ee_{i,0} \ast \Ff_{i,1})\bo_{(1,1,....,1)} \cong(\Ee_{i,0} \ast \Ff_{i,0} \ast \Psi_{i}^{+})\bo_{(1,1,....,1)} \cong \Tt_{i} \ast \Xx_{i+1},   \\
&(\Ff_{i,1}\ast \Ee_{i,0}) \bo_{(1,1,...,1)} \cong ((\Psi^{-}_{i})^{-1} \ast \Ff_{i,0}\ast \Ee_{i,0}) \bo_{(1,1,...,1)} \cong   \Xx_{i}  \ast \Tt_{i}. 
\end{align*}  Thus (\ref{commu1}) is equivalent to the categorical Bernstein-Lusztig relation (\ref{6.5}). A similar check tells us that (\ref{commu2}) is equivalent to relation (\ref{6.4}).

Hence, the above calculations tell us that for the categorical $\Uu=\dot{\brmU}_{0,N}(L\SL_N)$-action on $\Dd^b(G/B) \bigoplus \bigoplus_{i=1}^{N-1}\Dd^b(G/P_{i})$, the categorical commutator relations in condition (11)(a) and (11)(b) from Definition \ref{catshifted0} are equivalent to the categorified Bernstein-Lusztig relations (\ref{6.4}) and (\ref{6.5}).

For the rest relations, most of them from (\ref{6.1}) to (\ref{6.3}) are standard to verify. The difficult one is the braid relation in (\ref{6.2}). In \cite{Hsu}, we prove the braid relation by using relations in the categorical action of $\dot{\Uu}_{0,N}(L\SL_{N})$, which still involves many calculations. In the next section, we will use the main result to provide a second proof of the braid relation, which drastically reduces the calculations.

\section{Main result: Categorical idempotents} \label{section 5}

In this section, we prove the main result of this article, i.e., a categorical $\Uu$-action gives two families of pair of complementary idempotents for each weight category. The main idea is to generalize the interpretation of the categorified Demazure operators $\brmT_i$ in terms of functors in the categorical $\dot{\brmU}_{0,N}(L\SL_N)$-action in Subsction \ref{subsectionapplicationtoaffine0Hecke} to general $\dot{\brmU}_{0,N}(L\SL_n)$ case.

\subsection{Some observations}

The study of the full flag variety $G/B$ in Subsection \ref{subsectionapplicationtoaffine0Hecke} is our main motivation. Recall the categorified Bernstein-Lusztig relations (\ref{6.4}) and (\ref{6.5}) 
\begin{align*}
 &\Tt_{i}\ast \Xx_{i} \rightarrow \Xx_{i+1}\ast \Tt_{i} \rightarrow \Xx_{i+1}, \\ 
&\Xx_{i} \ast \Tt_{i} \rightarrow  \Tt_{i} \ast \Xx_{i+1}  \rightarrow \Xx_{i+1}.   
\end{align*} Since $\Xx_{i+1}$ are invertible, convolving with $\Xx^{-1}_{i+1}$ and shift them by $[1]$, we get the following exact triangles
\begin{align}
&\Tt_{i} \rightarrow \Oo_{\Delta} \rightarrow \Xx^{-1}_{i+1} \ast \Tt_{i} \ast \Xx_{i}[1], \label{idem1} \\ 
&\Tt_{i}   \rightarrow \Oo_{\Delta} \rightarrow \Xx_{i} \ast \Tt_{i} \ast \Xx^{-1}_{i+1}[1], \label{idem2}   
\end{align} where $\Oo_{\Delta}$ is the structure sheaf of the diagonal in $G/B \times G/B$.

From relation (\ref{6.1}), we know that $\Tt_i$ is a weak idempotent (see Definition \ref{weakidem}) in the triangulated monoidal category $\Dd^b(G/B \times G/B)$, where the monoidal structure is given by the convolution of FM kernels and the monoidal identity is $\Oo_{\Delta}$. Thus (\ref{idem1}) and (\ref{idem2}) are two idempotent triangles (see Definition \ref{compleidem}), and
$(\Tt_i,\Oo_{\Delta})$ is a counital idempotent for all $1 \leq i \leq N-1$. The two exact triangles (\ref{idem1}) and (\ref{idem2}) also imply that $\Xx^{-1}_{i+1} \ast \Tt_{i}\ast \Xx_{i} \cong \Xx_{i} \ast \Tt_{i} \ast \Xx^{-1}_{i+1}$.

On the other hand, since the two exact triangles (\ref{commu1}) and (\ref{commu2}) are equivalent to the categorified Bernstein-Lusztig relations (\ref{6.4}) and (\ref{6.5}), in terms of the categorical $\dot{\brmU}_{0,N}(L\SL_N)$-action (\ref{idem1}) and (\ref{idem2}) are equivalent to the following 
\begin{align*}
&(\Ee_{i,0} \ast \Ff_{i,1} \ast (\Psi_{i}^{+})^{-1})\bo_{(1,1,....,1)} \rightarrow \Oo_{\Delta} \rightarrow (\Ff_{i,1}\ast \Ee_{i,0} \ast (\Psi_{i}^{+})^{-1})\bo_{(1,1,...,1)}[1] \\
& (\Ff_{i,0} \ast \Ee_{i,-1} \ast (\Psi_{i}^{-})^{-1})\bo_{(1,1,....,1)} \rightarrow \Oo_{\Delta} \rightarrow (\Ee_{i,-1}\ast \Ff_{i,0}\ast (\Psi_{i}^{-})^{-1}) \bo_{(1,1,...,1)}[1].   
\end{align*} 

This suggests us that the following two pairs
\begin{align*}
&({\E}_{i,0}{\F}_{i,1}({\spi}_{i}^{+})^{-1}\bo_{(1,1,...,1)}, {\F}_{i,1}{\E}_{i,0}({\spi}_{i}^{+})^{-1}\bo_{(1,1,...,1)}[1]) \\
&({\F}_{i,0}{\E}_{i,-1}({\spi}_{i}^{-})^{-1}\bo_{(1,1,...,1)}, {\E}_{i,-1}{\F}_{i,0}({\spi}_{i}^{-})^{-1}\bo_{(1,1,...,1)}[1])
\end{align*} are complementary idempotents in an abstract categorical $\dot{\brmU}_{0,N}(L\SL_N)$-action $\Kk$. Indeed, if we define 
\begin{equation} \label{T'1}
{\TTt}'_{i}\bo_{(1,1,...,1)}:={\E}_{i,0}{\F}_{i,1}({\spi}^{+}_{i})^{-1}\bo_{(1,1,...,1)}.
\end{equation} Then we calculate 
\begin{align} \label{eq 9}
\begin{split}       ({\TTt}'_{i})^{2}\bo_{(1,1,...,1)}&={\E}_{i,0}{\F}_{i,1}({\spi}^{+}_{i})^{-1}{\E}_{i,0}{\F}_{i,1}({\spi}^{+}_{i})^{-1}\bo_{(1,1,...,1)} \\
	&\cong {\E}_{i,0}{\F}_{i,1}{\E}_{i,-1}({\spi}^{+}_{i})^{-1}{\F}_{i,1}({\spi}^{+}_{i})^{-1}\bo_{(1,1,...,1)}[1] \\
	& \cong {\E}_{i,0}({\spi}_{i})({\spi}^{+}_{i})^{-1}{\F}_{i,1}({\spi}^{+}_{i})^{-1}\bo_{(1,1,...,1)}[1-1] \\
	& \cong {\E}_{i,0}{\F}_{i,1}({\spi}^{+}_{i})^{-1}\bo_{(1,1,...,1)} ={\TTt}'_{i}\bo_{(1,1,...,1)}
	\end{split}
\end{align} where the first isomorphism comes from condition (8)(a) in Definition \ref{catshifted0}. On the other hand, we define 
\begin{equation} \label{T''1}
	{\TTt}''_{i}\bo_{(1,1,...,1)} \coloneqq {\F}_{i,0}{\E}_{i,-1}({\spi}^{-}_{i})^{-1}\bo_{(1,1,...,1)},
\end{equation} and a similar calculation tells us that it satisfies $({\TTt}''_{i})^2\bo_{(1,1,...,1)} \cong {\TTt}''_{i}\bo_{(1,1,...,1)}$. 

Hence both ${\TTt}'_{i}\bo_{(1,1,...,1)}$ and ${\TTt}''_{i}\bo_{(1,1,...,1)}$ are weak idempotents, and we have the following idempotent triangles in $\Hom(\Kk(1,1,...,1),\Kk(1,1,...,1))$.
\begin{align}
&{\TTt}'_{i}\bo_{(1,1,...,1)}={\E}_{i,0}{\F}_{i,1}({\spi}_{i}^{+})^{-1}\bo_{(1,1,....,1)} \rightarrow \bo_{(1,1,...,1)} \rightarrow {\F}_{i,1}{\E}_{i,0}({\spi}_{i}^{+})^{-1} \bo_{(1,1,...,1)}[1] , \label{eq 10}\\
& {\TTt}''_{i}\bo_{(1,1,...,1)}={\F}_{i,0}{\E}_{i,-1}({\spi}_{i}^{-})^{-1} \bo_{(1,1,....,1)} \rightarrow \bo_{(1,1,...,1)} \rightarrow {\E}_{i,-1}{\F}_{i,0}({\spi}_{i}^{-})^{-1} \bo_{(1,1,...,1)}[1]. \label{eq 11}
\end{align}

\begin{remark}
Note that even though we have the isomorphism (\ref{eq p2}), this does not implies that  ${\E}_{i,0}{\F}_{i,1}({\spi}^{+}_{i})^{-1}\bo_{(1,1,...,1)} \cong {\F}_{i,0}{\E}_{i,-1}({\spi}^{-}_{i})^{-1}\bo_{(1,1,...,1)}$ since the categorical action now is on an abstract category $\Kk$. It holds only on the geometric example $\Kk((1,1,...,1))=\Dd^b(G/B)$.
\end{remark}

For convenience, we define the extra idempotents in (\ref{eq 10}) and (\ref{eq 11}) to be  
\begin{align}
{\SSs}'_{i}\bo_{(1,1,...,1)}& \coloneqq {\F}_{i,1}{\E}_{i,0}({\spi}_{i}^{+})^{-1} \bo_{(1,1,...,1)}[1], \\ {
\SSs}''_{i}\bo_{(1,1,...,1)}& \coloneqq {\E}_{i,-1}{\F}_{i,0}({\spi}_{i}^{-})^{-1} \bo_{(1,1,...,1)}[1],
\end{align} then the idempotent property of ${\TTt}'_{i}\bo_{(1,1,...,1)}$ and ${\TTt}''_{i}\bo_{(1,1,...,1)}$ implies that both ${\SSs}'_{i}\bo_{(1,1,...,1)}$ and ${\SSs}''_{i}\bo_{(1,1,...,1)}$ are also weak idempotents. 

We summarize the above discussion in the following corollary.  
\begin{corollary} \label{compidemcat}
Given a categorical $\dot{\brmU}_{0,N}(L\SL_N)$-action $\Kk$. There are two families of complementary idempotents in the functor category $\Hom(\Kk(1,1,...,1),\Kk(1,1,...,1))$, which is a triangulated category by Definition \ref{catshifted0}, and has a monoidal structure given by the composition of functors. They are
\begin{align*}
  &({\TTt}'_{i}\bo_{(1,1,...,1)},{\SSs}'_{i}\bo_{(1,1,...,1)})=({\E}_{i,0}{\F}_{i,1}({\spi}_{i}^{+})^{-1}\bo_{(1,1,....,1)},{\F}_{i,1}{\E}_{i,0}({\spi}_{i}^{+})^{-1} \bo_{(1,1,...,1)}[1]),  \\
  &({\TTt}''_{i}\bo_{(1,1,...,1)},{\SSs}''_{i}\bo_{(1,1,...,1)})=({\F}_{i,0}{\E}_{i,-1}({\spi}_{i}^{-})^{-1} \bo_{(1,1,....,1)},{\E}_{i,-1}{\F}_{i,0}({\spi}_{i}^{-})^{-1} \bo_{(1,1,...,1)}[1])
\end{align*}  for $1 \leq i \leq N-1$. 
\end{corollary}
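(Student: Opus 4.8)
The plan is to verify the two defining properties of a pair of complementary idempotents (Definition \ref{compleidem}) for each of the two pairs: mutual orthogonality of the two objects under composition, and the existence of an idempotent exact triangle linking them through $\bo_{(1,1,...,1)}$. The computation in \eqref{eq 9} already shows $({\TTt}'_i)^2\bo_{(1,1,...,1)} \cong {\TTt}'_i\bo_{(1,1,...,1)}$, so ${\TTt}'_i\bo_{(1,1,...,1)}$ is a weak idempotent; I would record the analogous calculation for ${\TTt}''_i\bo_{(1,1,...,1)}$, using condition (8)(a) together with the relevant commutation of $\spi^{-}_i$ past ${\F}_{i,0}$ from condition (10)(a), exactly mirroring \eqref{eq 9}.

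Next I would produce the idempotent triangles \eqref{eq 10} and \eqref{eq 11}. These come directly from conditions (11)(a) and (11)(b) in Definition \ref{catshifted0}: starting from the exact triangle ${\F}_{i,1}{\E}_{i,0}\bo_{(1,1,...,1)} \to {\E}_{i,0}{\F}_{i,1}\bo_{(1,1,...,1)} \to {\spi}^{+}_i\bo_{(1,1,...,1)}$ (the case $r+s = k_{i+1} = 1$ with $\kk = (1,1,\dots,1)$, so $k_{i+1}=1$), I compose the whole triangle on the right with $({\spi}^{+}_i)^{-1}\bo_{(1,1,...,1)}$ — which is an invertible $1$-morphism by condition (5), hence an autoequivalence, hence exact-triangle-preserving — and rotate, to get the triangle with third term ${\F}_{i,1}{\E}_{i,0}({\spi}^{+}_i)^{-1}\bo_{(1,1,...,1)}[1] = {\SSs}'_i\bo_{(1,1,...,1)}$ and middle term $\bo_{(1,1,...,1)}$. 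Likewise condition (11)(b) at $r+s = -k_i = -1$ gives \eqref{eq 11}. It then remains to check the orthogonality ${\TTt}'_i\ast{\SSs}'_i \cong 0 \cong {\SSs}'_i\ast{\TTt}'_i$; by Theorem 4.5 of \cite{Ho} (cited in the excerpt) a weak idempotent sitting in an exact triangle $\bT \to \bo \to \bS$ automatically has $\bS$ a complementary idempotent with the required orthogonality, so once the triangle and the weak-idempotent property are in hand the orthogonality is free. Alternatively one computes it by hand: e.g. ${\TTt}'_i{\SSs}'_i\bo_{(1,1,...,1)} = {\E}_{i,0}{\F}_{i,1}({\spi}^{+}_i)^{-1}{\F}_{i,1}{\E}_{i,0}({\spi}^{+}_i)^{-1}\bo_{(1,1,...,1)}[1]$, push the inner $({\spi}^{+}_i)^{-1}$ to the right using condition (10)(a) to merge the two ${\F}$'s into ${\F}_{i,1}{\F}_{i,1}$ (or adjacent indices), and invoke condition (7)(a) in the $r=s$ case to get $0$.

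The same scheme handles ${\TTt}''_i\bo_{(1,1,...,1)}$ and ${\SSs}''_i\bo_{(1,1,...,1)}$ with the roles of ${\E}$ and ${\F}$, and of $\spi^{+}_i$ and $\spi^{-}_i$, interchanged, appealing to conditions (11)(b), (8)(a)-type relations for ${\E}_{i,-1}$, and (9)(a) for the vanishing. I would also note the monoidal structure: $\Hom(\Kk(1,1,...,1),\Kk(1,1,...,1))$ is triangulated by the hypotheses in Definition \ref{catshifted0}, and composition of endofunctors equips it with a monoidal structure with identity $\bo_{(1,1,...,1)}$, so ``pair of complementary idempotents'' makes literal sense here.

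The main obstacle is bookkeeping rather than conceptual: one must track the exact homological shifts and the index shifts on the $e$- and $f$-levels when commuting $({\spi}^{\pm}_i)^{\pm 1}$ past the ${\E}$'s and ${\F}$'s (conditions (9) and (10) in Definition \ref{catshifted0}), and one must be careful that the triangle obtained from conditions (11)(a)/(11)(b) is at the correct weight $\kk = (1,1,\dots,1)$ so that $k_i = k_{i+1} = 1$ and the relevant case ($r+s = k_{i+1}$ resp. $r+s = -k_i$) is precisely the one with $(r,s) = (0,1)$ resp. $(-1,0)$. Once these indices are pinned down, every isomorphism needed is a direct instance of a relation in Definition \ref{catshifted0}, and the weak-idempotent computation is the one already displayed in \eqref{eq 9}.
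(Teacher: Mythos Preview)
Your approach matches the paper's: the corollary is stated as a summary of the discussion preceding it, and the complete verification appears in the proof of Theorem \ref{Theorem 5} (for general $\kk$), which proceeds exactly as you outline---extract the triangle from condition (11)(a)(b), check the weak-idempotent property via the computation \eqref{eq 9}, and verify orthogonality by direct calculation.

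Two corrections. First, a labeling slip: the commutation of $({\spi}^{\pm}_i)^{\pm 1}$ past ${\F}_{i,\bullet}$ is condition (9)(a), not (10)(a); condition (10) is ${\E}_{i,r}{\F}_{j,s} \cong {\F}_{j,s}{\E}_{i,r}$ for $i\neq j$. Second, and more substantively, your appeal to Theorem 4.5 of \cite{Ho} is misplaced. That theorem says the idempotent triangle is determined up to unique isomorphism by the isomorphism class of $\bT$; it does \emph{not} say that a weak idempotent sitting in a triangle $\bT \to \bo \to \bS$ is automatically counital with $\bT\ast\bS \cong 0$. A weak idempotent only carries an abstract isomorphism $\bT\ast\bT \cong \bT$, and without knowing that the specific map $\bT\ast\bT \to \bT$ induced by $\epsilon$ is an isomorphism, orthogonality does not follow from the triangle alone. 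The paper does not take this shortcut: in the proof of Theorem \ref{Theorem 5} it verifies ${\TTt}'_i{\SSs}'_i\bo_{\kk} \cong 0$ and ${\SSs}'_i{\TTt}'_i\bo_{\kk} \cong 0$ directly using (9)(a)+(7)(a) and (8)(a)+(6)(a), which is precisely your ``alternative'' computation. Drop the Hogancamp shortcut and keep the direct argument.
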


The goal of this article is to generalize the above observation to a more general categorical $\dot{\brmU}_{0,N}(L\SL_n)$-action setting.

\subsection{The main result}

In this subsection, we generalize Corollary \ref{compidemcat} to general categorical $\dot{\brmU}_{0,N}(L\SL_n)$-action $\Kk$. More precisely, for each weight category $\Kk(\kk)$, we have to construct certain counital idempotents ${\TTt}'_{i}\bo_{\kk}$ and ${\TTt}''_{i}\bo_{\kk}$. 

With the definition of ${\TTt}'_{i}\bo_{(1,1,...,1)}$ and ${\TTt}''_{i}\bo_{(1,1,...,1)}$ from (\ref{T'1}) and (\ref{T''1}), it is not clear to see a direct generalization to weak idempotents ${\TTt}'_{i}\bo_{\kk}$ and ${\TTt}''_{i}\bo_{\kk}$.

We have to use more relations in the categorical $\Uu$-action. Note that by condition (4) in Definition \ref{catshifted0}, ${\E}_{i,r}$ and ${\F}_{i,s}$ both have left and right adjoints. Using conditions (8)(a) and (9)(a) in Definition \ref{catshifted0} and a simple check tells us that 
\begin{align*}
{\E}_{i,0}\bo_{(1,1,...,1)-\alpha_{i}}({\E}_{i,0}\bo_{(1,1,...,1)-\alpha_{i}})^{R} &\cong {\E}_{i,0}{\F}_{i,1}({\spi}^{+}_{i})^{-1}\bo_{(1,1,...,1)}, \\
{\F}_{i,0}\bo_{(1,1,...,1)+\alpha_{i}} ({\F}_{i,0}\bo_{(1,1,...,1)+\alpha_{i}})^{R} &\cong {\F}_{i,0}{\E}_{i,-1}({\spi}^{-}_{i})^{-1}\bo_{(1,1,...,1)}.
\end{align*} Thus we observe that  
\begin{align}
{\TTt}'_{i}\bo_{(1,1,...,1)} \cong {\E}_{i,0}\bo_{(1,1,...,1)-\alpha_{i}}({\E}_{i,0}\bo_{(1,1,...,1)-\alpha_{i}})^{R}, \label{n1} \\ 
{\TTt}''_{i}\bo_{(1,1,...,1)} \cong {\F}_{i,0}\bo_{(1,1,...,1)+\alpha_{i}}({\F}_{i,0}\bo_{(1,1,...,1)+\alpha_{i}})^{R}. \label{n2}
\end{align} 

On the other hand, we also have 
\begin{equation*}
({\F}_{i,0}\bo_{(1,1,...,1)})^{L} \cong {\E}_{i,0}({\spi}^{+}_{i})^{-1}\bo_{(1,1,...,1)-\alpha_i}[1], \ 
({\E}_{i,0}\bo_{(1,1,...,1)})^{L}  \cong {\F}_{i,0}({\spi}^{-}_{i})^{-1}\bo_{(1,1,...,1)+\alpha_i}[1], 
\end{equation*} which implies that 
\begin{equation*}
(({\spi}^{+}_{i})^{-1}{\F}_{i,0}\bo_{(1,1,...,1)}[1])^{L} \cong {\E}_{i,0}\bo_{(1,1,...,1)-\alpha_{i}}, \ (({\spi}^{-}_{i})^{-1}{\E}_{i,0}\bo_{(1,1,...,1)}[1])^{L} \cong {\F}_{i,0}\bo_{(1,1,...,1)+\alpha_{i}} 
\end{equation*}

Thus another way to write (\ref{n1}) and (\ref{n2}) is
\begin{align}
{\TTt}'_{i}\bo_{(1,1,...,1)} \cong (({\spi}^{+}_{i})^{-1}{\F}_{i,0}\bo_{(1,1,...,1)})^{L}({\spi}^{+}_{i})^{-1}{\F}_{i,0}\bo_{(1,1,...,1)}, \label{n1'} \\ 
{\TTt}''_{i}\bo_{(1,1,...,1)} \cong (({\spi}^{-}_{i})^{-1}{\E}_{i,0}\bo_{(1,1,...,1)})^{L}({\spi}^{-}_{i})^{-1}{\E}_{i,0}\bo_{(1,1,...,1)}. \label{n2'} 
\end{align} 

Similarly, the definition of ${\SSs}'_{i}\bo_{(1,1,...,1)}$ and ${\SSs}''_{i}\bo_{(1,1,...,1)}$ also comes from such construction, i.e.
\begin{align} 
{\SSs}'_{i}\bo_{(1,1,...,1)}& \cong ({\E}_{i,0}({\spi}_{i}^{+})^{-1}\bo_{(1,1,...,1)})^{R}{\E}_{i,0}({\spi}_{i}^{+})^{-1}\bo_{(1,1,...,1)}, \\ 
& \cong {\F}_{i,1}\bo_{(1,1,...,1)+\alpha_i}({\F}_{i,1}\bo_{(1,1,...,1)+\alpha_i})^{L}, \label{m1} \\
{\SSs}''_{i}\bo_{(1,1,...,1)}& \cong ({\F}_{i,0}({\spi}^{-}_{i})^{-1}\bo_{(1,1,...,1)})^{R}{\F}_{i,0}({\spi}^{-}_{i})^{-1}\bo_{(1,1,...,1)}, \\
& \cong {\E}_{i,-1}\bo_{(1,1,...,1)-\alpha_i}({\E}_{i,-1}\bo_{(1,1,...,1)-\alpha_i})^{L}. \label{m2}
\end{align}  

Hence, the above calculation tells us that the weak idempotents ${\TTt}'_i\bo_{(1,1,...,1)}$, ${\TTt}''_i\bo_{(1,1,...,1)}$, ${\SSs}'_i\bo_{(1,1,...,1)}$, and ${\SSs}''_i\bo_{(1,1,...,1)}$ can be constructed by composition of functors with their left/right adjoints.

Now, we move to the general case (in the geometric example, from the full flag variety to the $n$-step partial flag varieties); i.e., we have an abstract categorical $\Uu=\dot{\brmU}_{0,N}(L\SL_{n})$ action on $\Kk$ with $ n < N$. Like the study for the full flag variety case, we have the following picture.
\begin{equation*}
\xymatrix@C=5em{ 
	...
    & \Kk(\kk-\alpha_{i})  \ar@/^/[r]^{{\E}_{i,r}\bo_{\kk-\alpha_i}}
    &\Kk(\kk)   \ar@/^/[r]^{{\E}_{i,r}\bo_{\kk}}  \ar@/^/[l]^{{\F}_{i,s}\bo_{\kk}}
    & \Kk(\kk+\alpha_{i})  \ar@/^/[l]^{{\F}_{i,s}\bo_{\kk+\alpha_i}}
	&....   
}
\end{equation*}

To construct the weak idempotents ${\TTt}'_{i}\bo_{\kk}$ and ${\TTt}''_{i}\bo_{\kk}$ in the triangulated monoidal category $\Hom(\Kk(\kk),\Kk(\kk))$, since ${\E}_{i,r}$ and ${\F}_{i,s}$ both admit left/right adjoints, we imitate the natural construction of the weak idempotents ${\TTt}'_{i}\bo_{(1,1,...,1)}$ and ${\TTt}''_{i}\bo_{(1,1,...,1}$ in $\Hom(\Kk(1,1,...,1),\Kk(1,1,...,1))$ above. In particular, we choose (\ref{n1}) and (\ref{n2}) to define our ${\TTt}'_{i}\bo_{\kk}$ and ${\TTt}''_{i}\bo_{\kk}$ since the definition is simple.

Thus we define the following 
\begin{align} 
{\TTt}'_{i}\bo_{\kk} &\coloneqq {\E}_{i,0}{\F}_{i,k_{i+1}}({\spi}^{+}_{i})^{-1}\bo_{\kk} \cong {\E}_{i,0}\bo_{\kk-\alpha_{i}}({\E}_{i,0}\bo_{\kk-\alpha_{i}})^{R}, \label{Tk'} \\
{\TTt}''_{i}\bo_{\kk} &\coloneqq {\F}_{i,0}{\E}_{i,-k_{i}}({\spi}^{-}_{i})^{-1}\bo_{\kk} \cong 
{\F}_{i,0}\bo_{\kk+\alpha_{i}} ({\F}_{i,0}\bo_{\kk+\alpha_{i}})^{R}.\label{Tk''} 
\end{align}

Similarly, we define ${\SSs}'_{i}\bo_{\kk}$ and ${\SSs}''_{i}\bo_{\kk}$ by (\ref{m1}) and (\ref{m2}).
\begin{align}
&{\SSs}'_{i}\bo_{\kk} \coloneqq {\F}_{i,k_{i+1}}{\E}_{i,0}({\spi}^{+}_{i})^{-1}\bo_{\kk}[1] \cong {\F}_{i,k_{i+1}}\bo_{\kk+\alpha_i}({\F}_{i,k_{i+1}}\bo_{\kk+\alpha_i})^{L}, \label{Sk'} \\
&{\SSs}''_{i}\bo_{\kk} \coloneqq {\E}_{i,-k_{i}}{\F}_{i,0}({\spi}^{-}_{i})^{-1}\bo_{\kk}[1] \cong {\E}_{i,-k_{i}}\bo_{\kk-\alpha_i}({\E}_{i,-k_{i}}\bo_{\kk-\alpha_i})^{L}. \label{Sk''}
\end{align} 

The main result of this article is to show that the functors ${\TTt}'_{i}\bo_{\kk}$, ${\TTt}''_{i}\bo_{\kk}$, ${\SSs}'_{i}\bo_{\kk}$, and ${\SSs}''_{i}\bo_{\kk}$ defined above give pairs of complementary idempotents in $\Hom(\Kk(\kk),\Kk(\kk))$. Furthermore, we will study their relations when acting on the abstract category $\Kk(\kk)$.

Note that from conditions (11)(a)(b) in Definition \ref{catshifted0} we have the following two exact triangles like (\ref{eq 10}) and (\ref{eq 11}) 
\begin{align}
& {\TTt}'_{i}\bo_{\kk}={\E}_{i,0}{\F}_{i,k_{i+1}}({\spi}_{i}^{+})^{-1}\bo_{\kk} \xrightarrow{\epsilon} \bo_{\kk} \xrightarrow{\eta} {\SSs}'_{i}\bo_{\kk}={\F}_{i,k_{i+1}}{\E}_{i,0}({\spi}_{i}^{+})^{-1} \bo_{\kk}[1], \label{triTk'} \\
&  {\TTt}''_{i}\bo_{\kk}={\F}_{i,0}{\E}_{i,-k_{i}}({\spi}_{i}^{-})^{-1} \bo_{\kk} \xrightarrow{\epsilon'} \bo_{\kk} \xrightarrow{\eta'} {\SSs}''_{i}\bo_{\kk}={\E}_{i,-k_{i}}{\F}_{i,0}({\spi}_{i}^{-})^{-1} \bo_{\kk}[1]. \label{TriTk''}
\end{align} However, at this moment we are not sure whether $({\TTt}'_{i},{\SSs}'_{i})$ and $({\TTt}''_{i},{\SSs}''_{i})$ both give pair of complementary idempotents.

\begin{remark}
Note that the constructions of the  functors ${\TTt}'\bo_{\kk}$, ${\SSs}'\bo_{\kk}$, ${\TTt}''\bo_{\kk}$, ${\SSs}''\bo_{\kk}$ are similar to the construction of spherical twist/co-twist in \cite{AL}. However, they are not spherical twist/co-twist functors since we will prove that all of them are weak idempotents in the following theorem. 
\end{remark}

The following is the main result of this article.



\begin{theorem} \label{Theorem 5}
Given a categorical $\dot{\brmU}_{0,N}(L\SL_n)$ action on $\Kk$. Then we have the following two families of pairs of   
complementary idempotents in the triangulated monoidal category $\Hom(\Kk(\kk),\Kk(\kk))$ 
\begin{align*}
&({\TTt}'_{i}\bo_{\kk},{\SSs}'_{i}\bo_{\kk})=({\E}_{i,0}\bo_{\kk-\alpha_{i}}({\E}_{i,0}\bo_{\kk-\alpha_{i}})^{R}, {\F}_{i,k_{i+1}}\bo_{\kk+\alpha_i}({\F}_{i,k_{i+1}}\bo_{\kk+\alpha_i})^{L}) \\
&({\TTt}''_{i}\bo_{\kk},{\SSs}''_{i}\bo_{\kk})=({\F}_{i,0}\bo_{\kk+\alpha_{i}} ({\F}_{i,0}\bo_{\kk+\alpha_{i}})^{R},{\E}_{i,-k_{i}}\bo_{\kk-\alpha_i}({\E}_{i,-k_{i}}\bo_{\kk-\alpha_i})^{L})  
\end{align*} for all $1 \leq i \leq n-1$. Moreover, they satisfy the following relations.
\begin{enumerate}[label=(\Alph*)]
        \item For $|i-j| \geq 2$, we have 
	\begin{equation*}
		{\TTt}'_{i}{\TTt}'_{j}\bo_{\kk} \cong {\TTt}'_{j}{\TTt}'_{i}\bo_{\kk}, \ {\SSs}'_{i}{\SSs}'_{j}\bo_{\kk} \cong {\SSs}'_{j}{\SSs}'_{i}\bo_{\kk}, \
		{\TTt}''_{i}{\TTt}''_{j}\bo_{\kk} \cong {\TTt}''_{j}{\TTt}''_{i}\bo_{\kk}, \ {\SSs}''_{i}{\SSs}''_{j}\bo_{\kk} \cong {\SSs}''_{j}{\SSs}''_{i}\bo_{\kk}.
	\end{equation*}
	\item We have the following vanishing results
	\begin{align*}
		& {\SSs}'_{i+1}{\SSs}'_{i}{\TTt}'_{i+1}{\TTt}'_{i}\bo_{\kk} \cong 	{\SSs}'_{i+1}{\TTt}'_{i}{\TTt}'_{i+1}{\TTt}'_{i}\bo_{\kk}  \cong 	{\SSs}'_{i+1}{\SSs}'_{i}{\SSs}'_{i+1}{\TTt}'_{i}\bo_{\kk}  \cong 0, \\ 
		&{\TTt}'_{i}{\TTt}'_{i+1}{\SSs}'_{i}{\SSs}'_{i+1}\bo_{\kk} \cong 	{\TTt}'_{i}{\SSs}'_{i+1}{\SSs}'_{i}{\SSs}'_{i+1}\bo_{\kk} \cong 	{\TTt}'_{i}{\TTt}'_{i+1}{\TTt}'_{i}{\SSs}'_{i+1}\bo_{\kk} \cong 0, \\
		& {\SSs}''_{i}{\SSs}''_{i+1}{\TTt}''_{i}{\TTt}''_{i+1}\bo_{\kk} \cong 	{\SSs}''_{i}{\TTt}''_{i+1}{\TTt}''_{i}{\TTt}''_{i+1}\bo_{\kk}  \cong 	{\SSs}''_{i}{\SSs}''_{i+1}{\SSs}''_{i}{\TTt}''_{i+1}\bo_{\kk}  \cong 0, \\ 
		&{\TTt}''_{i+1}{\TTt}''_{i}{\SSs}''_{i+1}{\SSs}''_{i}\bo_{\kk} \cong 	{\TTt}''_{i+1}{\SSs}''_{i}{\SSs}''_{i+1}{\SSs}''_{i}\bo_{\kk} \cong 	{\TTt}''_{i+1}{\TTt}''_{i}{\TTt}''_{i+1}{\SSs}''_{i}\bo_{\kk} \cong 0. \\
	\end{align*}
	\item We have the following exact triangles in $\Hom(\Kk(\kk),\Kk(\kk))$. 
	\begin{align*}
		&{\TTt}'_{i}{\TTt}'_{i+1}{\TTt}'_{i}\bo_{\kk} \rightarrow {\TTt}'_{i+1}{\TTt}'_{i}{\TTt}'_{i+1}\bo_{\kk} \rightarrow {\TTt}'_{i+1}{\TTt}'_{i}{\TTt}'_{i+1}{\SSs}'_{i}\bo_{\kk}, \\	
		&{\SSs}'_{i}{\SSs}'_{i+1}{\SSs}'_{i}{\TTt}'_{i+1}\bo_{\kk} \rightarrow {\SSs}'_{i}{\SSs}'_{i+1}{\SSs}'_{i}\bo_{\kk} \rightarrow {\SSs}'_{i+1}{\SSs}'_{i}{\SSs}'_{i+1}\bo_{\kk}, \\
		&{\TTt}''_{i+1}{\TTt}''_{i}{\TTt}''_{i+1}\bo_{\kk} \rightarrow {\TTt}''_{i}{\TTt}''_{i+1}{\TTt}''_{i}\bo_{\kk} \rightarrow {\TTt}''_{i}{\TTt}''_{i+1}{\TTt}''_{i}{\SSs}''_{i+1}\bo_{\kk}, \\
		&{\SSs}''_{i+1}{\SSs}''_{i}{\SSs}''_{i+1}{\TTt}''_{i}\bo_{\kk} \rightarrow {\SSs}''_{i+1}{\SSs}''_{i}{\SSs}''_{i+1}\bo_{\kk} \rightarrow {\SSs}''_{i}{\SSs}''_{i+1}{\SSs}''_{i}\bo_{\kk}. 
	\end{align*}
\end{enumerate}
\end{theorem}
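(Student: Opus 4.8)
The plan is to establish, in order, that the two displayed pairs are pairs of complementary idempotents, then relation (A), then relation (C), and finally relation (B), each step resting on the previous ones together with the defining relations of Definition~\ref{catshifted0}. For the first point, by Definition~\ref{compleidem} — and since the idempotent triangles \eqref{triTk'}, \eqref{TriTk''} are already supplied by conditions (11)(a),(b) of Definition~\ref{catshifted0} — it suffices to verify the mutual orthogonality $\SSs'_i\TTt'_i\bo_\kk\cong 0\cong\TTt'_i\SSs'_i\bo_\kk$ (and the two analogues for the $''$-pair). I would do this by direct manipulation of generators: unfolding $\TTt'_i\bo_\kk$ and $\SSs'_i\bo_\kk$ via \eqref{Tk'} and \eqref{Sk'}, the composite $\SSs'_i\TTt'_i\bo_\kk$ contains the block $({\spi}^{+}_{i})^{-1}\E_{i,0}$, which by condition (8)(a) equals $\E_{i,-1}({\spi}^{+}_{i})^{-1}$ up to a homological shift; this produces a consecutive pair $\E_{i,0}\E_{i,-1}$, which vanishes by the Serre-type relation (6)(a), so $\SSs'_i\TTt'_i\bo_\kk\cong 0$. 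Symmetrically, after moving $({\spi}^{+}_{i})^{-1}$ past an $\F$ using (9)(a), the composite $\TTt'_i\SSs'_i\bo_\kk$ acquires a consecutive pair $\F_{i,k_{i+1}}\F_{i,k_{i+1}+1}$ killed by (7)(a); and the $''$-statements follow by the same argument with the roles of $\E$ and $\F$, and of ${\spi}^{+}_{i}$ and ${\spi}^{-}_{i}$, exchanged. (That $\TTt'_i\bo_\kk$ and $\SSs'_i\bo_\kk$ are moreover weak idempotents is then automatic, by applying $\TTt'_i\bo_\kk\ast(-)$, resp.\ $(-)\ast\SSs'_i\bo_\kk$, to the idempotent triangle.)

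Relation (A) is routine: for $|i-j|\ge 2$ every generator carrying the index $i$ slides past every generator carrying the index $j$ with no homological shift — by conditions (5), (6)(c), (7)(c), (8)(c), (9)(c) and (10) of Definition~\ref{catshifted0} — so one simply shuffles factors in $\TTt'_i\TTt'_j\bo_\kk$ and in the three remaining products. Relation (C) is the computational heart, and the main obstacle. I would expand $\TTt'_i\TTt'_{i+1}\TTt'_i\bo_\kk$ and $\TTt'_{i+1}\TTt'_i\TTt'_{i+1}\bo_\kk$ into the generators $\E_{\bullet,\bullet},\F_{\bullet,\bullet},({\spi}^{\pm}_{\bullet})^{\pm 1}$, use condition (10) to commute each index-$i$ copy of $\E$ (resp.\ $\F$) past the index-$(i{+}1)$ copies of $\F$ (resp.\ $\E$), use conditions (8)(b), (9)(b) to relocate the ${\spi}$'s, and then invoke the adjacent-index exact triangles (6)(b), (7)(b) — together with the vanishings (6)(a), (7)(a) — to pass between the two cyclic orders of the three $\E$'s and of the three $\F$'s; the discrepancy term should assemble into $\TTt'_{i+1}\TTt'_i\TTt'_{i+1}\SSs'_i\bo_\kk$. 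Throughout, the spectral parameters (which depend on $\kk$ through the entries $k_i,k_{i+1}$) and the homological shifts introduced by the conjugation relations must be tracked so that they cancel exactly; this triangle is a homological lift of the $0$-Hecke identity $T_iT_{i+1}T_i=T_{i+1}T_iT_{i+1}T_i$. The $''$-triangles come out the same way, or by the adjoint symmetry interchanging $\E\leftrightarrow\F$ and ${\spi}^{+}\leftrightarrow{\spi}^{-}$. Unlike the full-flag situation of Subsection~\ref{subsectionapplicationtoaffine0Hecke}, where a geometric degeneration collapsed the calculation, here no such shortcut is available in the abstract categorical action, so this step is lengthy although not conceptually difficult.

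Granting (C), relation (B) is formal. Post-composing the first triangle of (C) with the idempotent $\SSs'_{i+1}\bo_\kk$ and using the orthogonality $\SSs'_{i+1}\TTt'_{i+1}\bo_\kk\cong 0$ makes the second and third terms vanish, forcing $\SSs'_{i+1}\TTt'_i\TTt'_{i+1}\TTt'_i\bo_\kk\cong 0$; the remaining two entries of that line of (B) are then obtained from it by replacing one $\TTt'_j$ at a time by $\SSs'_j$ via the idempotent triangle, each substitution killing its ``middle'' term because $\SSs'_j\TTt'_j\bo_\kk\cong 0\cong\TTt'_j\SSs'_j\bo_\kk$ and $\SSs'_j,\TTt'_j$ are weak idempotents. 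The second line of (B) is obtained identically by post-composing the second triangle of (C) with $\TTt'_i\bo_\kk$, and the last two lines are the $''$-versions. Finally, feeding (B) back into (C) shows that the third term of each triangle in (C) is zero, which yields the braid isomorphisms $\TTt'_i\TTt'_{i+1}\TTt'_i\bo_\kk\cong\TTt'_{i+1}\TTt'_i\TTt'_{i+1}\bo_\kk$ (and likewise for $\SSs'$ and for the $''$-family), recovering Corollary~\ref{shortcatbraid}.
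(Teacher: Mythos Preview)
Your argument for the complementary-idempotent part is correct and in fact slightly slicker than the paper's: the paper verifies $(\TTt'_i)^2\cong\TTt'_i$ and $(\SSs'_i)^2\cong\SSs'_i$ by separate computations, whereas you correctly note that once the orthogonality $\TTt'_i\SSs'_i\cong 0\cong\SSs'_i\TTt'_i$ is established, the weak-idempotent property follows formally from the triangle. Your treatment of (A) also matches the paper.

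The substantive divergence is the order in which you attack (B) and (C). You propose to establish (C) first by a direct generator-level expansion and then deduce (B) formally; the paper does the reverse. The paper proves a \emph{single} vanishing $\SSs'_{i+1}\SSs'_i\TTt'_{i+1}\TTt'_i\bo_\kk\cong 0$ by a short direct computation (commute with conditions (8)(a)(b), (9)(a)(b), (10), then use (6)(b) followed by (6)(a) to kill $\E_{i+1,0}\E_{i,0}\E_{i+1,0}\E_{i,-1}$), and the remaining entries of (B) follow from the idempotent triangles exactly as you describe. Once (B) is in hand, (C) is formal: applying $\TTt'_{i+1}\TTt'_i\TTt'_{i+1}$ to the triangle $\TTt'_i\to\bo\to\SSs'_i$ gives the desired triangle after one identifies $\TTt'_{i+1}\TTt'_i\TTt'_{i+1}\TTt'_i\cong\TTt'_i\TTt'_{i+1}\TTt'_i$ via the vanishing $\SSs'_{i+1}\TTt'_i\TTt'_{i+1}\TTt'_i\cong 0$ from (B). Your route --- proving (C) first by brute expansion --- is not wrong in principle, but you have not indicated how the ``discrepancy term'' actually assembles into $\TTt'_{i+1}\TTt'_i\TTt'_{i+1}\SSs'_i\bo_\kk$, and this is far from obvious at the generator level; the paper's ordering avoids this obstacle entirely.

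Your final paragraph contains a genuine error. You claim that feeding (B) back into (C) kills the third term of each triangle, yielding braid isomorphisms for general $\kk$. This is false: (B) gives $\TTt'_i\TTt'_{i+1}\TTt'_i\SSs'_{i+1}\bo_\kk\cong 0$, not $\TTt'_{i+1}\TTt'_i\TTt'_{i+1}\SSs'_i\bo_\kk\cong 0$; the indices do not match. The braid relation holds only in the full-flag case $\kk=(1,\dots,1)$, and the proof of Corollary~\ref{shortcatbraid} uses the \emph{coincidence} $\TTt'_i=\TTt''_i$ there (hence $\SSs'_i=\SSs''_i$) to import the needed vanishing from the $''$-family of (B). For general $\kk$ the third term in (C) is nonzero.
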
  

\begin{proof}
We will keep using the categorical relations in Definition \ref{catshifted0}; when we mention the conditions, we always mean those from Definition \ref{catshifted0}.

We only show that $({\TTt}'_{i}\bo_{\kk},{\SSs}'_{i}\bo_{\kk})$ is a pair of complementary idempotent since the argument is the same for $({\TTt}''_{i}\bo_{\kk},{\SSs}''_{i}\bo_{\kk})$. From (\ref{triTk'}), we know that the pair $({\TTt}'_{i}\bo_{\kk},{\SSs}'_{i}\bo_{\kk})$ is already in a exact triangle. Thus it suffices to show that ${\TTt}'_{i}\bo_{\kk}$ and ${\SSs}'_{i}\bo_{\kk}$ are weak idempotents and ${\TTt}'_{i}{\SSs}'_{i}\bo_{\kk} \cong 0 \cong {\SSs}'_{i}{\TTt}'_{i}\bo_{\kk}$.

By condition (8)(a), we have 
\begin{align*}
({\TTt}'_{i})^2\bo_{\kk} &\cong {\E}_{i,0}{\F}_{i,k_{i+1}}({\spi}^{+}_{i})^{-1}{\E}_{i,0}{\F}_{i,k_{i+1}}({\spi}^{+}_{i})^{-1} \bo_{\kk} \\
&\cong {\E}_{i,0}{\F}_{i,k_{i+1}}{\E}_{i,-1}\bo_{\kk-\alpha_{i}}({\spi}^{+}_{i})^{-1}{\F}_{i,k_{i+1}}({\spi}^{+}_{i})^{-1}\bo_{\kk}[1]. 
\end{align*}

From condition (11)(a), we have the following exact triangle
\begin{equation*}
{\F}_{i,k_{i+1}}{\E}_{i,-1} \bo_{\kk-\alpha_{i}} \rightarrow {\E}_{i,-1}{\F}_{i,k_{i+1}}\bo_{\kk-\alpha_{i}} \rightarrow {\spi}_{i}^{+}\bo_{\kk-\alpha_{i}}, 
\end{equation*} so we obtain the following exact triangle
\begin{align*}
&({\TTt}'_{i})^2\bo_{\kk} \cong {\E}_{i,0}{\F}_{i,k_{i+1}}{\E}_{i,-1} \bo_{\kk-\alpha_{i}}({\spi}^{+}_{i})^{-1}{\F}_{i,k_{i+1}}({\spi}^{+}_{i})^{-1}\bo_{\kk}[1] \\
&\rightarrow {\E}_{i,0}{\E}_{i,-1}{\F}_{i,k_{i+1}}\bo_{\kk-\alpha_{i}}({\spi}^{+}_{i})^{-1}{\F}_{i,k_{i+1}}({\spi}^{+}_{i})^{-1}\bo_{\kk}[1] \\ 
&\rightarrow {\E}_{i,0}{\spi}_{i}^{+}\bo_{\kk-\alpha_{i}}({\spi}^{+}_{i})^{-1}{\F}_{i,k_{i+1}}({\spi}^{+}_{i})^{-1}\bo_{\kk}[1]\cong {\E}_{i,0}{\F}_{i,k_{i+1}}({\spi}^{+}_{i})^{-1}\bo_{\kk}[1]={\TTt}'_{i}\bo_{\kk}[1].
\end{align*}

By condition (6)(a), we get ${\E}_{i,0}{\E}_{i,-1}{\F}_{i,k_{i+1}}\bo_{\kk-\alpha_{i}}({\spi}^{+}_{i})^{-1}{\F}_{i,k_{i+1}}({\spi}^{+}_{i})^{-1}\bo_{\kk}[1]  \cong 0$. This implies that $({\TTt}'_{i})^2\bo_{\kk} \cong {\TTt}'_{i}\bo_{\kk}$. Similarly, by condition (9)(a), we have 
\begin{align*}
({\SSs}'_{i})^2\bo_{\kk} &\cong {\F}_{i,k_{i+1}}{\E}_{i,0}({\spi}^{+}_{i})^{-1}{\F}_{i,k_{i+1}}{\E}_{i,0}({\spi}^{+}_{i})^{-1}\bo_{\kk}[2] \\
&\cong {\F}_{i,k_{i+1}}{\E}_{i,0}{\F}_{i,k_{i+1}+1}\bo_{\kk+\alpha_{i}}({\spi}^{+}_{i})^{-1}{\E}_{i,0}({\spi}^{+}_{i})^{-1}\bo_{\kk}[1]. 
\end{align*}

From condition (11)(a), we have the following exact triangle
\begin{equation*}
{\F}_{i,k_{i+1}+1}{\E}_{i,0}\bo_{\kk+\alpha_{i}} \rightarrow {\E}_{i,0}{\F}_{i,k_{i+1}+1}\bo_{\kk+\alpha_{i}} \rightarrow {\spi}_{i}^{+}\bo_{\kk+\alpha_{i}}
\end{equation*} so we obtain the following exact triangle 
\begin{align*}
&{\F}_{i,k_{i+1}}{\F}_{i,k_{i+1}+1}{\E}_{i,0}\bo_{\kk+\alpha_{i}}({\spi}^{+}_{i})^{-1}{\E}_{i,0}({\spi}^{+}_{i})^{-1}\bo_{\kk}[1] \\ 
& \rightarrow ({\SSs}'_{i})^2\bo_{\kk} \cong {\F}_{i,k_{i+1}}{\E}_{i,0}{\F}_{i,k_{i+1}+1}\bo_{\kk+\alpha_{i}}({\spi}^{+}_{i})^{-1}{\E}_{i,0}({\spi}^{+}_{i})^{-1}\bo_{\kk}[1] \\
& \rightarrow {\F}_{i,k_{i+1}}{\spi}_{i}^{+}({\spi}^{+}_{i})^{-1}{\E}_{i,0}({\spi}^{+}_{i})^{-1}\bo_{\kk}[1] \cong {\F}_{i,k_{i+1}}{\E}_{i,0}({\spi}^{+}_{i})^{-1}\bo_{\kk}[1] = {\SSs}'_{i}\bo_{\kk}.
\end{align*}

By condition (7)(a), we get ${\F}_{i,k_{i+1}}{\F}_{i,k_{i+1}+1}{\E}_{i,0}\bo_{\kk+\alpha_{i}}({\spi}^{+}_{i})^{-1}{\E}_{i,0}({\spi}^{+}_{i})^{-1}\bo_{\kk}[1]\cong 0$, so  $({\SSs}'_{i})^2\bo_{\kk} \cong {\SSs}'_{i}\bo_{\kk}$.

Next, to prove ${\TTt}'_{i}{\SSs}'_{i}\bo_{\kk} \cong {\SSs}'_{i}{\TTt}'_{i}\bo_{\kk} \cong 0$, by conditions (7)(a) and (9)(a) we have 
\begin{align*}
{\TTt}'_{i}{\SSs}'_{i}\bo_{\kk} &\cong {\E}_{i,0}{\F}_{i,k_{i+1}}({\spi}^{+}_{i})^{-1}{\F}_{i,k_{i+1}}{\E}_{i,0}({\spi}^{+}_{i})^{-1}\bo_{\kk}[1] \\ 
&\cong {\E}_{i,0}{\F}_{i,k_{i+1}}{\F}_{i,k_{i+1}+1}({\spi}^{+}_{i})^{-1}{\E}_{i,0}({\spi}^{+}_{i})^{-1}\bo_{\kk} \cong 0,
\end{align*} similarly by conditions (6)(a) and (8)(a) we have 
\begin{align*}
{\SSs}'_{i}{\TTt}'_{i}\bo_{\kk} &\cong {\F}_{i,k_{i+1}}{\E}_{i,0}({\spi}^{+}_{i})^{-1}{\E}_{i,0}{\F}_{i,k_{i+1}}({\spi}^{+}_{i})^{-1}\bo_{\kk}[1] \\
&\cong {\F}_{i,k_{i+1}}{\E}_{i,0}{\E}_{i,-1}({\spi}^{+}_{i})^{-1}{\F}_{i,k_{i+1}}({\spi}^{+}_{i})^{-1}\bo_{\kk}[2] \cong 0.
\end{align*} 

Hence, we prove that $({\TTt}'_{i}\bo_{\kk},{\SSs}'_{i}\bo_{\kk})$ is a pair of complementary idempotents for all $1 \leq i \leq n-1$. 

Next, we show the relations (A), (B), and (C). It is clear that (A) is a direct consequence of conditions (6)(c), (7)(c), (8)(c), (9)(c), and (10).

For (B) and (C), we only prove the first one since the rest can be proved by using the same argument.

We prove $ {\SSs}'_{i+1}{\SSs}'_{i}{\TTt}'_{i+1}{\TTt}'_{i}\bo_{\kk} \cong 0$ for (B). By definition and conditions (8)(a)(b), (9)(a)(b), we have 
\begin{align}
    \begin{split}
	 &{\SSs}'_{i+1}{\SSs}'_{i}{\TTt}'_{i+1}{\TTt}'_{i}\bo_{\kk} \\
	 &\cong {\F}_{i+1,k_{i+2}}{\E}_{i+1,0}({\spi}^{+}_{i+1})^{-1}{\F}_{i,k_{i+1}}{\E}_{i,0}({\spi}^{+}_{i})^{-1}{\E}_{i+1,0}{\F}_{i+1,k_{i+2}}({\spi}^{+}_{i+1})^{-1}{\E}_{i,0}{\F}_{i,k_{i+1}}({\spi}^{+}_{i})^{-1}\bo_{\kk}[2]  \\
	 &\cong {\F}_{i+1,k_{i+2}}{\E}_{i+1,0}{\F}_{i,k_{i+1}}{\E}_{i,0}({\spi}^{+}_{i+1})^{-1}({\spi}^{+}_{i})^{-1}{\E}_{i+1,0}{\F}_{i+1,k_{i+2}}{\E}_{i,0}{\F}_{i,k_{i+1}}({\spi}^{+}_{i+1})^{-1}({\spi}^{+}_{i})^{-1}\bo_{\kk}[2]  \\
	 & \cong {\F}_{i+1,k_{i+2}}{\E}_{i+1,0}{\F}_{i,k_{i+1}}{\E}_{i,0}{\E}_{i+1,0}{\F}_{i+1,k_{i+2}}({\spi}^{+}_{i+1})^{-1}({\spi}^{+}_{i})^{-1}{\E}_{i,0}{\F}_{i,k_{i+1}}({\spi}^{+}_{i+1})^{-1}({\spi}^{+}_{i})^{-1}\bo_{\kk}[2]  \\
	 & \cong {\F}_{i+1,k_{i+2}}{\E}_{i+1,0}{\F}_{i,k_{i+1}}{\E}_{i,0}{\E}_{i+1,0}{\F}_{i+1,k_{i+2}}{\E}_{i,-1}{\F}_{i,k_{i+1}+1}({\spi}^{+}_{i+1})^{-2}({\spi}^{+}_{i})^{-2}\bo_{\kk}[2]. \label{eq 25}
	\end{split}
\end{align}

By condition (10), (\ref{eq 25}) becomes 
\begin{equation} \label{eq 26}
{\F}_{i+1,k_{i+2}}{\F}_{i,k_{i+1}}{\E}_{i+1,0}{\E}_{i,0}{\E}_{i+1,0}{\E}_{i,-1}{\F}_{i+1,k_{i+2}}{\F}_{i,k_{i+1}+1}({\spi}^{+}_{i+1})^{-2}({\spi}^{+}_{i})^{-2}\bo_{\kk}.
\end{equation}

From condition (6)(b) with $s=-1$, we have the following exact triangle
\begin{equation*}
{\E}_{i+1,-1}{\E}_{i,r+1}\bo_{\kk} \rightarrow {\E}_{i+1,0}{\E}_{i,r}\bo_{\kk} \rightarrow {\E}_{i,r}{\E}_{i+1,0}\bo_{\kk},
\end{equation*} applying ${\E}_{i+1,0}\bo_{\kk+\alpha_{i}+\alpha_{i+1}}$ and using condition (6)(a) we get 
\begin{equation*}
{\E}_{i+1,0}{\E}_{i+1,0}{\E}_{i,r}\bo_{\kk} \cong {\E}_{i+1,0}{\E}_{i,r}{\E}_{i+1,0}\bo_{\kk}.
\end{equation*} In particular, for $r=0$, we have  ${\E}_{i+1,0}{\E}_{i+1,0}{\E}_{i,0}\bo_{\kk} \cong {\E}_{i+1,0}{\E}_{i,0}{\E}_{i+1,0}\bo_{\kk}$ for all $\kk$.

Thus the term ${\E}_{i+1,0}{\E}_{i,0}{\E}_{i+1,0}{\E}_{i,-1}\bo_{\kk-\alpha_{i}-\alpha_{i+1}}$ in (\ref{eq 26}) becomes
\begin{equation*}
	{\E}_{i+1,0}{\E}_{i,0}{\E}_{i+1,0}{\E}_{i,-1}\bo_{\kk-\alpha_{i}-\alpha_{i+1}} \cong {\E}_{i+1,0}{\E}_{i+1,0}{\E}_{i,0}{\E}_{i,-1}\bo_{\kk-\alpha_{i}-\alpha_{i+1}} \cong 0 \ (\text{by condition (6)(a)})
\end{equation*} so (\ref{eq 26}) equal to zero and thus ${\SSs}'_{i+1}{\SSs}'_{i}{\TTt}'_{i+1}{\TTt}'_{i}\bo_{\kk} \cong 0$.

Using the fact that $({\TTt}'_{i}\bo_{\kk} ,{\SSs}'_{i}\bo_{\kk})$ and $({\TTt}'_{i+1}\bo_{\kk} ,{\SSs}'_{i+1}\bo_{\kk})$ are pairs of complementary idempotents, we can show that 
\begin{equation*}
{\SSs}'_{i+1}{\SSs}'_{i}{\TTt}'_{i+1}{\TTt}'_{i}\bo_{\kk} \cong 	{\SSs}'_{i+1}{\TTt}'_{i}{\TTt}'_{i+1}{\TTt}'_{i}\bo_{\kk}  \cong 	{\SSs}'_{i+1}{\SSs}'_{i}{\SSs}'_{i+1}{\TTt}'_{i}\bo_{\kk} \cong 0.  
\end{equation*}  

Finally we prove there exists exact triangle $ {\TTt}'_{i}{\TTt}'_{i+1}{\TTt}'_{i}\bo_{\kk} \rightarrow {\TTt}'_{i+1}{\TTt}'_{i}{\TTt}'_{i+1}\bo_{\kk} \rightarrow {\TTt}'_{i+1}{\TTt}'_{i}{\TTt}'_{i+1}{\SSs}'_{i}\bo_{\kk}$ for (C). Applying ${\TTt}'_{i+1}{\TTt}'_{i}{\TTt}'_{i+1}\bo_{\kk}$ to the exact triangle $ {\TTt}'_{i}\bo_{\kk} \rightarrow \bo_{\kk} \rightarrow {\SSs}'_{i}\bo_{\kk}$, we get 
\begin{equation} \label{eq 28}
	{\TTt}'_{i+1}{\TTt}'_{i}{\TTt}'_{i+1}{\TTt}'_{i}\bo_{\kk} \rightarrow {\TTt}'_{i+1}{\TTt}'_{i}{\TTt}'_{i+1}\bo_{\kk} \rightarrow {\TTt}'_{i+1}{\TTt}'_{i}{\TTt}'_{i+1}{\SSs}'_{i}\bo_{\kk} 
\end{equation}

From (B), we have ${\SSs}'_{i+1}{\TTt}'_{i}{\TTt}'_{i+1}{\TTt}'_{i}\bo_{\kk}  \cong 0$. Applying ${\TTt}'_{i}{\TTt}'_{i+1}{\TTt}'_{i}\bo_{\kk}$ to the exact triangle $ {\TTt}'_{i+1}\bo_{\kk} \rightarrow \bo_{\kk} \rightarrow {\SSs}'_{i+1}\bo_{\kk}$, we get 
\begin{equation*}
{\TTt}'_{i+1}{\TTt}'_{i}{\TTt}'_{i+1}{\TTt}'_{i}\bo_{\kk} \rightarrow {\TTt}'_{i}{\TTt}'_{i+1}{\TTt}'_{i}\bo_{\kk} \rightarrow {\SSs}'_{i+1}{\TTt}'_{i}{\TTt}'_{i+1}{\TTt}'_{i}\bo_{\kk}\cong 0,
\end{equation*} which implies that ${\TTt}'_{i+1}{\TTt}'_{i}{\TTt}'_{i+1}{\TTt}'_{i}\bo_{\kk} \cong {\TTt}'_{i}{\TTt}'_{i+1}{\TTt}'_{i}\bo_{\kk}$.

Thus (\ref{eq 28}) becomes
\begin{equation*}
	 {\TTt}'_{i}{\TTt}'_{i+1}{\TTt}'_{i}\bo_{\kk} \rightarrow {\TTt}'_{i+1}{\TTt}'_{i}{\TTt}'_{i+1}\bo_{\kk} \rightarrow {\TTt}'_{i+1}{\TTt}'_{i}{\TTt}'_{i+1}{\SSs}'_{i}\bo_{\kk} 
\end{equation*} which is our desire triangle.
\end{proof}

We state some corollaries of the main theorem. The first is a direct consequence of Proposition \ref{cohTS}.

\begin{corollary} \label{catcohTS}
For each $1 \leq i \leq n-1$, the pairs of complementary idempotents $({\TTt}'_{i}\bo_{\kk},{\SSs}'_{i}\bo_{\kk})$ and $({\TTt}''_{i}\bo_{\kk},{\SSs}''_{i}\bo_{\kk})$
in the triangulated monoidal category $\Hom(\Kk(\kk),\Kk(\kk))$ satisfy the following
\begin{align*}
\Hom({\TTt}'_{i}\bo_{\kk},{\TTt}'_{i}\bo_{\kk}) &\cong \End({\E}_{i,0}\bo_{\kk-\alpha_{i}}), \ &\Hom({\TTt}''_{i}\bo_{\kk},{\TTt}''_{i}\bo_{\kk}) &\cong \End({\F}_{i,0}\bo_{\kk+\alpha_{i}} ) \\
\Hom({\SSs}'_{i}\bo_{\kk},{\SSs}'_{i}\bo_{\kk}) &\cong \End({\F}_{i,k_{i+1}}\bo_{\kk+\alpha_i}), \ &\Hom({\SSs}''_{i}\bo_{\kk},{\SSs}''_{i}\bo_{\kk}) &\cong \End({\E}_{i,-k_{i}}\bo_{\kk-\alpha_i}) \\
\Hom({\TTt}'_{i}\bo_{\kk},{\SSs}'_{i}\bo_{\kk}) &\cong 0, \ &\Hom({\TTt}''_{i}\bo_{\kk},{\SSs}''_{i}\bo_{\kk}) &\cong 0.
\end{align*}
\end{corollary}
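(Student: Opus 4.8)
The plan is to read everything off Proposition \ref{cohTS}, applied to the two families of pairs of complementary idempotents constructed in Theorem \ref{Theorem 5}, and then to unwind each right-hand side with a one-line adjunction computation. Since $({\TTt}'_{i}\bo_{\kk},{\SSs}'_{i}\bo_{\kk})$ and $({\TTt}''_{i}\bo_{\kk},{\SSs}''_{i}\bo_{\kk})$ are pairs of complementary idempotents in the triangulated monoidal category $\Hom(\Kk(\kk),\Kk(\kk))$ (monoidal identity $\bo_{\kk}$, monoidal product the composition of $1$-morphisms), Proposition \ref{cohTS} applied to the pair $({\TTt}'_{i}\bo_{\kk},{\SSs}'_{i}\bo_{\kk})$ gives $\Hom({\TTt}'_{i}\bo_{\kk},{\SSs}'_{i}\bo_{\kk}) \cong 0$, and applied to $({\TTt}''_{i}\bo_{\kk},{\SSs}''_{i}\bo_{\kk})$ gives $\Hom({\TTt}''_{i}\bo_{\kk},{\SSs}''_{i}\bo_{\kk}) \cong 0$; this is the last row. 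The same proposition also yields $\Hom({\TTt}'_{i}\bo_{\kk},{\TTt}'_{i}\bo_{\kk}) \cong \Hom({\TTt}'_{i}\bo_{\kk},\bo_{\kk})$ and $\Hom({\SSs}'_{i}\bo_{\kk},{\SSs}'_{i}\bo_{\kk}) \cong \Hom(\bo_{\kk},{\SSs}'_{i}\bo_{\kk})$, and likewise for the doubly-primed pair.

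It remains to rewrite these four $\Hom$-spaces using the adjoint presentations recorded in Theorem \ref{Theorem 5}. Setting $\F \coloneqq {\E}_{i,0}\bo_{\kk-\alpha_{i}}$, we have ${\TTt}'_{i}\bo_{\kk} \cong \F\F^{R}$, and the $(\F,\F^{R})$-adjunction gives
\[
\Hom(\F\F^{R},\bo_{\kk}) \cong \Hom(\F^{R},\F^{R}) = \End(\F^{R}) \cong \End(\F) = \End({\E}_{i,0}\bo_{\kk-\alpha_{i}}),
\]
the last isomorphism being the passage $\phi \mapsto \phi^{R}$ from the endomorphisms of a functor to those of its right adjoint. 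Dually, with $\G \coloneqq {\F}_{i,k_{i+1}}\bo_{\kk+\alpha_{i}}$ we have ${\SSs}'_{i}\bo_{\kk} \cong \G\G^{L}$, and the $(\G^{L},\G)$-adjunction gives $\Hom(\bo_{\kk},\G\G^{L}) \cong \End(\G^{L}) \cong \End(\G) = \End({\F}_{i,k_{i+1}}\bo_{\kk+\alpha_{i}})$. The two remaining identities are obtained verbatim, now taking $\F \coloneqq {\F}_{i,0}\bo_{\kk+\alpha_{i}}$ for ${\TTt}''_{i}\bo_{\kk} \cong \F\F^{R}$ and $\G \coloneqq {\E}_{i,-k_{i}}\bo_{\kk-\alpha_{i}}$ for ${\SSs}''_{i}\bo_{\kk} \cong \G\G^{L}$.

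There is essentially no obstacle: once Theorem \ref{Theorem 5} is available the statement is formal. The only two points needing a moment's care are the bookkeeping of which adjoint occurs in each case (dictated by the presentations $\F\F^{R}$ versus $\G\G^{L}$ in Theorem \ref{Theorem 5}, i.e. by whether the idempotent is counital or unital), and the fact that $\phi\mapsto\phi^{R}$ is an anti-isomorphism, so that one actually gets $\End(\F^{R}) \cong \End(\F)^{\mathrm{op}}$; this is harmless for the statement, which we read as an isomorphism of the underlying (graded) vector spaces, and of algebras up to opposite.
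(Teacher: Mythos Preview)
Your proof is correct and follows essentially the same approach as the paper: invoke Proposition \ref{cohTS} for each pair of complementary idempotents, then unwind $\Hom({\TTt}'_{i}\bo_{\kk},\bo_{\kk})$ and $\Hom(\bo_{\kk},{\SSs}'_{i}\bo_{\kk})$ (and their doubly-primed analogues) via the adjoint presentations $\F\F^{R}$ and $\G\G^{L}$. The only cosmetic difference is that the paper applies the adjunction on the \emph{inner} factor to land directly on $\End(\F)$ and $\End(\G)$, whereas you peel off the \emph{outer} factor first, obtaining $\End(\F^{R})$ and $\End(\G^{L})$ and then identifying these with $\End(\F)$ and $\End(\G)$; your closing remark about the anti-isomorphism correctly accounts for this extra step.
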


\begin{proof}
We prove the case for $({\TTt}'_{i}\bo_{\kk},{\SSs}'_{i}\bo_{\kk})$. By Proposition \ref{cohTS} and definitions of ${\TTt}'_{i}\bo_{\kk}$, ${\SSs}'_{i}\bo_{\kk}$ from (\ref{Tk'}), (\ref{Sk'}), using adjunctions we obtain the following
\begin{align}
\begin{split} \label{adj}
\Hom({\TTt}'_{i}\bo_{\kk},{\TTt}'_{i}\bo_{\kk}) &\cong \Hom({\TTt}'_{i}\bo_{\kk},\bo_{\kk}) \cong \Hom({\E}_{i,0}\bo_{\kk-\alpha_{i}}({\E}_{i,0}\bo_{\kk-\alpha_{i}})^{R},\bo_{\kk}) \\
&\cong \Hom({\E}_{i,0}\bo_{\kk-\alpha_{i}},{\E}_{i,0}\bo_{\kk-\alpha_{i}})=\End({\E}_{i,0}\bo_{\kk-\alpha_{i}}), \\ 
\Hom({\SSs}'_{i}\bo_{\kk}{\SSs}'_{i}\bo_{\kk}) &\cong \Hom(\bo_{\kk},{\SSs}'_{i}\bo_{\kk}) \cong \Hom(\bo_{\kk},{\F}_{i,k_{i+1}}\bo_{\kk+\alpha_i}({\F}_{i,k_{i+1}}\bo_{\kk+\alpha_i})^{L}) \\ 
&\cong \Hom({\F}_{i,k_{i+1}}\bo_{\kk+\alpha_i},{\F}_{i,k_{i+1}}\bo_{\kk+\alpha_i})= \End({\F}_{i,k_{i+1}}\bo_{\kk+\alpha_i}).
\end{split}
\end{align}

Finally, $\Hom({\TTt}'_{i}\bo_{\kk},{\SSs}'_{i}\bo_{\kk}) \cong 0 \cong \Hom({\TTt}''_{i}\bo_{\kk},{\SSs}''_{i}\bo_{\kk})$ follows from (\ref{orthTS}) in Proposition \ref{cohTS}.
\end{proof}

We mention a remark about the above corollary.

\begin{remark}
We expect that the morphisms $\epsilon, \ \eta$ and $\epsilon', \ \eta'$ in the exact triangles (\ref{triTk'}) and (\ref{TriTk''}) should come from the units and counits induced from the identity morphisms in (\ref{adj}). We would like to address this in the future since it is related to constructing a 2-representation of $\dot{\Uu}_{0,N}(L\SL_{n})$. 
\end{remark}

The second is an application to the geometric setting where the weight categories are derived categories of coherent sheaves on $n$-step partial flag varieties.

\begin{corollary} \label{mainthmgeo}
For each $\kk \in C(n,N)$, consider the triangulated monoidal category $\Dd^b(\Fl_{\kk}(\CC^N) \times \Fl_{\kk}(\CC^N))$, where the convolution product gives the monoidal structure. Then we have the following two families of pairs of complementary idempotents that are given by FM kernels.
\begin{align*}
\{({\Tt}'_{i}\bo_{\kk} \coloneqq {\Ee}_{i,0}\ast {\Ff}_{i,k_{i+1}} \ast ({\Psi}^{+}_{i})^{-1} \bo_{\kk}, \ {\Ss}'_{i}\bo_{\kk} \coloneqq {\Ff}_{i,k_{i+1}} \ast {\Ee}_{i,0} \ast ({\Psi}^{+}_{i})^{-1} \bo_{\kk}[1])\}_{1 \leq i \leq n-1} \\
\{({\Tt}''_{i}\bo_{\kk} \coloneqq {\Ff}_{i,0}\ast {\Ee}_{i,-k_{i}} \ast ({\Psi}^{-}_{i})^{-1}\bo_{\kk}, \ {\Ss}''_{i}\bo_{\kk} \coloneqq {\Ee}_{i,-k_{i}} \ast {\Ff}_{i,0}\ast  ({\Psi}^{-}_{i})^{-1}\bo_{\kk}[1])\}_{1 \leq i \leq n-1}.
\end{align*} Moreover, these FM kernels satisfy relations (A), (B), and (C) in Theorem \ref{Theorem 5}. The cohomology of ${\Tt}'_{i}\bo_{\kk}$, ${\Tt}''_{i}\bo_{\kk}$, ${\Ss}'_{i}\bo_{\kk}$, and ${\Ss}''_{i}\bo_{\kk}$ are given by
\begin{align}
&\Hom({\Tt}'_{i}\bo_{\kk},{\Tt}'_{i}\bo_{\kk}) \cong \bigoplus_{k} H^*(W^{1}_{i}(\kk-\alpha_i),\bigwedge^k\Nn_{W^{1}_{i}(\kk-\alpha_i)}) \cong \Hom({\Ss}''_{i}\bo_{\kk},{\Ss}''_{i}\bo_{\kk}), \label{cohT'k}\\
&\Hom({\Tt}''_{i}\bo_{\kk},{\Tt}''_{i}\bo_{\kk}) \cong \bigoplus_{k} H^*(W^{1}_{i}(\kk),\bigwedge^k\Nn_{W^{1}_{i}(\kk)}) \cong \Hom({\Ss}'_{i}\bo_{\kk},{\Ss}'_{i}\bo_{\kk}),\label{cohT''k}
\end{align} where $W^1_{i}(\kk)$ is defined in (\ref{eq 30}) and $\Nn_{W^1_{i}(\kk)}$ denote its normal bundle in $\Fl_{\kk}(\CC^N) \times \Fl_{\kk+\alpha_i}(\CC^N)$.
\end{corollary}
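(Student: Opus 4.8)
The plan is to deduce the statement from Theorem \ref{Theorem 5} and Corollary \ref{catcohTS} applied to the geometric categorical action of Proposition \ref{Thm catact}, where $\Kk(\kk)=\Dd^b(\Fl_{\kk}(\CC^N))$, by translating everything through the Fourier--Mukai dictionary. First I would record three facts about that action: every elementary $1$-morphism $\E_{i,r}\bo_{\kk}$, $\F_{i,s}\bo_{\kk}$, $(\spi^{\pm}_{i})^{\pm1}\bo_{\kk}$ is an FM transform with the kernel written down in Proposition \ref{Thm catact}; the identity $\bo_{\kk}$ is the FM transform with kernel $\Oo_{\Delta}$; and, by Proposition \ref{Proposition 2}, composition of FM transforms is the FM transform of the convolution of kernels. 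Combining these with the definitions \eqref{Tk'}--\eqref{Sk''} shows at once that $\TTt'_{i}\bo_{\kk}$, $\SSs'_{i}\bo_{\kk}$, $\TTt''_{i}\bo_{\kk}$, $\SSs''_{i}\bo_{\kk}$ are exactly the FM transforms with kernels $\Tt'_{i}\bo_{\kk}=\Ee_{i,0}\ast\Ff_{i,k_{i+1}}\ast(\Psi^{+}_{i})^{-1}\bo_{\kk}$, $\Ss'_{i}\bo_{\kk}=\Ff_{i,k_{i+1}}\ast\Ee_{i,0}\ast(\Psi^{+}_{i})^{-1}\bo_{\kk}[1]$, and likewise for the double-primed pair; the isomorphisms \eqref{Tk'}, \eqref{Sk'} between these kernels and the adjoint-theoretic expressions follow from Proposition \ref{Proposition 1} and Remark \ref{remark 1}.

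Next I would note that the entire proof of Theorem \ref{Theorem 5} takes place inside the triangulated monoidal category $(\Hom(\Kk(\kk),\Kk(\kk)),\circ)$ using only the abstract categorical relations of Definition \ref{catshifted0}, and that for the action of Proposition \ref{Thm catact} those relations --- including the exact triangles of condition (11) --- are realised by honest maps of FM kernels in $\Dd^b(\Fl_{\kk}(\CC^N)\times\Fl_{\kk}(\CC^N))$. Hence, via the (fully faithful, monoidal, triangulated) assignment sending a kernel to its FM transform, the argument of Theorem \ref{Theorem 5} may be read verbatim in $(\Dd^b(\Fl_{\kk}(\CC^N)\times\Fl_{\kk}(\CC^N)),\ast)$: the idempotent triangle \eqref{triTk'} is the convolution of the condition-(11)(a) kernel triangle with the invertible kernel $(\Psi^{+}_{i})^{-1}\bo_{\kk}$, the weak-idempotent and orthogonality isomorphisms $\Tt'_{i}\ast\Tt'_{i}\cong\Tt'_{i}$ and $\Tt'_{i}\ast\Ss'_{i}\cong0\cong\Ss'_{i}\ast\Tt'_{i}$ are the kernel incarnations of the computations there, and the same for relations (A), (B), (C). Alternatively one may stay at the functor level and descend: by uniqueness of the FM kernel of an exact functor between derived categories of smooth projective varieties, together with $\Phi_{\Pp}\cong0\Rightarrow\Pp\cong0$, every isomorphism or triangle of functors produced by Theorem \ref{Theorem 5} lifts uniquely to one of kernels.

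For the cohomology statements, Corollary \ref{catcohTS} already reduces the four endomorphism groups to $\End(\E_{i,0}\bo_{\kk-\alpha_{i}})$, $\End(\E_{i,-k_{i}}\bo_{\kk-\alpha_{i}})$, $\End(\F_{i,0}\bo_{\kk+\alpha_{i}})$ and $\End(\F_{i,k_{i+1}}\bo_{\kk+\alpha_{i}})$. Each of these is the endomorphism algebra of an FM transform whose kernel is a line-bundle power of $\V_{i}/\V'_{i}$ pushed forward along a closed regular embedding $\iota\colon W\hookrightarrow\Fl_{\kk\mp\alpha_{i}}(\CC^N)\times\Fl_{\kk}(\CC^N)$, with $W=W^{1}_{i}(\kk-\alpha_{i})$ for the two $\E$'s and $W={}^{\TTt}W^{1}_{i}(\kk)\cong W^{1}_{i}(\kk)$ for the two $\F$'s. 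Using uniqueness of kernels and adjunction for the closed embedding, $\End(\Phi_{\iota_{*}L})\cong\Hom_{\Dd^b(W)}(L,\iota^{!}\iota_{*}L)\cong H^{*}(W,\iota^{!}\iota_{*}\Oo_{W})$, the line bundle $L$ cancelling. For a regular embedding of codimension $c=\tcodim W$ with normal bundle $\Nn=\Nn_{W}$ one has $\iota^{*}\iota_{*}\Oo_{W}\cong\bigoplus_{j=0}^{c}\bigwedge^{j}\Nn^{\vee}[j]$ (the Koszul/Hochschild--Kostant--Rosenberg splitting) and $\iota^{!}(-)\cong\iota^{*}(-)\otimes\det\Nn[-c]$ since $\omega_{W/\Fl\times\Fl}=\det\Nn$, whence $\iota^{!}\iota_{*}\Oo_{W}\cong\bigoplus_{k=0}^{c}\bigwedge^{k}\Nn[-k]$ and $H^{*}(W,\iota^{!}\iota_{*}\Oo_{W})\cong\bigoplus_{k}H^{*}(W,\bigwedge^{k}\Nn)$ as graded vector spaces; this yields \eqref{cohT'k} and \eqref{cohT''k}, the two occurrences of each group on the left coinciding because the $\E$- and $\F$-side kernels are supported on the same variety with the same normal bundle. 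The splitting and the degeneration of the resulting spectral sequence are legitimate here because $W^{1}_{i}(\kk-\alpha_{i})$ and $W^{1}_{i}(\kk)$ are themselves partial flag varieties (for instance $W^{1}_{i}(\kk-\alpha_{i})\cong\Fl_{(k_{1},\dots,k_{i-1},k_{i},1,k_{i+1}-1,k_{i+2},\dots,k_{n})}(\CC^N)$).

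I expect the main obstacle to be precisely this last local computation: correctly identifying $\iota^{!}\iota_{*}\Oo_{W}$ for the inclusion $W^{1}_{i}\hookrightarrow\Fl\times\Fl$, pinning down the twist $\omega_{W/\Fl\times\Fl}=\det\Nn_{W}$, and justifying that one obtains an honest direct-sum isomorphism rather than merely the associated graded of a filtration. Everything else is bookkeeping: unwinding the definitions \eqref{Tk'}--\eqref{Sk''} through Propositions \ref{Proposition 1}, \ref{Proposition 2} and Remark \ref{remark 1}, and transcribing Theorem \ref{Theorem 5} and Corollary \ref{catcohTS} from the abstract setting into the category of FM kernels.
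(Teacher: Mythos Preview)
Your approach is essentially the same as the paper's: the first part of the corollary is obtained by specialising Theorem~\ref{Theorem 5} to the geometric action of Proposition~\ref{Thm catact} (where the Hom-categories are, by construction, the derived categories of products with $2$-morphisms given by maps of kernels, so no separate ``descent from functors to kernels'' argument is needed), and the cohomology is computed by combining Corollary~\ref{catcohTS} with the Koszul description of $\iota^{*}\iota_{*}\Oo_{W}$ for the regular embedding $\iota$. The one minor difference is in the identification $\Hom(\Tt'_{i}\bo_{\kk},\Tt'_{i}\bo_{\kk})\cong\Hom(\Ss''_{i}\bo_{\kk},\Ss''_{i}\bo_{\kk})$: the paper obtains it by conjugating $\Ee_{i,-k_{i}}\bo_{\kk-\alpha_{i}}$ to $\Ee_{i,0}\bo_{\kk-\alpha_{i}}$ via $(\Psi^{+}_{i})^{k_{i}}$ using condition~(8)(a), whereas you observe directly that the line-bundle twist on $W^{1}_{i}(\kk-\alpha_{i})$ cancels in the $\End$ computation; both arguments are valid and lead to the same answer. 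Your caution about the direct-sum splitting of $\iota^{*}\iota_{*}\Oo_{W}$ versus the associated graded is well placed --- the paper simply writes the direct sum without further comment.
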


\begin{proof}
The only things we have to prove are the cohomologies (\ref{cohT'k}) and (\ref{cohT''k}). 

We prove (\ref{cohT'k}) only since the argument for (\ref{cohT''k}) is the same. By Corollary \ref{catcohTS}, we know that $\Hom({\Tt}'_{i}\bo_{\kk},{\Tt}'_{i}\bo_{\kk}) \cong \End(\Ee_{i,0}\bo_{\kk-\alpha_i})$. From the definition of $\Ee_{i,0}\bo_{\kk-\alpha_i}$ in Proposition \ref{Thm catact}, we obtain
\begin{align*}
&\End(\Ee_{i,0}\bo_{\kk-\alpha_i}) \\
&=\Hom_{\Fl_{\kk-\alpha_i}(\CC^N) \times \Fl_{\kk}(\CC^N)}(\iota_{*}\Oo_{W^{1}_{i}(\kk-\alpha_i)}, \iota_*\Oo_{W^{1}_{i}(\kk-\alpha_i)}) 
\cong  \Hom_{W^{1}_{i}(\kk-\alpha_i)}(\iota^*\iota_{*}\Oo_{W^{1}_{i}(\kk-\alpha_i)}, \Oo_{W^{1}_{i}(\kk-\alpha_i)}) \\
& \cong \Hom_{W^{1}_{i}(\kk-\alpha_i)}(\bigoplus_{k}\bigwedge^k\Nn^{\vee}_{W^{1}_{i}(\kk-\alpha_i)}[k], \Oo_{W^{1}_{i}(\kk-\alpha_i)})  \cong \bigoplus_{k} H^*(W^{1}_{i}(\kk-\alpha_i),\bigwedge^k\Nn_{W^{1}_{i}(\kk-\alpha_i)})
\end{align*} where $\iota:W^{1}_{i}(\kk-\alpha_i) \rightarrow \Fl_{\kk-\alpha_i}(\CC^N) \times \Fl_{\kk}(\CC^N)$ is the natural inclusion.

On the other hand, from Corollary \ref{catcohTS}, using condition (8)(a), we have 
\begin{align*}
\Hom({\Ss}''_{i}\bo_{\kk},{\Ss}''_{i}\bo_{\kk}) &\cong \Hom(\Ee_{i,-k_i}\bo_{\kk-\alpha_i},\Ee_{i,-k_i}\bo_{\kk-\alpha_i}) \\
&\cong \Hom((\Psi^{+})^{-k_i}\Ee_{i,0}(\Psi^{+})^{k_i}\bo_{\kk-\alpha_i}[-k_i],(\Psi^{+})^{-k_i}\Ee_{i,0}(\Psi^{+})^{k_i}\bo_{\kk-\alpha_i}[-k_i]) \\
& \cong \Hom(\Ee_{i,0}\bo_{\kk-\alpha_i},\Ee_{i,0}\bo_{\kk-\alpha_i})=\End(\Ee_{i,0}\bo_{\kk-\alpha_i}).
\end{align*}
\end{proof}

The final one is an application to prove the categorical braid relations in (\ref{6.2}) in Proposition \ref{catactaffine0Hecke}. Note that although we already prove (\ref{6.2}) in \cite[Section 6]{Hsu}, it still takes one page to write down the proof there, and the following proof is much shorter.

\begin{corollary} \label{shortcatbraid}
In the case where $\kk=(1,1,...,1)$, we have $\Fl_{\kk}(\CC^N)=G/B$.  Consider $\Tt_{i} \coloneqq \Oo_{G/B \times_{G/P_i} G/B} \in \Dd^b(G/B \times G/B)$  where $G/B \times_{G/P_i} G/B$ is the Bott-Samelson variety for all $1 \leq i \leq N-1$. Then we have the braid relation $\Tt_{i}\ast \Tt_{i+1} \ast \Tt_{i} \cong \Tt_{i+1} \ast \Tt_{i} \ast \Tt_{i+1}$.  
\end{corollary}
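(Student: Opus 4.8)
The plan is to derive the braid relation from relation~(C) of Theorem~\ref{Theorem 5} (in its geometric form, Corollary~\ref{mainthmgeo}), using a vanishing from relation~(B) to kill the resulting error term, after identifying $\Tt_i$ with the idempotent kernels attached to the full flag variety.

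First I would note that for $\kk=(1,1,\dots,1)$ one has $k_i=k_{i+1}=1$, so comparing the definitions (\ref{Tk'}), (\ref{Tk''}) with the isomorphisms (\ref{eq p2}) gives $\Tt'_i\bo_{(1,1,\dots,1)}\cong\Tt_i\cong\Tt''_i\bo_{(1,1,\dots,1)}$ as Fourier--Mukai kernels on $G/B\times G/B$. Since convolution of kernels corresponds to composition of Fourier--Mukai transforms (Proposition~\ref{Proposition 2}), the asserted isomorphism $\Tt_i\ast\Tt_{i+1}\ast\Tt_i\cong\Tt_{i+1}\ast\Tt_i\ast\Tt_{i+1}$ is equivalent to $\Tt'_i\ast\Tt'_{i+1}\ast\Tt'_i\bo_{(1,1,\dots,1)}\cong\Tt'_{i+1}\ast\Tt'_i\ast\Tt'_{i+1}\bo_{(1,1,\dots,1)}$. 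By the first exact triangle in relation~(C) of Corollary~\ref{mainthmgeo},
\[
\Tt'_i\ast\Tt'_{i+1}\ast\Tt'_i\bo_\kk \longrightarrow \Tt'_{i+1}\ast\Tt'_i\ast\Tt'_{i+1}\bo_\kk \longrightarrow \Tt'_{i+1}\ast\Tt'_i\ast\Tt'_{i+1}\ast\Ss'_i\bo_\kk,
\]
so it remains to show that the third term vanishes when $\kk=(1,1,\dots,1)$.

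To see this, observe that via the exact triangles (\ref{triTk'}) and (\ref{TriTk''}) together with the identifications above, both $(\Tt_i,\Ss'_i\bo_{(1,1,\dots,1)})$ and $(\Tt_i,\Ss''_i\bo_{(1,1,\dots,1)})$ are pairs of complementary idempotents sharing the same counital idempotent $\Tt_i$; by the uniqueness of the idempotent triangle determined by $[\Tt_i]$ (Theorem~4.5 of \cite{Ho}), we get $\Ss'_i\bo_{(1,1,\dots,1)}\cong\Ss''_i\bo_{(1,1,\dots,1)}$. Substituting this, along with $\Tt'_j\bo_{(1,1,\dots,1)}\cong\Tt''_j\bo_{(1,1,\dots,1)}$, the error term becomes isomorphic to $\Tt''_{i+1}\ast\Tt''_i\ast\Tt''_{i+1}\ast\Ss''_i\bo_{(1,1,\dots,1)}$, which is zero by the vanishing ${\TTt}''_{i+1}{\TTt}''_i{\TTt}''_{i+1}{\SSs}''_i\bo_\kk\cong0$ listed in relation~(B) of Theorem~\ref{Theorem 5} (equivalently, in Corollary~\ref{mainthmgeo}). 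Combined with the previous paragraph, this yields the braid relation.

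The one genuine subtlety I anticipate is this passage between the two idempotent families: relation~(C) for the primed family produces the term $\Tt'_{i+1}\ast\Tt'_i\ast\Tt'_{i+1}\ast\Ss'_i$, whose vanishing is \emph{not} literally among the primed vanishings of (B) (those only provide $\Tt'_i\ast\Tt'_{i+1}\ast\Tt'_i\ast\Ss'_{i+1}\cong0$), so one is forced to move to the doubly-primed family, where the needed subscript pattern does occur in (B); the legitimacy of this move rests precisely on the coincidence $\Tt'_j\bo_{(1,1,\dots,1)}\cong\Tt''_j\bo_{(1,1,\dots,1)}\cong\Tt_j$ and hence $\Ss'_j\bo_{(1,1,\dots,1)}\cong\Ss''_j\bo_{(1,1,\dots,1)}$ at the full flag variety. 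Everything else is a direct appeal to Theorem~\ref{Theorem 5}, Corollary~\ref{mainthmgeo}, and Proposition~\ref{Proposition 2}; alternatively, the same bridging applied to the third triangle of~(C) for the $''$-family gives a second, independent proof.
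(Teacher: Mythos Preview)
Your proof is correct and follows essentially the same approach as the paper: identify $\Tt'_i\bo_{(1,1,\dots,1)}\cong\Tt_i\cong\Tt''_i\bo_{(1,1,\dots,1)}$ via (\ref{eq p2}), deduce $\Ss'_i\bo_{(1,1,\dots,1)}\cong\Ss''_i\bo_{(1,1,\dots,1)}$ from uniqueness of complementary idempotents, and then use the double-primed vanishing in (B) to kill the error term in the first triangle of (C). The paper's version cites both the first and third triangles of (C) together with both vanishings, but your single-triangle argument already suffices, and you correctly flag the key subtlety---that the needed subscript pattern forces the passage from the primed to the double-primed family.
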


\begin{proof}
For the full flag variety case $G/B=Fl_{(1,1,...,1)}(\CC^N)$, from (\ref{eq p2}) we have 
\begin{equation*}
{\Tt}'_{i}\bo_{(1,1,...,1)}={\Ee}_{i,0}\ast {\Ff}_{i,1} \ast ({\Psi}^{+}_{i})^{-1} \bo_{(1,1,...,1)}={\Ff}_{i,0}\ast {\Ee}_{i,1} \ast ({\Psi}^{-}_{i})^{-1}\bo_{(1,1,...,1)}={\Tt}''_{i}\bo_{(1,1,...,1)}.
\end{equation*}

By Corollary \ref{mainthmgeo}, we know that $({\Tt}'_{i}\bo_{(1,1,...,1)},{\Ss}'_{i}\bo_{(1,1,...,1)})$ and $({\Tt}''_{i}\bo_{(1,1,...,1)},{\Ss}''_{i}\bo_{(1,1,...,1)})$ are pairs of complementary idempotents in $\Dd^b(G/B \times G/B)$. Since ${\Tt}'_{i}\bo_{(1,1,...,1)}={\Tt}''_{i}\bo_{(1,1,...,1)}$, we obtain that ${\Ss}'_{i}\bo_{(1,1,...,1)}={\Ss}''_{i}\bo_{(1,1,...,1)}$. 

Thus by (B) in Theorem \ref{Theorem 5} and Corollary \ref{mainthmgeo}, we obtain
\begin{align*}
&{\Tt}'_{i+1}\ast{\Tt}'_{i}\ast{\Tt}'_{i+1}\ast{\Ss}'_{i}\bo_{(1,1,...,1)}={\Tt}''_{i+1}\ast{\Tt}''_{i}\ast{\Tt}''_{i+1}\ast{\Ss}''_{i}\bo_{(1,1,...,1)}=0, \\
&{\Tt}''_{i}\ast{\Tt}''_{i+1}\ast{\Tt}''_{i}\ast{\Ss}''_{i+1}\bo_{(1,1,...,1)}={\Tt}'_{i}\ast{\Tt}'_{i+1}\ast{\Tt}'_{i}\ast{\Ss}'_{i+1}\bo_{(1,1,....,1)}=0.
\end{align*} 

The above tells us that the correction terms (or the third terms) of the first and third exact triangles in (C) from Theorem \ref{Theorem 5} are zero, which implies the braid relations.
\end{proof}

\subsection{Miscellany and generalizations}

In this subsection, we deduce some related results from the discussions and the main theorem in the previous subsection.

Note that in the shifted 0-affine algebra $\dot{\brmU}_{0,N}(L\SL_n)$, the loop-like generators $e_{i,r}1_{\kk}$ and $f_{i,s}1_{\kk}$ are defined within the ranges $-k_i-1 \leq r \leq 0$ and $0 \leq s \leq k_{i+1}+1$, respectively. Thus from the definition of ${\TTt}'_{i}\bo_{\kk}$, ${\TTt}''_{i}\bo_{\kk}$  in (\ref{Tk'}), (\ref{Tk''}), it is also natural to consider the compositions ${\E}_{i,r}\bo_{\kk-\alpha_{i}}({\E}_{i,r}\bo_{\kk-\alpha_{i}})^{R}$ for $-k_i-2 \leq r \leq 0$.

More precisely, we can define 
\begin{align}
   &{\TTt}'_{i,r}\bo_{\kk} \coloneqq  {\E}_{i,r}\bo_{\kk-\alpha_{i}}({\E}_{i,r}\bo_{\kk-\alpha_{i}})^{R} \label{Tk'r}\\ 
   &{\TTt}''_{i,s}\bo_{\kk} \coloneqq  {\F}_{i,s}\bo_{\kk+\alpha_{i}}({\F}_{i,s}\bo_{\kk+\alpha_{i}})^{R} \label{Tk''s}
\end{align} for all $-k_i-2 \leq r \leq 0$, $0 \leq s \leq k_{i+1}+2$ such that ${\TTt}'_{i,0}\bo_{\kk} ={\TTt}'_{i}\bo_{\kk}$, ${\TTt}''_{i,0}\bo_{\kk} ={\TTt}''_{i}\bo_{\kk}$.

Using the adjunction condition (4), condition (8)(a), and condition (9)(a) from Definition \ref{catshifted0}, we obtain that 
\begin{align}
\begin{split} \label{T'kriso}
    {\TTt}'_{i,r}\bo_{\kk}&= {\E}_{i,r}\bo_{\kk-\alpha_{i}}({\E}_{i,r}\bo_{\kk-\alpha_{i}})^{R}\cong {\E}_{i,r}({\spi}^{+}_{i})^{r+1}{\F}_{i,k_{i+1}+1}({\spi}^{+}_{i})^{-r-2}\bo_{\alpha_{i}}[-r-1] \\
   &\cong ({\spi}^{+}_{i})^{r+1}{\E}_{i,-1}{\F}_{i,k_{i+1}+1}({\spi}^{+}_{i})^{-r-2}\bo_{\alpha_{i}} \cong ({\spi}^{+}_{i})^{r}{\E}_{i,0}{\spi}^{+}_{i}{\F}_{i,k_{i+1}+1}({\spi}^{+}_{i})^{-r-2}\bo_{\alpha_{i}}[-1] \\
   &\cong ({\spi}^{+}_{i})^{r}{\E}_{i,0}{\F}_{i,k_{i+1}}({\spi}^{+}_{i})^{-r-1}\bo_{\alpha_{i}} \cong ({\spi}^{+}_{i})^{r} {\TTt}'_{i}({\spi}^{+}_{i})^{-r}\bo_{\kk}.
   \end{split}
\end{align} Similarly, we obtain
\begin{equation} \label{T''ksiso}
    {\TTt}''_{i,s}\bo_{\kk} \cong ({\spi}^{-}_{i})^{-s} {\TTt}''_{i}({\spi}^{-}_{i})^{s}\bo_{\kk}.
\end{equation}

On the other hand, we define
\begin{align}
&{\SSs}'_{i,s}\bo_{\kk} \coloneqq  {\F}_{i,s}\bo_{\kk+\alpha_{i}}({\F}_{i,s}\bo_{\kk+\alpha_{i}})^{L}, \label{Sk's}\\ 
&{\SSs}''_{i,r}\bo_{\kk} \coloneqq  {\E}_{i,r}\bo_{\kk-\alpha_{i}}({\E}_{i,r}\bo_{\kk-\alpha_{i}})^{L}. \label{Sk''r}    
\end{align} where $0 \leq s \leq k_{i+1}+2$ and $-k_{i}-2 \leq r \leq 0$. Then a similar calculation shows that
\begin{align}
\begin{split} \label{S'ksiso}
{\SSs}'_{i,s}\bo_{\kk}&= {\F}_{i,s}\bo_{\kk+\alpha_{i}}({\F}_{i,s}\bo_{\kk+\alpha_{i}})^{L} \cong {\F}_{i,s}({\spi}^{+}_{i})^{-s+k_{i+1}}{\E}_{i,0}({\spi}^{+}_{i})^{-s-k_{i+1}-1}\bo_{\kk+\alpha_{i}}[-s+k_{i+1}+1] \\
& \cong ({\spi}^{+}_{i})^{-s+k_{i+1}}{\F}_{i,k_{i+1}}{\E}_{i,0}({\spi}^{+}_{i})^{-s-k_{i+1}-1}\bo_{\kk+\alpha_{i}}[1]=({\spi}^{+}_{i})^{-s+k_{i+1}}{\SSs}'_{i}({\spi}^{+}_{i})^{s-k_{i+1}}\bo_{\kk}
\end{split}
\end{align} and 
\begin{equation} \label{S''kriso}
{\SSs}''_{i,r}\bo_{\kk} \cong  ({\spi}^{-}_{i})^{r+k_{i}}{\SSs}''_{i}({\spi}^{-}_{i})^{-r-k_{i}}\bo_{\kk}.  
\end{equation}

In conclusion, we obtain the following corollary.

\begin{corollary} \label{catcomm}
For $\kk \in C(n,N)$ and $r, s \in \ZZ$ such that $r+s=k_{i+1}$, we have the following families of categorical idempotents
\begin{equation*}
({\TTt}'_{i,r}\bo_{\kk}, {\SSs}'_{i,s}\bo_{\kk})= ({\E}_{i,r}\bo_{\kk-\alpha_{i}}({\E}_{i,r}\bo_{\kk-\alpha_{i}})^{R}, {\F}_{i,s}\bo_{\kk+\alpha_{i}}({\F}_{i,s}\bo_{\kk+\alpha_{i}})^{L}) 
\end{equation*} for $1 \leq i\leq n-1$. If $r, s \in \ZZ$ such that $r+s=-k_{i}$, we also have the following families of categorical idempotents
\begin{equation*}
({\TTt}''_{i,s}\bo_{\kk}, {\SSs}''_{i,r}\bo_{\kk})= ({\F}_{i,s}\bo_{\kk+\alpha_{i}}({\F}_{i,s}\bo_{\kk+\alpha_{i}})^{R}, {\E}_{i,r}\bo_{\kk-\alpha_{i}}({\E}_{i,r}\bo_{\kk-\alpha_{i}})^{L}) 
\end{equation*} for $1 \leq i\leq n-1$.  They also satisfy the same relations (A), (B), and (C) in Theorem \ref{Theorem 5}. For example, when $r+s=k_{i+1}$, we have 
\begin{align*}
&{\TTt}'_{i,r}{\TTt}'_{j,r}\bo_{\kk} \cong {\TTt}'_{j,r}{\TTt}'_{i,r}\bo_{\kk} \ \text{for} \ |i-j| \geq 2, \\
&{\SSs}'_{i+1,s}{\SSs}'_{i,s}{\TTt}'_{i+1,r}{\TTt}'_{i,r}\bo_{\kk} \cong {\SSs}'_{i+1,s}{\SSs}'_{i,s}{\SSs}'_{i+1,s}{\TTt}'_{i,r}\bo_{\kk} \cong {\SSs}'_{i+1,s}{\TTt}'_{i,r}{\TTt}'_{i+1,r}{\TTt}'_{i,r}\bo_{\kk} \cong 0,
\end{align*} and the exact triangle
\begin{equation*}
{\TTt}'_{i,r}{\TTt}'_{i+1,r}{\TTt}'_{i,r}\bo_{\kk} \rightarrow {\TTt}'_{i+1,r}{\TTt}'_{i,r}{\TTt}'_{i+1,r}\bo_{\kk} \rightarrow {\TTt}'_{i+1,r}{\TTt}'_{i,r}{\TTt}'_{i+1,r}{\SSs}'_{i,s}\bo_{\kk}.
\end{equation*} Finally, the cohomologies of ${\TTt}'_{i,r}\bo_{\kk}$, ${\SSs}'_{i,s}\bo_{\kk}$, ${\TTt}''_{i,r}\bo_{\kk}$, and ${\SSs}''_{i,s}\bo_{\kk}$ are isomorphic to the cohomologies of  ${\TTt}'_{i}\bo_{\kk}$, ${\SSs}'_{i}\bo_{\kk}$, ${\TTt}''_{i}\bo_{\kk}$, and ${\SSs}''_{i}\bo_{\kk}$ respectively.
\end{corollary}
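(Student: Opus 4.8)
The plan is to deduce the corollary from Theorem \ref{Theorem 5} via a single soft principle: conjugation $X\mapsto PXP^{-1}$ by an invertible $1$-morphism $P$ is a triangulated, $\CC$-linear monoidal auto-equivalence of $\Hom(\Kk(\kk),\Kk(\kk))$ (with composition as the monoidal product) which fixes the unit $\bo_{\kk}$, sends exact triangles to exact triangles, and induces isomorphisms on all $\Hom$-spaces. The relevant $P$'s are powers of $\spi^{\pm}_{i}$: equations \eqref{T'kriso}--\eqref{S''kriso} already exhibit $\TTt'_{i,r}\bo_{\kk}\cong(\spi^{+}_{i})^{r}\TTt'_{i}(\spi^{+}_{i})^{-r}\bo_{\kk}$, $\SSs'_{i,s}\bo_{\kk}\cong(\spi^{+}_{i})^{k_{i+1}-s}\SSs'_{i}(\spi^{+}_{i})^{-(k_{i+1}-s)}\bo_{\kk}$, together with the parallel formulas in $\spi^{-}_{i}$ for the $''$-family. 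The bookkeeping remark that makes the statement clean is that the constraint $r+s=k_{i+1}$ (resp. $r+s=-k_{i}$) forces the conjugating exponents attached to the two members of each pair to coincide.

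Given this, the idempotent claim is immediate: conjugating the idempotent triangle \eqref{triTk'} for $(\TTt'_{i}\bo_{\kk},\SSs'_{i}\bo_{\kk})$ by $(\spi^{+}_{i})^{r}$ produces exactly the triangle $\TTt'_{i,r}\bo_{\kk}\to\bo_{\kk}\to\SSs'_{i,s}\bo_{\kk}$, and a monoidal auto-equivalence carries a pair of complementary idempotents to a pair of complementary idempotents; conjugating \eqref{TriTk''} by $(\spi^{-}_{i})^{-s}$ does the $''$-family, and the case $r+s=-k_{i}$ is identical. For the cohomology statement I would argue exactly as in Corollary \ref{catcohTS}: Proposition \ref{cohTS} together with adjunction gives $\Hom(\TTt'_{i,r}\bo_{\kk},\TTt'_{i,r}\bo_{\kk})\cong\End(\E_{i,r}\bo_{\kk-\alpha_{i}})$, and iterating condition (8)(a) realizes $\E_{i,r}\bo_{\kk-\alpha_{i}}$ as a homological shift of an honest conjugate of $\E_{i,0}\bo_{\kk-\alpha_{i}}$, so the endomorphism algebra is unchanged and equals the one computed in Corollary \ref{mainthmgeo}; the three remaining operators are treated identically (using condition (9)(a) for the $\F$'s).

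The substance is relations (A), (B), (C), which I would transport from Theorem \ref{Theorem 5}. A relation involving two indices $i,j$ with $|i-j|\ge 2$ transports cleanly under conjugation by the single object $(\spi^{+}_{i})^{r}(\spi^{+}_{j})^{r}$: the $\spi^{\pm}$ commute among themselves (condition (5)) and $\spi^{\pm}_{j}$ commutes strictly with $\E_{i,\bullet},\F_{i,\bullet}$ (conditions (8)(c),(9)(c)), so all index-$i$ and index-$j$ operators move simultaneously to their shifts and Theorem \ref{Theorem 5}(A) transports verbatim. The adjacent-index relations in (B) and (C) are where the only real subtlety lies — and this is the step I expect to be the main obstacle — because $\spi^{+}_{i}$ does not commute with the index-$(i+1)$ operators but merely re-indexes them (conditions (8)(b),(9)(b)), so a naive single conjugation need not land on the parameters appearing in the statement. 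I would handle this either by choosing the conjugating object to absorb the re-indexing (for instance $(\spi^{+}_{i})^{r}(\spi^{+}_{i+1})^{2r}$ sends $\TTt'_{i}\mapsto\TTt'_{i,r}$, $\SSs'_{i}\mapsto\SSs'_{i,s}$ and $\TTt'_{i+1}\mapsto\TTt'_{i+1,r}$, which already suffices for the first triangle in (C)), or, more robustly, by rerunning the short computations from the proof of Theorem \ref{Theorem 5} (those using conditions (6)--(10) and the idempotent triangles of (11)) with the internal labels of the $\E$'s and $\F$'s shifted; those arguments go through unchanged, and the three vanishings in (B) then follow from a single one together with the complementary-idempotent property just established, exactly as in Theorem \ref{Theorem 5}.
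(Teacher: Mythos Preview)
Your approach is essentially the paper's: reduce everything to the $r=0$ case of Theorem \ref{Theorem 5} via the conjugation formulas \eqref{T'kriso}--\eqref{S''kriso}, exploiting that $r+s=k_{i+1}$ makes the two conjugating exponents coincide. The paper carries this out exactly as you do for the weak-idempotent and orthogonality claims, and for (A), (B), (C) and the cohomologies it simply writes ``apply the conjugation by $\spi^{+}$'' without further comment.

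You are in fact more careful than the paper on one point. Your worry about the adjacent-index relations is legitimate: conjugation by $(\spi^{+}_{i})^{r}$ alone sends $\TTt'_{i}\mapsto\TTt'_{i,r}$ but, via conditions (8)(b) and (9)(b), sends $\TTt'_{i+1}\mapsto\TTt'_{i+1,-r}$, so a naive single conjugation does not land on the stated parameters. Your proposed remedy of conjugating by a composite such as $(\spi^{+}_{i})^{r}(\spi^{+}_{i+1})^{2r}$ (using that $\spi^{+}_{i+1}$ commutes with the index-$i$ operators by the second lines of (8)(b), (9)(b)) is the right fix and is presumably what the paper has in mind; your fallback of rerunning the short vanishing computations from Theorem \ref{Theorem 5} with shifted internal labels also works and is equally short. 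Either way, the substance and the route are the same as the paper's.
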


\begin{proof}
We only prove the case where $r+s=k_{i+1}$. 

By Theorem \ref{Theorem 5} we know that $({\TTt}'_{i}\bo_{\kk},{\SSs}'_{i}\bo_{\kk})$ is a categorical idempotents. From (\ref{T'kriso}) we obtain that 
\begin{align*}
  {\TTt}'_{i,r}{\TTt}'_{i,r}\bo_{\kk} &\cong ({\spi}^{+}_{i})^{r} {\TTt}'_{i}({\spi}^{+}_{i})^{-r}\bo_{\kk}  ({\spi}^{+}_{i})^{r} {\TTt}'_{i}({\spi}^{+}_{i})^{-r}\bo_{\kk}  \\
  &= ({\spi}^{+}_{i})^{r} {\TTt}'_{i}{\TTt}'_{i}({\spi}^{+}_{i})^{-r}\bo_{\kk} \cong ({\spi}^{+}_{i})^{r} {\TTt}'_{i}({\spi}^{+}_{i})^{-r}\bo_{\kk} \cong {\TTt}'_{i,r}\bo_{\kk}, 
\end{align*} and similarly ${\SSs}'_{i,s}{\SSs}'_{i,}\bo_{\kk} \cong {\SSs}'_{i,}\bo_{\kk}$. Thus ${\TTt}'_{i,r}\bo_{\kk}$ and ${\SSs}'_{i,}\bo_{\kk}$ are weak idempotents. 

Next, using $r+s=k_{i+1}$ we have 
\begin{equation*}
{\TTt}'_{i,r}{\SSs}'_{i,s}\bo_{\kk} \cong  ({\spi}^{+}_{i})^{r} {\TTt}'_{i}({\spi}^{+}_{i})^{-r} ({\spi}^{+}_{i})^{-s+k_{i+1}}{\SSs}'_{i}({\spi}^{+}_{i})^{s-k_{i+1}}\bo_{\kk} \cong ({\spi}^{+}_{i})^{r} {\TTt}'_{i}{\SSs}'_{i}({\spi}^{+}_{i})^{s-k_{i+1}}\bo_{\kk} \cong 0
\end{equation*} and ${\SSs}'_{i,s}{\TTt}'_{i,r}\bo_{\kk} \cong 0$ similarly. Thus we prove that $({\TTt}'_{i,r}\bo_{\kk}, {\SSs}'_{i,s}\bo_{\kk})$ is a categorical idempotents.

Since ${\TTt}'_{i,r}\bo_{\kk}$ and ${\SSs}'_{i,s}\bo_{\kk}$ are isomorphic to conjugations of ${\TTt}'_{i}\bo_{\kk}$ and ${\SSs}'_{i}\bo_{\kk}$ by $({\spi}^{+})^{r}$ and $({\spi}^{+})^{-s+k_{i+1}}$ from (\ref{T'kriso}) and (\ref{S'ksiso}), respectively. For the proof of relations (A), (B), and (C) for $({\TTt}'_{i,r}\bo_{\kk}, {\SSs}'_{i,s}\bo_{\kk})$, we simply apply the conjugation by ${\spi}^{+}$ to relations (A), (B), and (C) for $({\TTt}'_{i}\bo_{\kk}, {\SSs}'_{i}\bo_{\kk})$ in Theorem \ref{Theorem 5}, and similarly for their cohomologies.
\end{proof}

\begin{remark} \label{catcomm'} 
In fact, conditions (11)(a) and (11)(b) in Definition \ref{catshifted0} are equivalent to the fact that $({\TTt}'_{i,r}\bo_{\kk}, {\SSs}'_{i,s}\bo_{\kk})$ and $({\TTt}''_{i,s}\bo_{\kk}, {\SSs}''_{i,r}\bo_{\kk})$ are categorical idempotents, respectively.
\end{remark}

\section{Application to semiorthogonal decomposition} \label{Section 6}

In this section, we apply the main results, in particular Corollary \ref{mainthmgeo}, to provide examples for Kuznetsov's result in Proposition \ref{sodfm}. 

Observe that $\Dd^b(\Fl_{\kk}(\CC^N)\times \Fl_{\kk}(\CC^N))$ acts on $\Dd^b(\Fl_{\kk}(\CC^N)$ by FM transforms. By Example \ref{ssod}, the pairs of complementary idempotents in Corollary \ref{mainthmgeo} give the following $\SOD$s of $\Dd^b(\Fl_{\kk}(\CC^N))$
\begin{align}
    \Dd^b(\Fl_{\kk}(\CC^N)) &= \langle \text{Im}\Phi_{{\Ss}'_{i}\bo_{\kk}}, \text{Im}\Phi_{{\Tt}'_{i}\bo_{\kk}} \rangle, \label{sod1}\\
    &= \langle \text{Im}\Phi_{{\Ss}''_{i}\bo_{\kk}}, \text{Im}\Phi_{{\Tt}''_{i}\bo_{\kk}} \rangle, \label{sod2}
\end{align} for all $1 \leq i \leq n-1$, where we use the notation $\text{Im}\Phi_{{\Tt}'_{i}\bo_{\kk}}$ to denote the full subcategory generated by the essential images of $\Phi_{{\Tt}'_{i}\bo_{\kk}}$. 

It would be interesting to understand those component categories in (\ref{sod1}) and (\ref{sod2}) explicitly. For example, what are the generators of those component categories?

In the rest of this article, we address the above question in the special case where $n=2$. In this case $\kk=(k_1,k_2)$ with $k_1+k_2=N$, so $\Fl_{\kk}(\CC^N)$ is the Grassmannian. Since $k_2=N-k_1$, we will use the notation $(k,N-k)$ and $\Gr(k,N)$ instead of $(k_1,N-k_1)$ and $\Fl_{\kk}(\CC^N)$.

\subsection{The Kapranov exceptional collection}
In this section, we recall the exceptional collection constructed by Kapranov \cite{Ka1} for $\Dd^b(\Gr(k,N))$. 

Let $\blam=(\lambda_{1},...,\lambda_{n})$ be a non-increasing sequence of positive integers. We can represent $\blam$ as a Young diagram with $n$ rows, aligned on the left, such that the $i$th row
has exactly $\lambda_{i}$ cells. The size of $\blam$, denoted by $|\blam|$, is the number $|\blam|=\sum_{i=1}^{n} \lambda_{i}$. The transpose diagram $\blam^*$ is obtained by exchanging rows and columns of $\blam$.

For a Young diagram $\blam$, we define the notion of its associated Schur functor.  For more details about Schur functors, we refer the readers to \cite[Chapter 4, Chapter 6]{FH}.

\begin{definition}
	Let $n \geq 1$ be a positive integer and $\blam=(\lambda_{1},...,\lambda_{n})$ be a sequence of non-increasing positive integers. The \textit{Schur functor} $\s_{\blam}$ associated to $\blam$ is defined as a functor 
	\[
	\s_{\blam}:\text{Vect}_{\CC} \rightarrow \text{Vect}_{\CC}
	\] 
	such that for any vector space $V$, $\s_{\blam}V$ coincides with the image of the Young symmetrizer $c_{\blam}$ in the space of tensors of $V$ of rank $n$: i.e., $\s_{\blam}V=\text{Im} (c_{\blam}|_{V^{\otimes n}})$.
\end{definition}

First, for the case where $n=2$, they are just the Grassmannians
\[
\Gr(k,N)=\{0 \subset V \subset \CC^N \ | \ \tdim_{\CC} V=k\}
\] of $k$-dimensional subspaces in $\CC^N$.

Denote $\V$ to be the tautological rank $k$ bundle on $\Gr(k,N)$ and $\CC^N/\V$ to be the tautological rank $N-k$ quotient bundle.  For non-negative integers $a, \ b \geq 0$, we denote by $P(a,b)$ the set of Young diagrams $\blam$ such that $\lambda_{1} \leq a$ and $\lambda_{b+1} =0$. Then we have the following result.

\begin{proposition}[\cite{Ka1}]  \label{exccoll}  
	The following collection of sheaves 
\begin{equation} \label{kapexc}
R_{(k,N-k)}=\{\s_{\blam}\V\ | \ \blam \in P(N-k,k) \} 
\end{equation} is a strong full exceptional collection in $\Dd^b(\Gr(k,N))$.  Its dual exceptional collection is given by 
\begin{equation} \label{kapdualexc}
R'_{(k,N-k)} = \{\s_{\bmu}\CC^N/\V[-|\bmu|] \ | \ \bmu \in P(k,N-k) \}.
\end{equation}
\end{proposition}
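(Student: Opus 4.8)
The plan is to combine the Borel--Weil--Bott theorem with a Koszul resolution of the diagonal, following Kapranov's original argument. First I would set up the homogeneous-bundle dictionary: realize $\Gr(k,N)=\GLL_{N}/P$ with $P$ the maximal parabolic whose Levi is $\GLL_{k}\times\GLL_{N-k}$, so that $\V^{\vee}$ is the homogeneous bundle attached to the standard representation of the $\GLL_{k}$-block and $\CC^{N}/\V$ the one attached to the standard representation of the $\GLL_{N-k}$-block. Then every tensor product of bundles of the form $\s_{\blam}\V^{\vee}$ and $\s_{\bmu}(\CC^{N}/\V)$ (and their duals) decomposes, via the Cauchy formula and the Littlewood--Richardson rule, into a direct sum of homogeneous bundles $\mathcal{E}_{\chi}$ indexed by integral Levi weights $\chi=(\chi_{1}\ge\cdots\ge\chi_{k}\,;\,\chi_{k+1}\ge\cdots\ge\chi_{N})$, and the cohomology of each $\mathcal{E}_{\chi}$ is read off from Borel--Weil--Bott: writing $\rho=(N-1,N-2,\dots,0)$, the complex $R\Gamma(\Gr(k,N),\mathcal{E}_{\chi})$ vanishes if $\chi+\rho$ has a repeated entry, and otherwise is an irreducible $\GLL_{N}$-module placed in the single cohomological degree $\ell(w)$, where $w$ is the unique permutation making $\chi+\rho$ strictly decreasing.

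Next I would establish strong exceptionality and semiorthogonality of $R_{(k,N-k)}$. For $\blam,\bmu\in P(N-k,k)$ one has $R\Hom(\s_{\blam}\V,\s_{\bmu}\V)\cong R\Gamma(\Gr(k,N),\s_{\blam}\V^{\vee}\otimes\s_{\bmu}\V)$, and $\s_{\blam}\V^{\vee}\otimes\s_{\bmu}\V$ splits into homogeneous summands $\mathcal{E}_{\chi}$ with $\chi=(\nu_{1},\dots,\nu_{k},0,\dots,0)$, where $\nu$ runs over the (in general not non-negative) weights occurring in $\s_{\blam}\CC^{k}\otimes(\s_{\bmu}\CC^{k})^{\vee}$, so that $\nu_{k}\ge-\mu_{1}$, and $|\nu|=|\blam|-|\bmu|$ when $\nu$ is a genuine partition. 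The key point is that the box hypothesis $\mu_{1}\le N-k$ forces $\nu_{k}+(N-k)$ to collide with the tail $\{0,1,\dots,N-k-1\}$ of $\chi+\rho$ as soon as $\nu_{k}<0$; hence only genuine partitions $\nu$ contribute, for which $\chi+\rho$ is automatically strictly decreasing, so every contribution sits in degree $0$. This gives $\Ext^{>0}(\s_{\blam}\V,\s_{\bmu}\V)=0$ for all pairs (strongness), $\Hom(\s_{\blam}\V,\s_{\blam}\V)=\CC$ from the summand $\nu=\varnothing$ (exceptionality), and $\Hom(\s_{\blam}\V,\s_{\bmu}\V)=0$ whenever $|\blam|<|\bmu|$, or $|\blam|=|\bmu|$ with $\blam\ne\bmu$; thus $R_{(k,N-k)}$ is a strong exceptional collection once its members are ordered by non-increasing $|\blam|$.

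For fullness I would resolve the diagonal: the tautological composite $p_{1}^{*}\V\hookrightarrow\mathcal{O}_{\Gr\times\Gr}^{\oplus N}\twoheadrightarrow p_{2}^{*}(\CC^{N}/\V)$ is a regular section of $\mathcal{H}om(p_{1}^{*}\V,p_{2}^{*}(\CC^{N}/\V))$, a bundle of rank $k(N-k)=\dim\Gr(k,N)=\tcodim_{\Gr\times\Gr}\Delta$, vanishing precisely along $\Delta$; its Koszul complex resolves $\mathcal{O}_{\Delta}$, and by the Cauchy formula its $p$-th term is $\bigoplus_{|\blam|=p,\ \blam\subseteq(N-k)^{k}}\s_{\blam}(p_{1}^{*}\V)\boxtimes\s_{\blam^{*}}\big((p_{2}^{*}(\CC^{N}/\V))^{\vee}\big)$. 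Applying $Rp_{1*}(-\otimes p_{2}^{*}\mathcal{G})$ to this resolution, together with the projection formula, exhibits an arbitrary $\mathcal{G}\in\Dd^{b}(\Gr(k,N))$ as a finite iterated cone of objects $\s_{\blam}\V\otimes R\Gamma\big(\Gr(k,N),\s_{\blam^{*}}((\CC^{N}/\V)^{\vee})\otimes\mathcal{G}\big)$, proving that $R_{(k,N-k)}$ generates $\Dd^{b}(\Gr(k,N))$; with the previous paragraph it is a full strong exceptional collection. Being full, it has a unique right dual collection in the sense of Definition \ref{definition 7}, and it remains to identify it with $R'_{(k,N-k)}$: for a suitable re-indexing bijection $P(N-k,k)\to P(k,N-k)$, $\blam\mapsto\bmu$, one checks the defining property by one more Borel--Weil--Bott computation, since $R\Hom\big(\s_{\bmu}(\CC^{N}/\V)[-|\bmu|],\s_{\blam}\V\big)\cong R\Gamma\big(\Gr(k,N),\s_{\bmu}(\CC^{N}/\V)^{\vee}\otimes\s_{\blam}\V\big)[|\bmu|]$, whose integrand is already an irreducible homogeneous bundle $\mathcal{E}_{\chi}$ with $\chi=(-\lambda_{k},\dots,-\lambda_{1}\,;\,-\mu_{N-k},\dots,-\mu_{1})$; regularity of $\chi+\rho$ and the length of the sorting permutation are arranged exactly so that this is $\CC$ in degree $0$ when the indices match and $0$ otherwise.

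I expect the main obstacle to be the Borel--Weil--Bott bookkeeping in the two middle steps: keeping the homogeneous-bundle/weight conventions internally consistent, and verifying carefully that the box conditions $P(N-k,k)$ and $P(k,N-k)$ are precisely what make all the spurious negative-weight summands acquire a singular — hence vanishing — cohomology, so that the $\Ext$-groups collapse as required and the dual collection comes out with exactly the stated shifts $[-|\bmu|]$.
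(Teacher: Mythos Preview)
The paper does not give its own proof of this proposition: it is quoted as a known result of Kapranov \cite{Ka1} and used as input for the computations in Section~\ref{Section 6}. Your proposal is a faithful outline of Kapranov's original argument --- Borel--Weil--Bott for the $\Ext$-vanishing and the Koszul resolution of $\Oo_{\Delta}$ via the regular section of $\mathcal{H}om(p_{1}^{*}\V,p_{2}^{*}(\CC^{N}/\V))$ for fullness --- so there is no genuine discrepancy to analyze. Your bookkeeping in the semiorthogonality step (in particular the observation that any summand with $\nu_{k}<0$ forces a collision in $\chi+\rho$ because $-(N-k)\le\nu_{k}\le-1$) is correct and is exactly where the box constraint $\blam\in P(N-k,k)$ enters; the same mechanism handles the dual-collection identification, as you note.
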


Since an exceptional collection leads to a $\SOD$, we obtain the following $\SOD$s of $\Dd^b(\Gr(k,N))$
\begin{align*}
    \Dd^b(\Gr(k,N))&=\langle \s_{\blam}\V \rangle_{\blam \in P(N-k,k)},  \\
    &=\langle \s_{\bmu}\CC^N/\V[-|\bmu|] \rangle_{\bmu \in P(k,N-k)}.
\end{align*}

\subsection{Calculation of the action}

Since $n=2$, we only have $i=1$. So we will ignore the subscript $i$ for simplicity. Then
we only have two pairs of complementary idempotents in $\Dd^b(\Gr(k,N) \times \Gr(k,N))$, they are 
\begin{align*}
({\Tt}'\bo_{(k,N-k)} \coloneqq {\Ee}_{0}\ast {\Ff}_{N-k} \ast ({\Psi}^{+})^{-1} \bo_{(k,N-k)}, \ {\Ss}'\bo_{(k,N-k)} \coloneqq {\Ff}_{N-k} \ast {\Ee}_{0} \ast ({\Psi}^{+})^{-1} \bo_{(k,N-k)}[1]), \\
({\Tt}''\bo_{(k,N-k)} \coloneqq {\Ff}_{0}\ast {\Ee}_{-k} \ast ({\Psi}^{-})^{-1}\bo_{(k,N-k)}, \ {\Ss}''\bo_{(k,N-k)} \coloneqq {\Ee}_{-k} \ast {\Ff}_{0}\ast  ({\Psi}^{-})^{-1}\bo_{(k,N-k)}[1]),
\end{align*} and thus we have the following $\SOD$s of $\Dd^b(\Gr(k,N))$
\begin{align}
\Dd^b(\Gr(k,N)) &= \langle \text{Im}\Phi_{{\Ss}'\bo_{(k,N-k)}}, \text{Im}\Phi_{{\Tt}'\bo_{(k,N-k)}} \rangle,  \label{sodk1} \\
&= \langle \text{Im}\Phi_{{\Ss}''\bo_{(k,N-k)}}, \text{Im}\Phi_{{\Tt}''\bo_{(k,N-k)}} \rangle, \label{sodk2}
\end{align}

In this subsection, we calculate the action of the FM transforms $\Phi_{{\Tt}'\bo_{(k,N-k)}}$ and $\Phi_{{\Tt}''\bo_{(k,N-k)}}$ on the exceptional collections in Theorem \ref{exccoll}, and by property of complementary idempotents, the action of $\Phi_{{\Ss}'\bo_{(k,N-k)}}$ and $\Phi_{{\Ss}''\bo_{(k,N-k)}}$ will be determined by $\Phi_{{\Tt}'\bo_{(k,N-k)}}$ and $\Phi_{{\Tt}''\bo_{(k,N-k)}}$ respectively.

The key observation we use is to interpret the Kapranov exceptional collection in terms of the categorical $\dot{\brmU}_{0,N}(L\SL_2)$ action via convolution of FM kernels. More precisely, by using the Borel-Weil-Bott Theorem, in the Grassmannian case, we have 
\begin{align}
    & \s_{\blam}\V \cong \Ff_{\lambda_{1}} \ast ... \ast \Ff_{\lambda_{k}} \bo_{(0,N)} \in \Dd^b(\Gr(0,N) \times \Gr(k,N)),  \label{bwbkap1} \\
    & \s_{\bmu}(\CC^N/\V)^{\vee}\cong \Ee_{-\mu_{1}} \ast ... \ast \Ee_{-\mu_{N-k}} \bo_{(N,0)} \in \Dd^b(\Gr(N,N) \times \Gr(k,N)), \label{bwbkap2}
\end{align} where $\blam=(\lambda_{1},...,\lambda_{k}) \in P(N-k,k)$ and  $\bmu=(\mu_{1},...,\mu_{N-k}) \in P(k,N-k)$.

More precisely, we calculate $\Phi_{{\Tt}'\bo_{(k,N-k)}}$ on the exceptional collection (\ref{kapexc}) and $\Phi_{{\Tt}'\bo_{(k,N-k)}}$ on the dual (as vector bundles) of (\ref{kapdualexc}). Then we have the following result.

\begin{theorem} \label{actofidem}
For $\blam=(\lambda_{1},...,\lambda_{k}) \in P(N-k,k)$ and $\bmu=(\mu_{1},...\mu_{N-k}) \in P(k,N-k)$ we have 
\begin{align*}
\Phi_{{\Tt}'\bo_{(k,N-k)}}(\s_{\blam}\V)&=\begin{cases}
	0 & \text{if} \ \lambda_{1}=N-k \\
	\s_{\blam}\V & \text{if} \ 0 \leq \lambda_{1} \leq N-k-1,
\end{cases} \\
\Phi_{{\Tt}''\bo_{(k,N-k)}}(\s_{\bmu}(\CC^N/\V)^{\vee})&=\begin{cases}
	0 & \text{if} \ \mu_{1}=k \\
	\s_{\bmu}(\CC^N/\V)^{\vee} & \text{if} \ 0 \leq \mu_{1} \leq k-1.
\end{cases}
\end{align*} 
\end{theorem}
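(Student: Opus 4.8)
The plan is to reduce the action of $\Phi_{{\Tt}'\bo_{(k,N-k)}}$ on the Kapranov exceptionals to a computation inside the categorical $\dot{\brmU}_{0,N}(L\SL_2)$-action, using the convolution descriptions (\ref{bwbkap1}) and (\ref{bwbkap2}) together with the factorization ${\Tt}'\bo_{(k,N-k)}={\Ee}_{0}\ast{\Ff}_{N-k}\ast({\Psi}^{+})^{-1}\bo_{(k,N-k)}$. First I would set up the bookkeeping: $\Phi_{{\Tt}'\bo_{(k,N-k)}}(\s_{\blam}\V)$ equals the composition of FM transforms applied to $\Ff_{\lambda_1}\ast\cdots\ast\Ff_{\lambda_k}\bo_{(0,N)}$, so that the whole object becomes $\Ee_{0}\ast\Ff_{N-k}\ast(\Psi^{+})^{-1}\ast\Ff_{\lambda_1}\ast\cdots\ast\Ff_{\lambda_k}\bo_{(0,N)}$, an object living in $\Dd^b(\Gr(0,N)\times\Gr(k,N))$ by Proposition~\ref{Proposition 2}. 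I would first push the $(\Psi^{+})^{-1}$ past all the $\Ff$'s using the commutation relations in condition (10) of Definition~\ref{catshifted0} (for $n=2$ there is no $\spi$--$\F$ interaction across different indices, only the shift-by-$[\pm1]$ relation (10)(a)), collecting the resulting homological shifts, and then analyze the word $\Ee_{0}\ast\Ff_{N-k}\ast\Ff_{\lambda_1}\ast\cdots\ast\Ff_{\lambda_k}$.

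The key case split is on whether $\lambda_1=N-k$ or $\lambda_1\le N-k-1$. When $\lambda_1=N-k$, the leftmost pair of $\Ff$'s is $\Ff_{N-k}\ast\Ff_{\lambda_1}=\Ff_{N-k}\ast\Ff_{N-k}$, and by condition (7)(a) of Definition~\ref{catshifted0} with $r=s$ (after the index shift, $\Ff_{i,r}\Ff_{i,s+1}\cong 0$ when $r=s$, so $\Ff_{N-k}\ast\Ff_{N-k-1}$-type cancellations; I need to be careful to line up indices so the ``$r=s$'' vanishing actually applies), the composition vanishes, forcing $\Phi_{{\Tt}'\bo_{(k,N-k)}}(\s_{\blam}\V)=0$. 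This matches the fact that $\text{Im}\Phi_{{\Ss}'\bo_{(k,N-k)}}$ should be spanned by exactly the $\s_{\blam}\V$ with $\lambda_1=N-k$. When $0\le\lambda_1\le N-k-1$, the plan is to use the other factorization ${\Tt}'\bo_{(k,N-k)}={\E}_{i,0}\bo_{\kk-\alpha_i}({\E}_{i,0}\bo_{\kk-\alpha_i})^R$ from Theorem~\ref{Theorem 5}: since $\Phi_{{\Tt}'\bo_{(k,N-k)}}$ is the counital idempotent, $\Phi_{{\Tt}'\bo_{(k,N-k)}}(X)\cong X$ precisely when $X\in\text{Im}\Phi_{{\Tt}'\bo_{(k,N-k)}}$, equivalently when $X$ lies in the essential image of $\Phi_{\Ee_{0}\bo_{\kk-\alpha_i}}$; so it suffices to exhibit $\s_{\blam}\V$ (for $\lambda_1\le N-k-1$) as $\Ee_{0}\bo_{(k-1,N-k+1)}$ applied to some object on $\Gr(k-1,N)$. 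Concretely, $\Ee_{0}\ast\Ff_{\lambda_1}\cong\Ff_{\lambda_1}\ast\Ee_{0}$ or a triangle from condition (11)(c) when $-k+1\le r+s\le N-k-1$ — and here $r=0$, $s=\lambda_1\le N-k-1$ puts us in the commuting range — lets me slide $\Ee_0$ to the right past $\Ff_{\lambda_1},\dots,\Ff_{\lambda_k}$ until it hits $\bo_{(0,N)}$, where $\Ee_0\bo_{(0,N)}=0$ would be wrong, so instead I peel off the leftmost $\Ff_{N-k}$ and use the counit $\Ff_{N-k}\bo_{(k-1,\dots)}(\Ff_{N-k}\bo)^L\to\bo$ structure of ${\Ss}'$ versus ${\Tt}'$; the cleanest route is to verify directly that applying $(\Ee_0\bo_{(k-1,N-k+1)})^R$ and then $\Ee_0\bo_{(k-1,N-k+1)}$ to $\s_{\blam}\V$ returns $\s_{\blam}\V$, which is a Borel--Weil--Bott computation on the $\PP^{N-k}$-bundle.

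The main obstacle I anticipate is the bookkeeping of spectral-index shifts and homological shifts: the relations (7)(a), (10)(a), (11)(a)(c) each introduce shifts depending on the index offsets, and threading the word $\Ee_0\ast\Ff_{N-k}\ast(\Psi^+)^{-1}\ast\Ff_{\lambda_1}\ast\cdots\ast\Ff_{\lambda_k}$ correctly so that the vanishing triggers in the $\lambda_1=N-k$ case and the identity comes out cleanly (no residual shift) in the $\lambda_1\le N-k-1$ case requires care — a single off-by-one in an index means condition (7)(a) gives a shifted copy rather than $0$. The cleanest way to sidestep most of this is to work geometrically for the identity case: realize $\Phi_{{\Tt}'\bo_{(k,N-k)}}=\pi^*\pi_*$-type composition through the partial flag correspondence and invoke Borel--Weil--Bott to see that $\s_{\blam}\V$ with $\lambda_1\le N-k-1$ is pulled back from $\Gr(k-1,N)$ hence fixed, while $\s_{\blam}\V$ with $\lambda_1=N-k$ has vanishing higher pushforward in the relevant degree. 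I expect the $n=2$ specialization of all the Definition~\ref{catshifted0} relations to be simple enough that both approaches close; the dual statement for $\Phi_{{\Tt}''\bo_{(k,N-k)}}$ on $\s_{\bmu}(\CC^N/\V)^\vee$ is entirely symmetric, swapping the roles of $\Ee$ and $\Ff$ and of $\Psi^+$ and $\Psi^-$, using (\ref{bwbkap2}) in place of (\ref{bwbkap1}).
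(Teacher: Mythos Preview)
Your setup and the vanishing case $\lambda_1=N-k$ are essentially what the paper does: express $\s_{\blam}\V$ via (\ref{bwbkap1}), push $({\spi}^+)^{-1}$ rightward past the ${\F}$'s, and kill ${\F}_{N-k}{\F}_{\lambda_1+1}={\F}_{N-k}{\F}_{N-k+1}$ by condition (7)(a) with $r=s=N-k$. One minor correction: the ${\spi}^+$--${\F}$ commutation you need is condition (9)(a), not (10); condition (10) concerns ${\E}_i{\F}_j$ for $i\neq j$ and is vacuous for $n=2$.

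The genuine gap is in the identity case $0\le\lambda_1\le N-k-1$. You correctly note that sliding ${\E}_0$ to the right via (11)(c) eventually yields ${\E}_0\bo_{(0,N)}=0$, but then dismiss this as ``wrong'' and cast about for alternatives. In fact that vanishing is exactly what is needed --- but only \emph{after} you first invoke the triangle from condition (11)(a),
\[
{\F}_{N-k}{\E}_0\bo_{(k,N-k)} \;\to\; {\E}_0{\F}_{N-k}\bo_{(k,N-k)} \;\to\; {\spi}^+\bo_{(k,N-k)},
\]
and compose on the right with ${\F}_{\lambda_1+1}\cdots{\F}_{\lambda_k+1}({\spi}^+)^{-1}\bo_{(0,N)}[-k]$. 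Note that ${\E}_0$ cannot be slid past ${\F}_{N-k}$ directly: $r+s=N-k=k_{i+1}$ is precisely the boundary case (11)(a), not the commuting range (11)(c), so the triangle is unavoidable --- and it is nothing other than the idempotent triangle (\ref{triTk'}) applied to $\s_{\blam}\V$. In the first term the ${\E}_0$ now \emph{does} sit in the (11)(c) range at every subsequent step (each $\lambda_j+1\le N-k$), so it slides to the far right and hits ${\E}_0\bo_{(0,N)}=0$; this term dies. The third term is ${\spi}^+{\F}_{\lambda_1+1}\cdots{\F}_{\lambda_k+1}({\spi}^+)^{-1}\bo_{(0,N)}[-k]\cong{\F}_{\lambda_1}\cdots{\F}_{\lambda_k}\bo_{(0,N)}$ by (9)(a), which is $\s_{\blam}\V$. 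Hence the middle term --- which is $\Phi_{{\Tt}'}(\s_{\blam}\V)$ --- is isomorphic to $\s_{\blam}\V$. Your alternative route (show $\s_{\blam}\V\in\text{Im}{\E}_0$, or do a geometric Borel--Weil--Bott computation) can be made to work, but requires additional justification you have not supplied; and the ``$\pi^*\pi_*$'' picture is not literally available here since ${\E}_0\bo_{(k+1,N-k-1)}$ is not pullback along a fibration for general $k$.
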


\begin{proof}
We only prove the case for $\Phi_{{\Tt}'\bo_{(k,N-k)}}$ since the argument for the action of $\Phi_{{\Tt}''\bo_{(k,N-k)}}$ is the same. To calculate $\Phi_{{\Tt}'\bo_{(k,N-k)}}(\s_{\blam}\V)$, we use the language of categorical $\dot{\brmU}_{0,N}(L\SL_2)$ action and (\ref{bwbkap1}) and (\ref{bwbkap2}). 

More precisely, by Proposition \ref{Thm catact}, there is a categorical $\dot{\brmU}_{0,N}(L\SL_2)$ action on $\bigoplus_{k} \Dd^b(\Gr(k,N))$. Since ${\Tt}'\bo_{(k,N-k)} \coloneqq {\Ee}_{0}\ast {\Ff}_{N-k} \ast ({\Psi}^{+})^{-1} \bo_{(k,N-k)} \in \Dd^b(\Gr(k,N) \times \Gr(k,N))$ is the FM kernel of $\Phi_{{\Tt}'\bo_{(k,N-k)}}$ and ${\E}_{0}{\F}_{N-k}({\spi}^{+})^{-1}\bo_{(k,N-k)}$, we have $\Phi_{{\Tt}'\bo_{(k,N-k)}} \cong {\E}_{0}{\F}_{N-k}{\spi}^{+}\bo_{(k,N-k)}$. On the other hand, by (\ref{bwbkap1}), we have $\s_{\blam}\V \cong \Ff_{\lambda_{1}} \ast ... \ast \Ff_{\lambda_{k}} \bo_{(0,N)}$ which is a FM kernel for the functor ${\F}_{\lambda_1}....{\F}_{\lambda_{k}}\bo_{(0,N)}$.

Thus calculate $\Phi_{{\Tt}'\bo_{(k,N-k)}}(\s_{\blam}\V)$ is equivalent to calculate ${\E}_{0}{\F}_{N-k}({\spi}^{+})^{-1}{\F}_{\lambda_1}....{\F}_{\lambda_{k}}\bo_{(0,N)}$.

By condition (9)(a), we have 
\begin{equation} \label{comp} 
{\E}_{0}{\F}_{N-k}({\spi}^{+})^{-1}{\F}_{\lambda_1}....{\F}_{\lambda_{k}}\bo_{(0,N)}  \cong {\E}_{0}{\F}_{N-k}{\F}_{\lambda_1+1}....{\F}_{\lambda_{k}+1}({\spi}^{+})^{-1}\bo_{(0,N)}[-k].  
\end{equation} 

Since $\blam \in P(N-k,k)$, we have $0 \leq \lambda_k \leq ... \leq \lambda_1 \leq N-k$. There are two cases.

If $\lambda_1=N-k$, then by condition (7)(a), we have $(\ref{comp})=0$ which prove the first case.

If $0 \leq \lambda_1 \leq N-k-1$, then by condition (11)(a) we have the following exact triangle
\begin{equation*}
    {\F}_{N-k}{\E}_{0}\bo_{(k,N-k)} \rightarrow {\E}_{0}{\F}_{N-k}\bo_{(k,N-k)} \rightarrow {\spi}^{+}\bo_{(k,N-k)}.
\end{equation*} Then we apply ${\F}_{\lambda_1+1}....{\F}_{\lambda_{k}+1}({\spi}^{+})^{-1}\bo_{(0,N)}[-k]$ to it so that we obtain the following exact triangle
\begin{align}
\begin{split} \label{comptri} 
    &{\F}_{N-k}{\E}_{0}{\F}_{\lambda_1+1}....{\F}_{\lambda_{k}+1}({\spi}^{+})^{-1}\bo_{(0,N)}[-k] 
    \rightarrow {\E}_{0}{\F}_{N-k}{\F}_{\lambda_1+1}....{\F}_{\lambda_{k}+1}({\spi}^{+})^{-1}\bo_{(0,N)}[-k] \\ &\rightarrow {\spi}^{+}{\F}_{\lambda_1+1}....{\F}_{\lambda_{k}+1}({\spi}^{+})^{-1}\bo_{(0,N)}[-k] \cong {\spi}^{+}{\F}_{\lambda_1+1}....{\F}_{\lambda_{k}+1}({\spi}^{+})^{-1}\bo_{(0,N)}[-k],
    \end{split}
\end{align} and the middle term is what we want to know.

Since $1 \leq \lambda_{k}+1 \leq ... \leq \lambda_{1}+1 \leq N-k$, by condition (11)(c) we get 
\begin{align*}
 {\E}_{0}{\F}_{\lambda_1+1}....{\F}_{\lambda_{k}+1}({\spi}^{+})^{-1}\bo_{(0,N)} &\cong {\F}_{\lambda_1+1}{\E}_{0}....{\F}_{\lambda_{k}+1}({\spi}^{+})^{-1}\bo_{(0,N)}  \\
 &\cong ... \\
 &\cong {\F}_{\lambda_1+1}....{\F}_{\lambda_{k}+1}{\E}_{0}({\spi}^{+})^{-1}\bo_{(0,N)} \cong 0.
\end{align*} On the other hand, by condition (9)(a) we get
\begin{equation*}
 {\spi}^{+}{\F}_{\lambda_1+1}....{\F}_{\lambda_{k}+1}({\spi}^{+})^{-1}\bo_{(0,N)}[-k] \cong {\F}_{\lambda_1}....{\F}_{\lambda_{k}}\bo_{(0,N)}. 
\end{equation*}

Thus the exact triangle (\ref{comptri}) tells us that ${\E}_{0}{\F}_{N-k}{\F}_{\lambda_1+1}....{\F}_{\lambda_{k}+1}({\spi}^{+})^{-1}\bo_{(0,N)}[-k] \cong {\F}_{\lambda_1}....{\F}_{\lambda_{k}}\bo_{(0,N)}$, which has $\Ff_{\lambda_{1}} \ast ... \ast \Ff_{\lambda_{k}} \bo_{(0,N)} \cong \s_{\blam}\V$ as its FM kernel, and we prove the second case.

\end{proof}

Finally, we have the following result.

\begin{corollary} \label{componentcat}
The generators for the component categories for the two $\SOD$s (\ref{sodk1}) and (\ref{sodk2}) of $\Dd^b(\Gr(k,N))$ are given by  
\begin{align*}
\emph{Im}\Phi_{{\Tt}'\bo_{(k,N-k)}}&= \langle \ \s_{\blam}\V \ \rangle_{0 \leq \lambda_1 \leq N-k-1}, \ 
&\emph{Im}\Phi_{{\Ss}'\bo_{(k,N-k)}}&= \langle \ \s_{\blam}\V \ \rangle_{\lambda_1 =N-k}, \\
\emph{Im}\Phi_{{\Tt}''\bo_{(k,N-k)}}&=\langle \ \s_{\bmu}  (\CC^N/\V)^{\vee}[|\bmu|] \ \rangle_{0 \leq \mu_1 \leq k-1}, \
&\emph{Im}\Phi_{{\Ss}''\bo_{(k,N-k)}}&=\langle \ \s_{\bmu} (\CC^N/\V)^{\vee}[|\bmu|] \ \rangle_{\mu_1=k}. 
\end{align*}
\end{corollary}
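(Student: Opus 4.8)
The plan is to deduce Corollary \ref{componentcat} as a formal consequence of Theorem \ref{actofidem} together with the general structure of complementary idempotents recalled in Section \ref{section 2} (Definition \ref{compleidem}, Example \ref{ssod}). The guiding principle is that, for the pair $({\Tt}'\bo_{(k,N-k)},{\Ss}'\bo_{(k,N-k)})$ (a pair of complementary idempotents by Corollary \ref{mainthmgeo}), the Fourier--Mukai transforms $\Phi_{{\Tt}'\bo_{(k,N-k)}}$ and $\Phi_{{\Ss}'\bo_{(k,N-k)}}$ are the projection endofunctors onto the two components of the semiorthogonal decomposition (\ref{sodk1}); that $\text{Im}\Phi_{{\Tt}'\bo_{(k,N-k)}}$ and $\text{Im}\Phi_{{\Ss}'\bo_{(k,N-k)}}$ are by definition the full triangulated subcategories, closed under summands, generated by their essential images; and that such an essential image is controlled by the images of any generating set of $\Dd^b(\Gr(k,N))$. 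Since the Kapranov collection (\ref{kapexc}) is full (Proposition \ref{exccoll}), it generates $\Dd^b(\Gr(k,N))$, and Theorem \ref{actofidem} records exactly what $\Phi_{{\Tt}'\bo_{(k,N-k)}}$ does to each of its members; the same scheme will then apply to (\ref{sodk2}) with the collection $\{\s_{\bmu}(\CC^N/\V)^{\vee}\}_{\bmu\in P(k,N-k)}$ and $\Phi_{{\Tt}''\bo_{(k,N-k)}}$.

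First I would establish the inclusions in one direction. By Theorem \ref{actofidem}, if $\blam\in P(N-k,k)$ with $0\le\lambda_1\le N-k-1$ then $\Phi_{{\Tt}'\bo_{(k,N-k)}}(\s_{\blam}\V)\cong\s_{\blam}\V$, so $\s_{\blam}\V$ is (isomorphic to) an essential image of $\Phi_{{\Tt}'\bo_{(k,N-k)}}$ and hence lies in $\text{Im}\Phi_{{\Tt}'\bo_{(k,N-k)}}$; as the latter is a full triangulated subcategory closed under summands, $\langle\s_{\blam}\V\rangle_{0\le\lambda_1\le N-k-1}\subseteq\text{Im}\Phi_{{\Tt}'\bo_{(k,N-k)}}$. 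If $\lambda_1=N-k$ then $\Phi_{{\Tt}'\bo_{(k,N-k)}}(\s_{\blam}\V)\cong 0$; applying to $\s_{\blam}\V$ the idempotent triangle ${\Tt}'\bo_{(k,N-k)}\to\Oo_{\Delta}\to{\Ss}'\bo_{(k,N-k)}$ of this pair — turned into a triangle of endofunctors $\Phi_{{\Tt}'\bo_{(k,N-k)}}\to\mathrm{id}_{\Dd^b(\Gr(k,N))}\to\Phi_{{\Ss}'\bo_{(k,N-k)}}$ by exactness of the Fourier--Mukai construction together with Proposition \ref{Proposition 2} (which gives $\Phi_{\Oo_\Delta}\cong\mathrm{id}$) — yields $\s_{\blam}\V\cong\Phi_{{\Ss}'\bo_{(k,N-k)}}(\s_{\blam}\V)$, so $\langle\s_{\blam}\V\rangle_{\lambda_1=N-k}\subseteq\text{Im}\Phi_{{\Ss}'\bo_{(k,N-k)}}$.

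For the reverse inclusions I would first record that, for every $\blam\in P(N-k,k)$, both $\Phi_{{\Tt}'\bo_{(k,N-k)}}(\s_{\blam}\V)$ and $\Phi_{{\Ss}'\bo_{(k,N-k)}}(\s_{\blam}\V)$ lie in the expected subcategories: for $\Phi_{{\Tt}'\bo_{(k,N-k)}}$ this is immediate from Theorem \ref{actofidem} (the value is $\s_{\blam}\V$ with $\lambda_1\le N-k-1$, or $0$); for $\Phi_{{\Ss}'\bo_{(k,N-k)}}$ one uses the vanishing ${\Ss}'\bo_{(k,N-k)}\ast{\Tt}'\bo_{(k,N-k)}\cong 0$ of Definition \ref{compleidem} which, via Proposition \ref{Proposition 2}, gives $\Phi_{{\Ss}'\bo_{(k,N-k)}}\circ\Phi_{{\Tt}'\bo_{(k,N-k)}}\cong 0$, whence $\Phi_{{\Ss}'\bo_{(k,N-k)}}(\s_{\blam}\V)\cong\Phi_{{\Ss}'\bo_{(k,N-k)}}(\Phi_{{\Tt}'\bo_{(k,N-k)}}(\s_{\blam}\V))\cong 0$ when $\lambda_1\le N-k-1$, and $\cong\s_{\blam}\V$ when $\lambda_1=N-k$ by the previous paragraph. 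Now the preimage under the exact endofunctor $\Phi_{{\Tt}'\bo_{(k,N-k)}}$ of the triangulated subcategory $\langle\s_{\blam}\V\rangle_{0\le\lambda_1\le N-k-1}$ is again a full triangulated subcategory closed under summands, and it contains every Kapranov generator, hence equals $\Dd^b(\Gr(k,N))$; therefore $\text{Im}\Phi_{{\Tt}'\bo_{(k,N-k)}}\subseteq\langle\s_{\blam}\V\rangle_{0\le\lambda_1\le N-k-1}$, and the identical preimage argument for $\Phi_{{\Ss}'\bo_{(k,N-k)}}$ gives $\text{Im}\Phi_{{\Ss}'\bo_{(k,N-k)}}\subseteq\langle\s_{\blam}\V\rangle_{\lambda_1=N-k}$. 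Combining the two directions proves the statement for (\ref{sodk1}).

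For (\ref{sodk2}) I would run the same two-sided argument with the collection $\{\s_{\bmu}(\CC^N/\V)^{\vee}\}_{\bmu\in P(k,N-k)}$ and the second halves of Theorem \ref{actofidem} and of Corollary \ref{mainthmgeo}. The one additional point is that this collection still generates $\Dd^b(\Gr(k,N))$: it is the image of the full dual exceptional collection (\ref{kapdualexc}) under the contravariant triangulated autoequivalence $R\mathcal{H}om(-,\Oo_{\Gr(k,N)})$ of $\Dd^b(\Gr(k,N))$ (using $\s_{\bmu}(\CC^N/\V)^{\vee}\cong(\s_{\bmu}(\CC^N/\V))^{\vee}$, since Schur functors commute with duals), after which one forgets the shifts — the triangulated subcategory generated by a family of objects being unchanged under shifting them — which also accounts for the cosmetic $[|\bmu|]$ in the statement. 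I do not expect a serious obstacle here, since Theorem \ref{actofidem} carries the real content; the single place that deserves care is deducing $\Phi_{{\Ss}'\bo_{(k,N-k)}}(\s_{\blam}\V)\cong 0$ for $\lambda_1\le N-k-1$ from the relation ${\Ss}'\bo_{(k,N-k)}\ast{\Tt}'\bo_{(k,N-k)}\cong 0$, rather than by trying to argue directly that the structure map $\Phi_{{\Tt}'\bo_{(k,N-k)}}(\s_{\blam}\V)\to\s_{\blam}\V$ coming from the idempotent triangle is an isomorphism.
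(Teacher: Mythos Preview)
Your proposal is correct and essentially spells out in careful detail what the paper leaves implicit: the paper states Corollary~\ref{componentcat} without proof, treating it as an immediate consequence of Theorem~\ref{actofidem} together with the complementary idempotent structure. Your argument via the preimage trick and the orthogonality ${\Ss}'\ast{\Tt}'\cong 0$ to avoid checking that the counit map $\Phi_{{\Tt}'}(\s_{\blam}\V)\to\s_{\blam}\V$ is an isomorphism (rather than merely an isomorphism of objects) is a clean way to make the deduction rigorous, and your handling of the generation for the second collection via derived duality is also correct.
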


\end{document}